\newtheorem{theorem}{Theorem}
\newtheorem{lemma}[theorem]{Lemma}
\newtheorem{corollary}[theorem]{Corollary}
\newtheorem{proposition}[theorem]{Proposition}
\numberwithin{equation}{section}
\numberwithin{theorem}{section}
\theoremstyle{definition}
\newtheorem{definition}[theorem]{Definition}
\newtheorem*{example*}{Example}
\newtheorem{remark}[theorem]{Remark}
\newtheorem*{remark*}{Remark}
\newcommand{\bF}{{\mathbb F}}
\newcommand{\bZ}{{\mathbb Z}}
\newcommand{\cT}{{\mathcal T}}
\newcommand{\cL}{{\mathcal L}}
\newcommand{\frg}{{\mathfrak g}}
\newcommand{\frf}{{\mathfrak f}}
\newcommand{\calS}{{\mathcal S}}
\newcommand{\calT}{{\mathcal T}}
\newcommand{\calL}{{\mathcal L}}
\newcommand{\subo}{_{\bar 0}}
\newcommand{\subuno}{_{\bar 1}}
\providecommand{\espan}[1]{\text{span}\left\{ #1\right\}}
\providecommand{\ptriple}[1]{\boldsymbol{(}#1\boldsymbol{)}}
 \DeclareMathOperator{\frsl}{{\mathfrak{sl}}}
 \DeclareMathOperator{\frd}{{\mathfrak{d}}}
 \DeclareMathOperator{\trace}{trace}
  \DeclareMathOperator{\Centr}{Centr}
 \DeclareMathOperator{\der}{\mathfrak{der}}
 \DeclareMathOperator{\End}{End}
 \DeclareMathOperator{\Hom}{Hom}
 \DeclareMathOperator{\Mat}{Mat}
 \DeclareMathOperator{\Aut}{Aut}
 \DeclareMathOperator{\Cent}{Cent}
  \DeclareMathOperator{\Dic}{Dic}
\newenvironment{romanenumerate}
 {\begin{enumerate}
 \renewcommand{\itemsep}{4pt}
 }{\end{enumerate}}
\begin{document}

\title[Special Freudenthal-Kantor triple systems]{Special Freudenthal-Kantor triple systems and Lie algebras with dicyclic symmetry}

\author[Alberto Elduque]{Alberto Elduque$^{\star}$}
 \thanks{$^{\star}$ Supported by the Spanish Ministerio de
 Educaci\'{o}n y Ciencia
 and FEDER (MTM 2007-67884-C04-02) and by the
Diputaci\'on General de Arag\'on (Grupo de Investigaci\'on de
\'Algebra)}
 \address{Departamento de Matem\'aticas e
 Instituto Universitario de Matem\'aticas y Aplicaciones,
 Universidad de Zaragoza, 50009 Zaragoza, Spain}
 \email{elduque@unizar.es}

\author[Susumu Okubo]{Susumu Okubo$^{\star\star}$}
 \thanks{$^{\star\star}$ Supported in part by U.S.~Department of Energy Grant No. DE-FG02-91 ER40685.}
 \address{Department of Physics and Astronomy, University of
 Rochester, Rochester, NY 14627, USA}
 \email{okubo@pas.rochester.edu}


\date{April 7, 2010}



\begin{abstract}
Lie algebras endowed with an action by automorphisms of the dicyclic group of degree $3$ are considered. The close connections of these algebras with Lie algebras graded over the nonreduced root system $BC_1$, with $J$-ternary algebras and with Freudenthal-Kantor triple systems are explored.
\end{abstract}

\maketitle

\section{Introduction}

Gradings of Lie algebras by abelian groups constitute a classical subject (see \cite{KochetovSurvey} for a survey of results). In characteristic $0$, any grading by a finitely generated abelian group is given by an action by automorphisms of an abelian group (the group of characters) on the Lie algebra.

Recently the authors have considered actions of some small nonabelian groups by automorphisms on a Lie algebra. More precisely, and motivated by the Principle of Triality that appears related to the classical Lie algebra of type $D_4$, Lie algebras with an action of the symmetric groups $S_3$ and $S_4$ by automorphisms were considered in \cite{EOS4}, \cite{EldTetra}, \cite{EOS4Tits}, \cite{EOS3S4}. It turns out that Lie algebras with an action of $S_4$ by automorphisms are coordinatized by some nonassociative algebras \cite{EOS4}. The unital algebras in this class are precisely the structurable algebras of Allison \cite{All78}. The Lie algebras with an action of $S_3$ by automorphisms are coordinatized by a class of algebras which generalize the Malcev algebras \cite{EOS3S4}. These are some binary-ternary algebras termed \emph{generalized Malcev algebras}. Jordan triple systems are instances of these generalized Malcev algebras.

As proved in \cite[Section 7]{EOS4Tits}, the Lie algebras graded by the nonreduced root system $BC_1$ of type $B_1$ are naturally among the Lie algebras with $S_4$-symmetry (that is, with an action of $S_4$ by automorphisms).

\smallskip

The Lie algebras graded by the nonreduced root system $BC_1$ but of type $C_1$ present another type of symmetry given by the dicyclic group
\[
\Dic_3=\langle \theta,\phi: \theta^4=1=\phi^3,\ \phi\theta\phi=\theta\rangle.
\]

Our aim in this paper is to study those Lie algebras with such a symmetry.

It will be shown that Lie algebras with $\Dic_3$-symmetry are strongly related to the $BC_1$-graded Lie algebras of type $C_1$, to the class of $J$-ternary algebras introduced in \cite{Allison76}, and to the Freudenthal-Kantor triple systems defined in \cite{YO84}. Moreover, these Lie algebras are coordinatized by elements in a class of nonassociative algebraic systems consisting of binary-ternary algebras endowed with an involutive automorphism which will be introduced here and named \emph{dicyclic ternary algebras}.

\smallskip

In Section \ref{se:BC1C1Jternary} the Lie algebras graded over the nonreduced root system $BC_1$ of type $C_1$ will be reviewed, as well as their connections with $J$-ternary algebras. These $J$-ternary algebras are closely connected with a subclass of the Freudenthal-Kantor triple systems introduced by Yamaguti and Ono in \cite{YO84}. This subclass will be defined in Section \ref{se:JternarySpecialFKTS} and has its own independent interest.

Most of the arguments can be superized and a class of \emph{strictly $BC_1$-graded Lie superalgebras of type $C_1$} will be considered in Section \ref{se:super}, as well as the associated $J$-ternary $(-1)$-algebras and $(-1,-1)$ Freudenthal-Kantor triple systems.

Section \ref{se:dicyclic} will introduce the \emph{dicyclic ternary algebras}, which coordinatize the Lie algebras with dicyclic symmetry. The dicyclic ternary algebras attached to $J$-ternary algebras will be the object of study of Section \ref{se:dicyclicJternary}. Finally, Section \ref{se:edFKTSdicyclic} will be devoted to show that the $5$-graded Lie algebra (or superalgebra)  attached to any $(\epsilon,\delta)$ Freudenthal-Kantor triple system (not necessarily special) is always endowed with an action of $\Dic_3$ by automorphisms. For $\delta=-\epsilon$ this action reduces to an action of the symmetric group $S_3$. For $\epsilon=\delta=1$ we get Lie algebras with $\Dic_3$-symmetry whose attached dicyclic ternary algebra is computed.

\smallskip

Throughout the paper all the algebraic systems considered will be defined over a ground field $\bF$ whose characteristic will always be assumed to be $\ne 2,3$.

\medskip

\section{$BC_1$-graded Lie algebras of type $C_1$\\ and $J$-ternary algebras}\label{se:BC1C1Jternary}

The purpose of this section is to review the relationship between Lie algebras graded over the non reduced root system $BC_1$ of type $C_1$, and a class of nonassociative algebraic systems named $J$-ternary algebras. This goes back to \cite{Allison76} (see \cite{Hein75} and \cite{Kan73} for closely related work). It must be mentioned that the Lie algebras graded by the root system $BC_1$ (of any type) are characterized in \cite{BenkartSmirnov03}, which completes the work in \cite{AllisonBenkartGao02},

The proof given here is different from the one in \cite{Allison76}.

\smallskip

Let $\frg$ be a Lie algebra over  $\bF$ containing a subalgebra isomorphic to $\frsl(V)$, for a two-dimensional vector space $V$ over $\bF$ (that is, a subalgebra isomorphic to $\frsl_2(\bF)$), and such that, as a module for $\frsl(V)$, the Lie algebra $\frg$ is a direct sum of modules which are either trivial, or isomorphic to $V$, or to the adjoint module $\frsl(V)$. These are precisely the required conditions for $\frg$ to be a $BC_1$-graded Lie algebra of type $C_1$.

Then, as a module for $\frsl(V)$, we may write:
\begin{equation}\label{eq:gslVVd}
\frg=\bigl(\frsl(V)\otimes J\bigr)\oplus\bigl(V\otimes T\bigr)\oplus\frd,
\end{equation}
where $\frd$ s the sum of the trivial modules (this is the centralizer in $\frg$ of the subalgebra $\frsl(V)=\frsl(V)\otimes 1$, and hence it is a subalgebra), and $J$ and $T$ are vector spaces, with $J$ containing a distinguished element $1$, such that $\frsl(V)\otimes 1$ is the subalgebra $\frsl(V)$ we have started with.

\smallskip

In the following lemma we collect some well-known results. Unadorned tensor products are always considered over the ground field $\bF$.

\begin{lemma}\label{le:invariantV}
Let $V$ be a two-dimensional vector space.
\begin{romanenumerate}
\item
The skew symmetric map $\frsl(V)\otimes\frsl(V)\rightarrow \frsl(V)$, $f\otimes g\mapsto [f,g]=fg-gf$, spans the space of $\frsl(V)$-invariant maps $\Hom_{\frsl(V)}(\frsl(V)\otimes\frsl(V),\frsl(V))$.

\item The symmetric map $\frsl(V)\otimes \frsl(V)\rightarrow \bF$, $f\otimes g\mapsto \trace(fg)$, spans the space of $\frsl(V)$-invariant maps $\Hom_{\frsl(V)}(\frsl(V)\otimes\frsl(V),\bF)$.

\item For any $f,g,h\in \frsl(V)$, one has
\begin{equation}\label{eq:[[fg]h]}
[[f,g],h]=2\bigl(\trace(gh)f-\trace(fh)g\bigr).
\end{equation}

\item The map $\frsl(V)\otimes V\rightarrow V$, $f\otimes u\mapsto f(u)$, spans the space of $\frsl(V)$-invariant maps $\Hom_{\frsl(V)}(\frsl(V)\otimes V,V)$.

\item The space $\Hom_{\frsl(V)}(V\otimes V,\bF)$ of $\frsl(V)$-invariant maps from $V\otimes V$ into the trivial module $\bF$ is one-dimensional and spanned by a nonzero skew symmetric bilinear map $\bigl(u\vert v\bigr)$.

\item The symmetric map $V\otimes V\rightarrow \frsl(V)$, $u\otimes v\mapsto \gamma_{u,v}=\bigl(u\vert .\bigr)v+\bigl(v\vert .\bigr)u$ ($(.\vert.)$ as in \textup{(v)}), spans the space of $\frsl(V)$-invariant maps $\Hom_{\frsl(V)}(V\otimes V,\frsl(V))$.

\item There are no nontrivial $\frsl(V)$-invariant maps $\frsl(V)\otimes\frsl(V)\rightarrow V$ and $V\otimes V\rightarrow V$. That is, $\Hom_{\frsl(V)}(\frsl(V)\otimes \frsl(V),V)=0=\Hom_{\frsl(V)}(V\otimes V,V)$. \qed
\end{romanenumerate}
\end{lemma}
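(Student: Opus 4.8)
The plan is to reduce everything to classical $\frsl_2$-representation theory, using the fact that over a field of characteristic $\ne 2,3$ the relevant modules $\bF$ (trivial), $V$ (natural, $2$-dimensional), and $\frsl(V)\cong\mathrm{Sym}^2 V$ (adjoint, $3$-dimensional) are irreducible, and that $\Hom_{\frsl(V)}(M,N)$ for irreducibles $M,N$ is $\bF$ if $M\cong N$ and $0$ otherwise (Schur). Throughout I will use the self-duality of $V$ afforded by the symplectic form $(.\vert.)$ of part (v): this identifies $V\otimes V$ with $V\otimes V^*\cong\End(V)$, and the latter decomposes as $\bF\cdot\mathrm{id}\oplus\frsl(V)$. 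Similarly $\frsl(V)\otimes\frsl(V)\cong(\mathrm{Sym}^2V)^{\otimes 2}$ decomposes by Clebsch--Gordan as $\mathrm{Sym}^4V\oplus\mathrm{Sym}^2V\oplus\mathrm{Sym}^0V$, i.e.\ a $5$-dimensional irreducible plus a copy of $\frsl(V)$ plus a copy of $\bF$; and $\frsl(V)\otimes V\cong\mathrm{Sym}^2V\otimes V\cong\mathrm{Sym}^3V\oplus V$, a $4$-dimensional irreducible plus a copy of $V$.

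**The individual parts.** Parts (i), (ii), (iv), (vi): each asserts that a certain explicit nonzero $\frsl(V)$-equivariant map exists and spans the corresponding $\Hom$-space; by the decompositions above each such $\Hom$-space is at most $1$-dimensional, so I only need to check that the exhibited map is nonzero and equivariant. Equivariance of $[\,,\,]$, $\trace(fg)$, and $f\otimes u\mapsto f(u)$ is automatic since these are built from the module structure and the invariant trace form; equivariance of $\gamma_{u,v}$ follows because $(.\vert.)$ is $\frsl(V)$-invariant (part (v)). Nonvanishing is immediate by evaluating on a standard basis $e,f,h$ of $\frsl(V)$ or $\{u_1,u_2\}$ of $V$: e.g.\ $[e,f]=h\ne0$, $\trace(ef)=1\ne0$, $\gamma_{u_1,u_1}\ne0$. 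For part (v), $\Hom_{\frsl(V)}(V\otimes V,\bF)=\Hom_{\frsl(V)}(V,V^*)$, which is $1$-dimensional by Schur; the generator is skew because $V\otimes V=\mathrm{Sym}^2V\oplus\Lambda^2V$ and only the $1$-dimensional piece $\Lambda^2V$ is trivial, forcing any invariant bilinear form on $V$ to be alternating. Part (vii): $\Hom_{\frsl(V)}(\frsl(V)\otimes\frsl(V),V)=0$ and $\Hom_{\frsl(V)}(V\otimes V,V)=0$ because $V\cong\mathrm{Sym}^1V$ does not appear in either Clebsch--Gordan decomposition $\mathrm{Sym}^4\oplus\mathrm{Sym}^2\oplus\mathrm{Sym}^0$ or $\mathrm{Sym}^2\oplus\mathrm{Sym}^0$ (parity: a tensor square of integer-highest-weight modules contains only even-highest-weight constituents).

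**Part (iii), the identity.** This is the one computational statement rather than a dimension count, so it is the natural place to spend effort. The left-hand side $(f,g,h)\mapsto[[f,g],h]$ is an element of $\Hom_{\frsl(V)}(\frsl(V)^{\otimes 3},\frsl(V))$; by part (i) (and Schur on the middle factor) the subspace of such maps that are skew in $f,g$ is at most $1$-dimensional, spanned by $(f,g,h)\mapsto\trace(gh)f-\trace(fh)g$. Hence $[[f,g],h]=\lambda\bigl(\trace(gh)f-\trace(fh)g\bigr)$ for a universal scalar $\lambda$, and it remains to pin down $\lambda=2$ by evaluating on one convenient triple: with $h,e$ the standard elements ($[h,e]=2e$, $\trace(he)=0$, $\trace(h^2)=2$), compute $[[e,h],h]=[-2e,h]=4e$ while $\trace(h\cdot h)e-\trace(e\cdot h)h=2e$, giving $\lambda=2$. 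Alternatively one just expands $[[f,g],h]$ directly using $\frsl_2$ matrices — a three-line check — and this is arguably cleaner; either way there is no real obstacle.

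**Expected main obstacle.** There is no deep difficulty: the only things to be careful about are (a) confirming irreducibility of $V$ and $\frsl(V)$ and the Clebsch--Gordan rule in characteristic $\ne 2,3$ (the restriction $\charac\bF\ne 2,3$ is exactly what guarantees the small-dimensional modules here stay irreducible and the trace form stays nondegenerate), and (b) getting the normalization constant in (iii) right, including the coefficient $2$, which is sensitive to one's conventions for $(.\vert.)$ and for $\gamma_{u,v}$. So the ``hard part'' is merely bookkeeping of constants; the structural content is entirely Schur's lemma plus the two Clebsch--Gordan decompositions.
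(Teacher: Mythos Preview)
Your proposal is correct. The paper itself treats parts (i), (ii), (iv)--(vii) as well-known and gives no argument (the lemma ends with a \qed), so there is nothing to compare there; your Clebsch--Gordan/Schur approach is a standard and valid way to fill in those details.

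The only part the paper actually argues is (iii), in the paragraph immediately following the lemma, and there the route differs from your primary one. Rather than invoking representation theory to deduce that the two sides must be proportional and then evaluating on a single triple, the paper uses the Cayley--Hamilton-type identity $f^2=\tfrac12\trace(f^2)1$ valid for traceless $2\times 2$ matrices, polarizes to $fg+gf=\trace(fg)1$, obtains $fgh+hgf=\trace(fg)h+\trace(gh)f-\trace(fh)g$, and then writes $[[f,g],h]=(fgh+hgf)-(gfh+hfg)$ to conclude. This is essentially the ``three-line check'' you mention as your alternative; it is pure linear algebra and does not depend on part (i), whereas your main argument for (iii) does (via the identification $\Lambda^2\frsl(V)\cong\frsl(V)$ and the one-dimensionality in (i)). Both approaches are perfectly fine; the paper's is slightly more self-contained, yours slightly more conceptual.
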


Note that for any $f\in\frsl(V)$, $f^2=\frac{1}{2}\trace(f^2)1$, and hence $fg+gf=\trace(fg)1$ for any $f,g\in\frsl(V)$. This gives $fgf=\trace(fg)f-\frac{1}{2}\trace(f^2)g$, so $fgh+hgf=\trace(fg)h+\trace(gh)f-\trace(fh)g$, and finally $[[f,g],h]=(fgh+hgf)-(gfh+hfg)=2\bigl(\trace(gh)f-\trace(fh)g\bigr)$ for any $f,g\in \frsl(V)$, thus getting \eqref{eq:[[fg]h]}.

\medskip

From now on, we will fix a nonzero skew symmetric bilinear form $(.\vert .)$ on our two-dimensional vector space $V$.

The $\frsl(V)$-invariance of the Lie bracket in our Lie algebra $\frg$ in \eqref{eq:gslVVd} gives, for any $f,g\in\frsl(V)$, $u,v\in V$ and $\varphi\in \frd$, the following conditions:
\begin{equation}\label{eq:g[]}
\begin{split}
[f\otimes a,g\otimes b]&=[f,g]\otimes a\cdot b+2\trace(fg)D_{a,b},\\
[f\otimes a,u\otimes x]&=f(u)\otimes a\bullet x,\\
[u\otimes x,v\otimes y]&=\gamma_{u,v}\otimes \langle x\vert y\rangle +\bigl(u\vert v\bigr)d_{x,y},\\
[\varphi,f\otimes a]&=f\otimes \varphi(a),\\
[\varphi,u\otimes x]&=u\otimes \varphi(x),
\end{split}
\end{equation}
for suitable bilinear maps
\begin{equation}\label{eq:maps}
\begin{split}
J\times J\rightarrow J,&\quad (a,b)\mapsto a\cdot b,\quad\text{(symmetric),}\\
J\times J\rightarrow \frd,&\quad (a,b)\mapsto D_{a,b},\quad\text{(skew symmetric),}\\
J\times T\rightarrow T,&\quad (a,x)\mapsto a\bullet x,\\
T\times T\rightarrow J,&\quad (x,y)\mapsto \langle x\vert y\rangle\quad\text{(skew symmetric),}\\
T\times T\rightarrow \frd,&\quad (x,y)\mapsto d_{x,y}\quad\text{(symmetric),}\\
\frd\times J\rightarrow J,&\quad (\varphi,a)\mapsto \varphi(a),\\
\frd\times T\rightarrow T,&\quad (\varphi,x)\mapsto \varphi(x),
\end{split}
\end{equation}
such that $1\cdot a=a$, $D_{1,a}=0$ and $1\bullet x=x$ for any $a\in J$ and $x\in T$.

The Jacobi identity on $\frg$ shows that all these maps are invariant under the action of the Lie subalgebra $\frd$. It also gives some other conditions to be satisfied by these maps:

\begin{itemize}
\addtolength{\itemindent}{-20pt}
\addtolength{\itemsep}{8pt}
\item For any $f,g,h\in \frsl(V)$ and $a,b,c\in J$ we have,
\[
\begin{split}
[[f\otimes a,g\otimes b],h\otimes c]
&=[[f,g],h]\otimes (a\cdot b)\cdot c\\
&\qquad+2\trace(fg)h\otimes D_{a,b}(c)+2\trace([f,g]h)D_{a\cdot b,c}
\end{split}
\]
For linearly independent $f,g,h$ (generic case), equation \eqref{eq:[[fg]h]} implies that $\sum_{\text{cyclic}}[[f\otimes a,g\otimes b],h\otimes c]$ equals $0$ if and only if the following two conditions hold:
\begin{equation}\label{eq:Ds}
\begin{split}
&D_{a\cdot b,c}+D_{b.c,a}+D_{c\cdot a,b}=0,\\
&D_{a,b}(c)=a\cdot(b\cdot c)-b\cdot(a\cdot c ),
\end{split}
\end{equation}
for any $a,b,c\in J$. Since the characteristic is $\ne 3$, the first equation in \eqref{eq:Ds} is equivalent to $D_{a\cdot a,a}=0$, which in view of the second equation is equivalent to:
\[
(a\cdot a)\cdot (b\cdot a)=((a\cdot a)\cdot b)\cdot a
\]
for any $a,b\in J$. That is, $J$ is a Jordan algebra.

This goes back to \cite{Tits62}, where the case in which $T=0$ is considered and the well-known Tits-Kantor-Koecher construction of Lie algebras out of Jordan algebras appeared for the first time (see also \cite{Jac67}).

Therefore the Jacobi identity for elements in $\frsl(V)\otimes J$ is equivalent to $(J,\cdot)$ being a Jordan algebra and to the condition $D_{a,b}(c)=a\cdot(b\cdot c)-b\cdot(a\cdot c)$ for any $a,b,c\in J$ (that is, $D_{a,b}$ acts as the natural inner derivation of $J$ attached to the elements $a,b\in J$ \cite{Jac67}).

\item For any $f,g\in\frsl(V)$, $a,b\in J$, $w\in V$ and $x\in T$ we get:
\[
\begin{split}
[[f\otimes a,g\otimes b],w\otimes x]&=[[f,g]\otimes a\cdot b +2\trace(fg)D_{a,b},w\otimes x]\\
&=[f,g](w)\otimes (a\cdot b)\bullet x +2\trace(fg)w\otimes D_{a,b}(x),\\[6pt]
[[f\otimes a,w\otimes x],g\otimes b]&=
[f(w)\otimes a\bullet x,g\otimes b]=-g(f(w))\otimes b\bullet(a\bullet x),\\
[f\otimes a,[g\otimes b,w\otimes x]]&=[f\otimes a,g(w)\otimes b\bullet x]=f(g(w))\otimes a\bullet(b\bullet x).
\end{split}
\]
With $f=g$, and since $f^2=\frac{1}{2}\trace(f^2)1$, Jacobi identity gives:
\begin{equation}\label{eq:Dabx}
4D_{a,b}(x)=a\bullet (b\bullet x)-b\bullet(a\bullet x).
\end{equation}
Now, using this fact and since $fg+gf=\trace(fg)1$, with $f,g\in\frsl(V)$ and $w\in W$ such that $f(g(w))$ and $g(f(w))$ are linearly independent (generic case), we get
\begin{equation}\label{eq:abx}
(a\cdot b)\bullet x=\frac{1}{2}\Bigl(a\bullet(b\bullet x)+b\bullet(a\bullet x)\Bigr).
\end{equation}
That is, the map $J\rightarrow \End_\bF(T)$, $a\mapsto \lambda_a$, with $\lambda_a(x)= a\bullet x$, is a homomorphism of Jordan algebra, where $\End_\bF(T)$ is a Jordan algebra under the product $\alpha\circ\beta=\frac{1}{2}\bigl(\alpha\beta+\beta\alpha)$. This (special) Jordan algebra is denoted by $\End_{\bF}(T)^+$. In other words, $T$ is a \emph{special unital Jordan module} for $J$.

Conversely, equations \eqref{eq:Dabx} and \eqref{eq:abx} imply the Jacobi identity for the elements above.

\item For any $f\in\frsl(V)$, $a\in J$, $u,v\in V$, and $x,y\in T$, we get:
\[
\begin{split}
[f\otimes a,[u\otimes x,v\otimes y]]&=[f\otimes a,\gamma_{u,v}\otimes \langle x\vert y\rangle +(u\vert v)d_{x,y}]\\
&=[f,\gamma_{u,v}]\otimes a\cdot \langle x\vert y\rangle -(u\vert v)f\otimes d_{x,y}(a)\\
&\qquad\qquad +2\trace(f\gamma_{u,v})D_{a,\langle x\vert y\rangle},\\[6pt]
[[f\otimes a,u\otimes x],v\otimes y]&=[f(u)\otimes a\bullet x,v\otimes y]\\
&=\gamma_{f(u),v}\otimes \langle a\bullet x\vert y\rangle +(f(u)\vert v)d_{a\bullet x,y},\\
[u\otimes x,[f\otimes a,v\otimes y]]&=[u\otimes x,f(v)\otimes a\bullet y]\\
& =\gamma_{u,f(v)}\otimes \langle x\vert a\bullet y\rangle +(u\vert f(v))d_{x,a\bullet y}.
\end{split}
\]
But $\trace(f\gamma_{u,v})=\trace\bigl(f\bigl((u\vert .)v+(v\vert .)u\bigr)\bigr)=(u\vert f(v))+(v\vert f(u))=-2\bigl(f(u)\vert v)=2(u\vert f(v))$, so the component in $\frd$ of the above equations give:
\begin{equation}\label{eq:Daxy}
4D_{a,\langle x\vert y\rangle}=-d_{a\bullet x,y}+d_{x,a\bullet y},
\end{equation}
for any $a\in J$ and $x,y\in T$.

Also $[f,\gamma_{u,v}]=\gamma_{f(u),v}+\gamma_{u,f(v)}$. Hence with $u=v$, $[f,\gamma_{u,u}]=2\gamma_{f(u),u}=2\gamma_{u,f(u)}$, and we get
\begin{equation}\label{eq:axy}
2a\cdot\langle x\vert y\rangle=\langle a\bullet x\vert y\rangle +\langle x\vert a\bullet y\rangle,
\end{equation}
for any $a\in J$ and $x,y\in T$.

Since the dimension of $V$ is $2$, the skew symmetry forces
\[
\sum_{\text{cyclic}}(w_1\vert w_2)w_3=0.
\]
Hence it follows that for any $u,v,w\in V$, $(u\vert v)f(w)+(f(w)\vert u)v+(v\vert f(w))u=0$, or
\begin{equation}\label{eq:uvf}
(u\vert v)f=-(f(u)\vert .)v+(f(v)\vert .)u,
\end{equation}
for any $u,v\in V$ and $f\in\frsl(V)$. Now, take $u$ and $v$ linearly independent in $V$, and take $f=\gamma_{u,v}$. Then $[f,\gamma_{u,v}]=0$, $\gamma_{f(u),v}=-(u\vert v)f=-\gamma_{u,f(v)}$, so we obtain:
\begin{equation}\label{eq:dxya}
d_{x,y}(a)=\langle a\bullet x\vert y\rangle -\langle x\vert a\bullet y\rangle.
\end{equation}

Conversely, equations \eqref{eq:Daxy}, \eqref{eq:axy} and \eqref{eq:dxya} imply the Jacobi identity for the elements above.

\item Finally, for $u,v,w\in V$ and $x,y,z\in T$,
\[
\begin{split}
[[u\otimes x,v\otimes y],w\otimes z]&=[\gamma_{u,v}\otimes \langle x\vert y\rangle +(u\vert v)d_{x,y},w\otimes z]\\
&=\gamma_{u,v}(w)\otimes \langle x\vert y\rangle\bullet z +(u\vert v)w\otimes d_{x,y}(z).
\end{split}
\]
With $u$ and $v$ such that $(u\vert v)=1$ and $w=u$, $\gamma_{u,v}(u)=-u=\gamma_{v,u}(u)$, $\gamma_{u,u}(v)=2v$ and the Jacobi identity gives:
\begin{equation}\label{eq:dxyzdzyx}
d_{x,y}(z)-d_{z,y}(x)=\langle x\vert y\rangle\bullet z-\langle z\vert y\rangle \bullet x+2\langle x\vert z\rangle \bullet y
\end{equation}
for any $x,y,z\in T$. And conversely, this equation is sufficient to obtain the validity of the Jacobi identity for elements in $V\otimes T$.
\end{itemize}

\medskip

Let us collect the information obtained so far in the following result:

\begin{theorem}\label{th:gBC1}
Let $\frg$ be a Lie algebra. Then $\frg$ is $BC_1$-graded of type $C_1$ if and only if there is a two-dimensional vector space $V$ such that that $\frg$ is, up to isomorphism, the Lie algebra in \eqref{eq:gslVVd}, with Lie bracket given in \eqref{eq:g[]}, for a unital Jordan algebra $(J,\cdot)$, a special unital Jordan module $T$ for $J$ (with action denoted by $a\bullet x$ for $a\in J$ and $x\in T$), and a Lie subalgebra $\frd$ of $\frg$, endowed with $\frd$-invariant bilinear maps:
\[
\begin{split}
J\times J\rightarrow \frd,&\quad (a,b)\mapsto D_{a,b}\quad\text{(skew symmetric),}\\
T\times T\rightarrow J,&\quad (x,y)\mapsto \langle x\vert y\rangle\quad\text{(skew symmetric),}\\
T\times T\rightarrow \frd,&\quad (x,y)\mapsto d_{x,y}\quad\text{(symmetric),}\\
\frd\times J\rightarrow J,&\quad (d,a)\mapsto d(a),\\
\frd\times T\rightarrow T,&\quad (d,x)\mapsto d(x),
\end{split}
\]
satisfying equations \eqref{eq:Ds}, \eqref{eq:Dabx}, \eqref{eq:Daxy}, \eqref{eq:axy}, \eqref{eq:dxya}, and \eqref{eq:dxyzdzyx}, and such that for any $d\in \frd$, the map $J\oplus T\rightarrow J\oplus T$, $a+x\mapsto d(a)+d(x)$ is a derivation of the algebra $J\oplus T$, with the product given by the formula:
\begin{equation}\label{eq:J+T}
(a+x)\diamond(b+y)=\bigl(a\cdot b+\langle x\vert y\rangle\bigr)+\bigl(a\bullet y+b\bullet x\bigr). \qed
\end{equation}
\end{theorem}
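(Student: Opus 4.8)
The statement is an equivalence, and the forward implication (``only if'') has in effect already been carried out in the computation preceding the theorem; the plan is to assemble that and then supply the reverse implication. For ``$\Rightarrow$'': if $\frg$ is $BC_1$-graded of type $C_1$ then, by definition, $\frg$ is as an $\frsl(V)$-module a direct sum of trivial modules, of copies of $V$, and of copies of the adjoint module; collecting the isotypic components into multiplicity spaces $\frd$, $T$, $J$ respectively, and choosing $1\in J$ so that the corresponding copy $\frsl(V)\otimes 1$ is the distinguished $\frsl_2(\bF)$ we started with, gives the decomposition \eqref{eq:gslVVd} with $\frd$ the centralizer of $\frsl(V)\otimes 1$. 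Since $\frsl(V)\otimes J$ and $V\otimes T$ contain no trivial $\frsl(V)$-submodule this also forces $1\cdot a=a$, $D_{1,a}=0$ and $1\bullet x=x$. By Lemma \ref{le:invariantV} the $\frsl(V)$-equivariant Lie bracket must have exactly the shape \eqref{eq:g[]} for maps as in \eqref{eq:maps}, and the case analysis of the Jacobi identity already performed --- on triples inside $\frsl(V)\otimes J$, on the two types of mixed triples, and on triples inside $V\otimes T$ --- shows that $(J,\cdot)$ is a unital Jordan algebra, $T$ a special unital Jordan module, the five bilinear maps $\frd$-invariant, that equations \eqref{eq:Ds}, \eqref{eq:Dabx}, \eqref{eq:Daxy}, \eqref{eq:axy}, \eqref{eq:dxya}, \eqref{eq:dxyzdzyx} hold, and that each $d\in\frd$ is a derivation of $(J\oplus T,\diamond)$ (the product of \eqref{eq:J+T}).

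For ``$\Leftarrow$'' I would take the given data, define $\frg$ by \eqref{eq:gslVVd} with the bracket \eqref{eq:g[]} (extended by skew-symmetry, and using the bracket of $\frd$, where \eqref{eq:g[]} does not directly apply), and verify the Lie algebra axioms. Skew-symmetry is immediate from the symmetry/skew-symmetry patterns of the maps in \eqref{eq:maps}. For the Jacobi identity I would classify triples of arguments by how many of them lie in $\frsl(V)\otimes J$, in $V\otimes T$, and in $\frd$. The triples with no entry in $\frd$ are exactly the four families treated above, and for each of them the ``conversely'' remarks proved there say that the relations in the theorem are sufficient; the only thing to add is the genericity observation that, since the Jacobiator is multilinear and $\frsl(V)$-invariant, it suffices to verify it on the special tuples used there ($f,g,h$ linearly independent; $f,g,w$ with $f(g(w)),g(f(w))$ linearly independent; $u,v$ with $(u\vert v)=1$), the remaining tuples following by multilinearity together with the identities $f^2=\frac{1}{2}\trace(f^2)1$, $fg+gf=\trace(fg)1$ and $\sum_{\text{cyclic}}(w_1\vert w_2)w_3=0$ (cf.~\eqref{eq:[[fg]h]}, \eqref{eq:uvf}).

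It remains to treat the triples with at least one entry in $\frd$. When exactly one argument lies in $\frd$, expanding the Jacobi identity --- exactly as in the sample identity $[\varphi,[f\otimes a,g\otimes b]]=[[\varphi,f\otimes a],g\otimes b]+[f\otimes a,[\varphi,g\otimes b]]$ and its analogues involving entries of $V\otimes T$ --- reproduces precisely the $\frd$-invariance of $\cdot$, $D$, $\bullet$, $\langle\cdot\vert\cdot\rangle$ and $d$, that is, the hypotheses that these maps are $\frd$-invariant and that each $d\in\frd$ acts as a derivation of $(J\oplus T,\diamond)$. When two or three arguments lie in $\frd$, the Jacobi identity amounts to the Jacobi identity of $\frd$ together with $J$ and $T$ being $\frd$-modules, which is part of the data. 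Finally, the Lie algebra $\frg$ so constructed contains $\frsl(V)\otimes 1\cong\frsl_2(\bF)$ and is, by construction, a direct sum of $\frsl(V)$-modules each of which is trivial, isomorphic to $V$, or isomorphic to the adjoint module; hence it is $BC_1$-graded of type $C_1$, and together with ``$\Rightarrow$'' this yields the ``up to isomorphism'' assertion. I expect the main difficulty to be organisational rather than conceptual: the real labour is the Jacobi identity in the $\frd$-free cases, and the one point that needs genuine care is the genericity reduction --- one must check that the special-tuple identities are equivalent to, and not merely necessary for, the full Jacobi identity, so that the ``conversely'' claims used above are indeed justified.
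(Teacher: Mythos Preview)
Your proposal is correct and follows essentially the same approach as the paper: the theorem is stated with a \qed\ because it is meant as a summary of the preceding case-by-case analysis of the Jacobi identity, together with the ``conversely'' remarks made after each case, and the note following the theorem that the $\frd$-invariance of the maps (equivalently, $\frd$ acting by derivations of $(J\oplus T,\diamond)$) is precisely what the Jacobi identity on triples involving $\frd$ amounts to. Your explicit organisation of the converse direction---classifying triples by how many arguments lie in $\frd$ and invoking the ``conversely'' statements for the $\frd$-free cases---is exactly the intended reading, and your attention to the genericity reduction is appropriate (the paper handles it implicitly by tagging the special tuples as ``generic case'' and relying on multilinearity).
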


\medskip

Note that \eqref{eq:J+T} above is a consequence of the bilinear maps involved being invariant under the action of $\frd$, this being forced by the Jacobi identity on $\frg$.

\medskip

All this is strongly related to the $J$-ternary algebras considered by Allison \cite{Allison76}:

\begin{definition} (see \cite[(3.12)]{AllisonBenkartGao02}) Let $J$ be a unital Jordan algebra with multiplication $a\cdot b$. Let $T$ be a unital special Jordan module for $J$ with action $a\bullet x$ for $a\in J$ and $x\in T$. Assume $\langle.\vert.\rangle:T\times T\rightarrow J$ is a skew symmetric bilinear map and $\ptriple{.,.,.}:T\times T\times T\rightarrow T$ is a trilinear product on $T$. Then the pair $(J,T)$ is called a \emph{$J$-ternary algebra} if the following axioms hold for any $a\in J$ and $x,y,z,w,v\in T$:
\begin{description}
\addtolength{\itemindent}{-22pt}
\item[(JT1)] $a\cdot\langle x\vert y\rangle =\dfrac{1}{2}\Bigl(\langle a\bullet x\vert y\rangle +\langle x\vert a\bullet y\rangle\Bigr)$,

\item[(JT2)] $a\bullet\ptriple{ x,y,z}=\ptriple{ a\bullet x,y,z}-\ptriple{ x,a\bullet y,z} +\ptriple{x,y,a\bullet z}$,

\item[(JT3)] $\ptriple{x,y,z}=\ptriple{z,y,x}-\langle x\vert z\rangle\bullet y$,

\item[(JT4)] $\ptriple{x,y,z}=\ptriple{y,x,z}+\langle x\vert y\rangle\bullet z$,

\item[(JT5)] $\langle\ptriple{x,y,z}\vert w\rangle+\langle z\vert\ptriple{x,y,w}\rangle =\langle x\vert\langle z\vert w\rangle\bullet y\rangle$,

\item[(JT6)] $\ptriple{x,y,\ptriple{z,w,v}}=\ptriple{\ptriple{x,y,z},w,v}+
    \ptriple{z,\ptriple{y,x,w},v}+\ptriple{z,w,\ptriple{x,y,w}}$.
\end{description}
\end{definition}

\medskip

\begin{theorem}\label{th:JT}
Let $(J,\cdot)$ be a unital Jordan algebra, let $(T,\bullet)$ be a special unital Jordan module for $(J,\cdot)$, and let $\frd$ be a Lie algebra acting on both vector spaces $J$ and $T$ (that is, both $J$ and $T$ are modules for $\frd$). Assume $D_{.,.}:J\times J\rightarrow\frd$, $(a,b)\mapsto D_{a,b}$, and $\langle .\vert .\rangle:T\times T\rightarrow J$, $(x,y)\mapsto \langle x\vert y\rangle$, are two skew symmetric $\frd$-invariant bilinear maps, $d_{.,.}:T\times T\rightarrow \frd $, $(x,y)\mapsto d_{x,y}$ is a symmetric $\frd$-invariant map, and the action of $\frd$ on the direct sum $J\oplus T$ is an action by derivations of the algebra $(J\oplus T,\diamond)$ in \eqref{eq:J+T}. Then if \eqref{eq:Ds}, \eqref{eq:Dabx}, \eqref{eq:Daxy}, \eqref{eq:axy}, \eqref{eq:dxya} and \eqref{eq:dxyzdzyx} are satisfied, the pair $(J,T)$ becomes a $J$-ternary algebra with the triple product $\ptriple{.,.,.}:T\times T\times T\rightarrow T$ given by
\begin{equation}\label{eq:(xyz)ds}
\ptriple{x,y,z}=\frac{1}{2}\Bigl(-d_{x,y}(z)+\langle x\vert y\rangle\bullet z\Bigr).
\end{equation}

Conversely, if $(J,T)$ is a $J$-ternary algebra, then the image of the bilinear maps
\[
D_{.,.}:J\times J\rightarrow \End_{\bF}(J\oplus T),\ (a,b)\mapsto D_{a,b},
\]
where
\begin{equation}\label{eq:defD}
\begin{split}
D_{a,b}(c)&=a\cdot(b\cdot c)-b\cdot(a\cdot c),\\
D_{a,b}(x)&=\frac{1}{4}\Bigl(a\bullet(b\bullet x)-b\bullet (a\bullet x)\Bigr),
\end{split}
\end{equation}
for $a,b,c\in J$ and $x\in T$, and
\[
d_{.,.}:T\times T\rightarrow \End_{\bF}(J\oplus T),\ (x,y)\mapsto d_{x,y},
\]
where
\begin{equation}\label{eq:defd}
\begin{split}
d_{x,y}(a)&=\langle a\bullet x\vert y\rangle -\langle x\vert a\bullet y\rangle,\\
d_{x,y}(z)&=\langle x\vert y\rangle\bullet z-2\ptriple{x,y,z},
\end{split}
\end{equation}
for $a\in J$ and $x,y,z\in T$, is contained in the Lie algebra of derivations of the algebra $(J\oplus T,\diamond)$ defined in \eqref{eq:J+T}, and the equations \eqref{eq:Ds}, \eqref{eq:Daxy}, \eqref{eq:axy} and \eqref{eq:dxyzdzyx} are satisfied.
\end{theorem}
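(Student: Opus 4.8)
The plan is to treat both directions as a translation between the two axiom systems, pivoting on the identity $d_{x,y}(z)=\langle x\vert y\rangle\bullet z-2\ptriple{x,y,z}$, i.e.\ \eqref{eq:(xyz)ds}, which lets one pass back and forth between the triple product and the operator $d_{x,y}$. For the \emph{if} direction I would define $\ptriple{.,.,.}$ by \eqref{eq:(xyz)ds} and dispatch the first axioms one at a time: (JT1) is \eqref{eq:axy}; (JT4) is forced by the symmetry of $d_{.,.}$ and the skew symmetry of $\langle .\vert .\rangle$; and substituting \eqref{eq:(xyz)ds} into (JT3) turns it, after rearrangement, into \eqref{eq:dxyzdzyx}. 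For (JT2) I would expand both sides by \eqref{eq:(xyz)ds}, rewrite $-d_{a\bullet x,y}+d_{x,a\bullet y}$ as $4D_{a,\langle x\vert y\rangle}$ using \eqref{eq:Daxy}, rewrite $\langle a\bullet x\vert y\rangle-\langle x\vert a\bullet y\rangle$ as $d_{x,y}(a)$ using \eqref{eq:dxya}, use that $d_{x,y}$ is a derivation of $(J\oplus T,\diamond)$ to replace $d_{x,y}(a)\bullet z-d_{x,y}(a\bullet z)$ by $-a\bullet d_{x,y}(z)$, and finally identify $4D_{a,\langle x\vert y\rangle}(z)$ with $a\bullet(\langle x\vert y\rangle\bullet z)-\langle x\vert y\rangle\bullet(a\bullet z)$ via \eqref{eq:Dabx}; the two sides then coincide. (JT5) goes the same way: the $d$-terms assemble to $-d_{x,y}(\langle z\vert w\rangle)$ because $d_{x,y}$ is a derivation on $T\times T\rightarrow J$, one application of \eqref{eq:axy} produces $2\langle x\vert y\rangle\cdot\langle z\vert w\rangle$, and a second application of \eqref{eq:axy} together with \eqref{eq:dxya} rewrites the right-hand side of (JT5) into the same expression.

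The step I expect to be the real work is (JT6). I would reformulate it as the operator identity $[S_{x,y},S_{z,w}]=S_{\ptriple{x,y,z},w}+S_{z,\ptriple{y,x,w}}$ on $T$, where $S_{x,y}\colon z\mapsto\ptriple{x,y,z}$, so that $S_{x,y}=\tfrac12\bigl(\lambda_{\langle x\vert y\rangle}-d_{x,y}\bigr)|_T$ with $\lambda_a(z)=a\bullet z$. The left-hand bracket expands using $[\lambda_a,\lambda_b]=4D_{a,b}$ on $T$ (which is \eqref{eq:Dabx}), $[d_{x,y},\lambda_a]=\lambda_{d_{x,y}(a)}$ (the derivation property of $d_{x,y}$ on $J\times T\rightarrow T$), and $[d_{x,y},d_{z,w}]=d_{d_{x,y}(z),w}+d_{z,d_{x,y}(w)}$ (the $\frd$-invariance of $d_{.,.}$). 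On the right-hand side one expands $\ptriple{x,y,z}$ as $\tfrac12(\langle x\vert y\rangle\bullet z-d_{x,y}(z))$ and uses bilinearity of $d_{.,.}$; the terms that are values of $d_{.,.}$ are then matched using \eqref{eq:Daxy}, while the terms of the form $\lambda_{(\cdot)}$ are matched using (JT1), (JT4), and (JT5) — all already available — together with the derivation property of $d_{x,y}$ on $T\times T\rightarrow J$. No new identity is needed; the obstacle is purely one of bookkeeping the coefficients and signs through a fairly long expansion. (Alternatively, since the hypotheses of the \emph{if} direction are exactly those in Theorem~\ref{th:gBC1}, the space $\frg$ of \eqref{eq:gslVVd} with bracket \eqref{eq:g[]} is a genuine Lie algebra, and (JT6) can instead be read off from the Jacobi identity in $\frg$ applied to iterated brackets of elements of $V\otimes T$.)

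For the converse, take $D_{.,.}$ and $d_{.,.}$ as in \eqref{eq:defD}--\eqref{eq:defd}; then \eqref{eq:Dabx} and \eqref{eq:dxya} hold by definition, \eqref{eq:axy} is (JT1), and \eqref{eq:dxyzdzyx} is (JT3) rewritten through \eqref{eq:(xyz)ds}. That $D_{a,b}$ lies in $\der(J\oplus T,\diamond)$ is the classical fact that it is the inner derivation of the Jordan algebra $J$ and acts on the special unital module $T$ compatibly (via \eqref{eq:abx}), together with compatibility with $\langle .\vert .\rangle$, which follows from (JT1); the first relation in \eqref{eq:Ds} holds on $J$ by the linearized Jordan identity and on $T$ by transporting it along the Jordan homomorphism $J\rightarrow\End_{\bF}(T)^+$. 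That $d_{x,y}$ lies in $\der(J\oplus T,\diamond)$ is checked product by product: compatibility with $\cdot$ from (JT1) and its linearization, with $\langle .\vert .\rangle$ from (JT1) and (JT5), and compatibility with $\bullet$ is exactly \eqref{eq:Daxy} restricted to $T$, which I would prove by rewriting each $\langle u\vert v\rangle\bullet w$ through (JT4), applying (JT2) twice, and folding back by (JT4); the same identities together with \eqref{eq:abx} yield \eqref{eq:Daxy} on $J$. With $D_{.,.}$ and $d_{.,.}$ now known to be derivations, \eqref{eq:Ds} and \eqref{eq:Daxy} are in hand and the converse is complete.
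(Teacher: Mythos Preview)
Your outline is correct and tracks the paper's proof closely, with one difference worth flagging. For \textbf{(JT6)} the paper avoids your operator expansion entirely: since the triple product $\ptriple{.,.,.}$ is built from $\frd$-invariant ingredients it is itself $\frd$-invariant, so $d_{x,y}$ acts on it as a derivation; combining this with \textbf{(JT2)} applied with $a=\langle x\vert y\rangle$, the symmetry $d_{x,y}=d_{y,x}$, and the skew symmetry $\langle x\vert y\rangle=-\langle y\vert x\rangle$ gives \textbf{(JT6)} in one line. Your bookkeeping route (or the alternative via the Jacobi identity in $\frg$) would succeed, but is heavier than needed.

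One small wording issue in the converse: the derivation property of $d_{x,y}$ with respect to $\bullet$, namely $d_{x,y}(a\bullet z)=d_{x,y}(a)\bullet z+a\bullet d_{x,y}(z)$, is not literally ``\eqref{eq:Daxy} restricted to $T$'' --- those are different identities. But your proposed mechanism (unfold via \textbf{(JT4)}, apply \textbf{(JT2)}, refold via \textbf{(JT4)}) is exactly how the paper handles both that derivation property and \eqref{eq:Daxy} on $T$, so the substance is fine.
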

\begin{proof}
Assume first that $(J,\cdot)$, $(T,\bullet)$, $\langle .,.\rangle$, $D_{.,.}$ and $d_{.,.}$ satisfy the conditions in the theorem, then use \eqref{eq:(xyz)ds} to define the trilinear product $\ptriple{.,.,.}:T\times T\times T\rightarrow T$. Then condition \textbf{(JT1)} is equivalent to \eqref{eq:axy}, while condition \textbf{(JT3)} is equivalent to \eqref{eq:dxyzdzyx}. Also \textbf{(JT4)} is a consequence of the symmetry of $d_{.,.}$ and the skew symmetry of $\langle .\vert .\rangle$.

For $a\in J$ and $x,y,z\in T$ we get:
\[
\begin{split}
a\bullet&\ptriple{x,y,z} -\ptriple{x,y,a\bullet z}\\
&=\frac{1}{2}\Bigl(d_{x,y}(a\bullet z)-a\bullet d_{x,y}(z)+a\bullet\bigl(\langle x\vert y\rangle\bullet z\bigr)-\langle x\vert y\rangle\bullet (a\bullet z)\Bigr)\\
&=\frac{1}{2}\Bigl(d_{x,y}(a)\bullet z+4D_{a,\langle x\vert y\rangle}(z)\Bigr)\\
&\null\qquad\text{(using \eqref{eq:Dabx} and the fact that $d_{x,y}$ is a derivation)}\\[4pt]
&=\frac{1}{2}\Bigl(\langle a\bullet x\vert y\rangle\bullet z-\langle x\vert a\bullet y\rangle\bullet z-d_{a\bullet x,y}(z)+d_{x,a\bullet y}(z)\Bigr)\\
&\null\qquad\text{(because of \eqref{eq:Daxy} and \eqref{eq:dxya})}\\[4pt]
&=\ptriple{a\bullet x,y,z}-\ptriple{x,a\bullet y,z},
\end{split}
\]
thus obtaining the validity of \textbf{(JT2)}. Now, for $x,y,z,w\in T$ we have:
\[
\begin{split}
\langle&\ptriple{x,y,z}\vert w\rangle+\langle z\vert \ptriple{x,y,w}\rangle\\
&=\frac{1}{2}\Bigl(-\langle d_{x,y}(z)\vert w\rangle -\langle z\vert d_{x,y}(w)\rangle +\langle\langle x\vert y\rangle\bullet z\vert w\rangle +\langle z\vert\langle x\vert y\rangle\bullet w\rangle\Bigr)\\
&=\frac{1}{2}\Bigl(-d_{x,y}\bigl(\langle z\vert w\rangle\bigr)+2\langle x\vert y\rangle\cdot\langle z\vert w\rangle\Bigr)\\
&\null\qquad\text{(using \eqref{eq:axy} and the fact that $d_{x,y}$ is a derivation)}\\[4pt]
&=\frac{1}{2}\Bigl(-\langle\langle z\vert w\rangle\bullet x\vert y\rangle +\langle x\vert\langle z\vert w\rangle\bullet y\rangle \\
&\qquad\qquad +\langle\langle z\vert w\rangle\bullet x\vert y\rangle +\langle x\vert \langle z\vert w\rangle\bullet y\rangle\Bigr)
 \quad\text{(using \eqref{eq:dxya} and \eqref{eq:axy})}\\[4pt]
&=\langle x\vert\langle z\vert w\rangle\bullet y\rangle,
\end{split}
\]
thus getting \textbf{(JT5)}. Now the symmetry of $d_{.,.}$ and the fact that $\ptriple{.,.,.}$ is a $\frd$-invariant map show that \textbf{(JT6)} is a consequence of \textbf{(JT2)} and the skew symmetry of $\langle .\vert.\rangle$.

\smallskip

Conversely, let $(J,T)$ be a $J$-ternary algebra, and consider $\frd=\der(J\oplus T,\diamond)$ and $D_{.,.}$ and $d_{.,.}$ defined by equations \eqref{eq:defD} and \eqref{eq:defd}. Then both $D_{.,.}$ and $d_{.,.}$ are $\frd$-invariant maps. Let us check that $D_{a,b}$ and $d_{x,y}$ belong to $\frd$ for any $a,b\in J$ and $x,y\in T$.

Actually, $D_{a,b}$ acts as a derivation of $(J,\cdot)$ since this is a Jordan algebra. Besides, the equation $D_{a,b}(c\bullet x)=D_{a,b}(c)\bullet x+c\bullet D_{a,b}(x)$ holds for any $a,b,c\in J$ and $x\in T$ since $T$ is a special module for $J$, and the condition $D_{a,b}(\langle x\vert y\rangle) =\langle D_{a,b}(x)\vert y\rangle +\langle x\vert D_{a,b}(y)\rangle$, for $a,b\in J$ and $x,y\in T$, is a consequence of \textbf{(JT1)}. Therefore $D_{a,b}\in \der(J\oplus T,\diamond)$ for any $a,b\in J$. Also, for $x,y\in T$, $d_{x,y}$ acts as a derivation of $(J,\cdot)$ because of \textbf{(JT1)} and \eqref{eq:defd}. For $x,y,z,w\in T$, \textbf{(JT5)} is equivalent to the equation
\[
\begin{split}
\langle d_{x,y}(z)\vert w\rangle +\langle z\vert d_{x,y}(w)\rangle
 -\Bigl(\langle\langle x\vert y\rangle\bullet z\vert w\rangle &+\langle z\vert\langle x\vert y\rangle\bullet w\rangle\Bigr)\\
&\quad =-2\langle x\vert \langle z\vert w\rangle \bullet y\rangle,
\end{split}
\]
and the expression in parenthesis above is, due to \textbf{(JT1)}, $2\langle x\vert y\rangle\cdot\langle z\vert w\rangle=2\langle z\vert w\rangle\langle x\vert y\rangle$, which equals (again by \textbf{(JT1)}) $\langle \langle z\vert w\rangle\bullet x\vert y\rangle +\langle x\vert\langle z\vert w\rangle\bullet y\rangle$. Hence \textbf{(JT5)} becomes, due to \textbf{(JT1)}, the equation
\[
\langle d_{x,y}(z)\vert w\rangle+\langle z\vert d_{x,y}(w)\rangle =\langle \langle z\vert w\rangle\bullet x\vert y\rangle-\langle x\vert\langle z\vert w\rangle\bullet y\rangle =d_{x,y}\bigl(\langle z\vert w\rangle\bigr).
\]
Finally, \textbf{(JT4)} shows that $d_{x,y}(z)=-\Bigl(\ptriple{x,y,z}+\ptriple{y,x,z}\Bigr)$, which is symmetric on $x,y$. For $a\in J$ and $x,y,z\in T$ we get:
\[
\begin{split}
d_{x,y}(a&\bullet z)-d_{x,y}(a)\bullet z-a\bullet d_{x,y}(z)\\
&=-\ptriple{x,y,a\bullet z}-\ptriple{y,x,a\bullet z}-\langle a\bullet x\vert y\rangle\bullet z+\langle x\vert a\bullet y\rangle\bullet z\\
&\qquad +a\bullet \ptriple{x,y,z}+a\bullet\ptriple{y,x,z}\\
&=\Bigl(\ptriple{a\bullet x,y,z}-\ptriple{y,a\bullet x,z}-\langle a\bullet x\vert y\rangle\bullet z\Bigr)\\
&\qquad + \Bigl(\ptriple{a\bullet y,x,z}-\ptriple{x,a\bullet y,z}-\langle a\bullet y\vert x\rangle\bullet z\Bigr)\quad\text{(thanks to \textbf{(JT2)})}\\
&=0\quad\text{(because of \textbf{(JT4)} and using the skew symmetry of $\langle .\vert.\rangle$).}
\end{split}
\]
Therefore, the images of the maps $D_{.,.}$ and $d_{.,.}$ are contained in the Lie algebra of derivations of $(J\oplus T,\diamond)$.

Now, \eqref{eq:Ds} is a consequence of $J$ being a Jordan algebra, \eqref{eq:axy} is equivalent to \textbf{(JT1)}, \eqref{eq:dxyzdzyx} is equivalent to \textbf{(JT3)}. And as for the validity of \eqref{eq:Daxy}: $4D_{a,\langle x\vert y\rangle}=-d_{a\bullet x,y}+d_{x,a\bullet y}$ for any $a\in J$ and $x,y\in T$, this is valid on $J$ as a consequence of \textbf{(JT1)}, while for $z\in T$:
\[
\begin{split}
4D_{a,\langle x\vert y\rangle}&(z)+d_{a\bullet x,y}(z)-d_{x,a\bullet y}(z)\\
&=a\bullet\bigl(\langle x\vert y\rangle\bullet z\bigr)-\langle x\vert y\rangle\bullet (a\bullet z)\\
&\quad -\Bigl(\ptriple{a\bullet x,y,z}+\ptriple{y,a\bullet x,z}\Bigr)+\Bigl(\ptriple{x,a\bullet y,z}+\ptriple{a\bullet y,x,z}\Bigr)\\
&=a\bullet\Bigl(\ptriple{x,y,z}-\ptriple{y,x,z}\Bigr)-\Bigl(\ptriple{x,y,a\bullet z}-\ptriple{y,x,a\bullet z}\Bigr)\\
&\quad -\Bigl(\ptriple{a\bullet x,y,z}+\ptriple{y,a\bullet x,z}\Bigr)+\Bigl(\ptriple{x,a\bullet y,z}+\ptriple{a\bullet y,x,z}\Bigr)\\
&\qquad\qquad\qquad\text{(where we have used \textbf{(JT4)})}\\
&=0\qquad\text{(because of \textbf{(JT2)}).\qedhere}
\end{split}
\]
\end{proof}

\smallskip

\begin{corollary}\label{co:gJT}
Let $(J,T)$ be a $J$-ternary algebra and let $V$ be a two-dimensional vector space endowed with a nonzero skew symmetric bilinear map $(.\vert.)$. Then the vector space
\begin{equation}\label{eq:gJT}
\frg(J,T)=\bigl(\frsl(V)\otimes J)\oplus\bigl(V\otimes T\bigr)\oplus \frd,
\end{equation}
where $\frd$ is the linear span of the operators $D_{a,b}$ and $d_{x,y}$ in $\End_\bF(J\oplus T)$ defined in \eqref{eq:defD} and \eqref{eq:defd}, for $a,b\in J$ and $x,y\in T$, and with bracket given in \eqref{eq:g[]}, is a $BC_1$-graded Lie algebra of type $C_1$. \qed
\end{corollary}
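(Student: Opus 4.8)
The plan is to deduce the statement directly from Theorems \ref{th:gBC1} and \ref{th:JT}: the substantive content is already established there, and what remains is to check that the concrete subspace $\frd$ spanned by the operators $D_{a,b}$ and $d_{x,y}$ — rather than the full derivation algebra — qualifies as the ``$\frd$'' appearing in Theorem \ref{th:gBC1}.

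First I would invoke the converse half of Theorem \ref{th:JT}. Applied to the given $J$-ternary algebra $(J,T)$, it tells us that the operators $D_{a,b}$ and $d_{x,y}$ of \eqref{eq:defD} and \eqref{eq:defd} all lie in $\der(J\oplus T,\diamond)$, that the bilinear maps $D_{.,.}$ and $d_{.,.}$ are invariant under $\der(J\oplus T,\diamond)$, and that \eqref{eq:Ds}, \eqref{eq:Daxy}, \eqref{eq:axy} and \eqref{eq:dxyzdzyx} hold. The two identities still needed by Theorem \ref{th:gBC1}, namely \eqref{eq:Dabx} and \eqref{eq:dxya}, are nothing but the defining formulas for $D_{a,b}(x)$ in \eqref{eq:defD} and for $d_{x,y}(a)$ in \eqref{eq:defd}, so they require no further argument.

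Next I would put $\frd:=\espan{D_{a,b},\,d_{x,y}:a,b\in J,\ x,y\in T}\subseteq\der(J\oplus T,\diamond)$ and verify the structural properties. Each of the listed operators preserves the decomposition $J\oplus T$ (by the explicit formulas \eqref{eq:defD}, \eqref{eq:defd}), hence so does every element of $\frd$, so $\frd$ acts on $J$ and on $T$ separately; being contained in $\der(J\oplus T,\diamond)$, this action is by derivations of $(J\oplus T,\diamond)$. Since $\langle x\vert y\rangle$ is the $J$-component of $x\diamond y$ and $a\bullet x$ is the $T$-component of $a\diamond x$, it follows that the $\bullet$-action and the map $\langle.\vert.\rangle$ are $\frd$-invariant. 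Finally, the $\der(J\oplus T,\diamond)$-invariance of $D_{.,.}$ and $d_{.,.}$, read off for $\varphi\in\frd$, gives $[\varphi,D_{a,b}]=D_{\varphi(a),b}+D_{a,\varphi(b)}$ and $[\varphi,d_{x,y}]=d_{\varphi(x),y}+d_{x,\varphi(y)}$, both in $\frd$; hence $[\frd,\frd]\subseteq\frd$, so $\frd$ is a Lie subalgebra of $\der(J\oplus T,\diamond)$ on which $D_{.,.}$ and $d_{.,.}$ remain $\frd$-invariant.

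With all of this in hand, $(J,\cdot)$, $(T,\bullet)$, $\frd$, and the maps $D_{.,.}$, $\langle.\vert.\rangle$, $d_{.,.}$ together with the $\frd$-actions satisfy every hypothesis of Theorem \ref{th:gBC1}, so that theorem yields that the space \eqref{eq:gJT} with the bracket \eqref{eq:g[]} is a Lie algebra, $BC_1$-graded of type $C_1$, which is the assertion. The only step demanding any attention — and hence the closest thing to an obstacle — is the bookkeeping of the previous paragraph: checking that the span $\frd$, rather than the full derivation algebra, is closed under commutators and carries all the invariance required to make sense of \eqref{eq:g[]}; everything else is a direct appeal to the two theorems.
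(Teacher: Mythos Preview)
Your proposal is correct and matches the paper's intent: the corollary is stated with a bare \qed, so the paper regards it as an immediate consequence of Theorems \ref{th:gBC1} and \ref{th:JT}, and your argument is precisely the natural unpacking of that implication. The one point you flag --- that the span of the $D_{a,b}$ and $d_{x,y}$ is itself a Lie subalgebra closed under the needed invariance --- is exactly the detail the paper leaves implicit, and your justification via $[\varphi,D_{a,b}]=D_{\varphi(a),b}+D_{a,\varphi(b)}$ and the analogous formula for $d$ is the right one.
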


\medskip

\section{$J$-ternary algebras and special Freudenthal-Kantor triple systems}%
\label{se:JternarySpecialFKTS}

In \cite{YO84}, Yamaguti and Ono considered a wide class of triple systems: the $(\epsilon,\delta)$ Freudenthal-Kantor triple systems, which extend the classical Freudenthal triple systems and are useful tools in the construction of Lie algebras and superalgebras.

An $(\epsilon,\delta)$ \emph{Freudental-Kantor triple system} ($\epsilon,\delta$ are either $1$ or $-1$) over a ground field $\bF$ is a triple system $(U,xyz)$ such that, if $L(x,y),K(x,y)\in\End_{\bF}(U)$ are defined by
\begin{equation}\label{eq:LK}
\left\{\begin{aligned}
L(x,y)z&=xyz\\
K(x,y)z&=xzy-\delta yzx,
\end{aligned}\right.
\end{equation}
then
\begin{subequations}\label{eq:FK}
\begin{gather}
[L(u,v),L(x,y)]=L\bigl(L(u,v)x,y\bigr)+\epsilon L\bigl(x,L(v,u)y\bigr),\label{eq:FK1}\\
K\bigl(K(u,v)x,y\bigr)=L(y,x)K(u,v)-\epsilon K(u,v)L(x,y),\label{eq:FK2}
\end{gather}
hold for any $x,y,u,v\in U$.
\end{subequations}

For $\epsilon=-1$ and $\delta=1$, these are the so called \emph{Kantor triple systems} (or generalized Jordan triple systems of second order \cite{Kan73}).

The aim of this section is to show the close relationship between the $J$-ternary algebras and some particular $(1,1)$ Freudenthal-Kantor triple systems.

\begin{lemma}\label{le:FK1}
Let $(U,xyz)$ be a triple system satisfying equation \eqref{eq:FK1}, with $\epsilon\in\{\pm1\}$, and define the endomorphisms $S(x,y)$ and $T(x,y)\in\End_\bF(U)$ by
\begin{equation}\label{eq:STs}
\begin{split}
S(x,y)&=L(x,y)+\epsilon L(y,x)\\
T(x,y)&=L(y,x)-\epsilon L(x,y).
\end{split}
\end{equation}
Then for any $u,v\in U$, $S(u,v)$ is a derivation of the triple system $(U,xyz)$, while $T(u,v)$ satisfies
\[
T(u,v)\bigl(xyz\bigr)=\bigl(T(u,v)x\bigr)yz-x\bigl(T(u,v)y\bigr)z+xy\bigl(T(u,v)z\bigr)
\]
for any $x,y,z\in U$. As a consequence, the following equations hold:
\begin{subequations}\label{subeq:STs}
\begin{align}
[S(u,v),L(x,y)]&=L\bigl(S(u,v)x,y\bigr)+L\bigl(x,S(u,v)y\bigr)\label{eq:[SL]}\\
[T(u,v),L(x,y)]&=L\bigl(T(u,v)x,y\bigr)-L\bigl(x,T(u,v)y\bigr)\label{eq:[TL]}\\
[S(u,v),T(x,y)]&=T\bigl(S(u,v)x,y\bigr)+T\bigl(x,S(u,v)y\bigr)\label{eq:[ST]}\\
[T(u,v),S(x,y)]&=-\epsilon T\bigl(T(u,v)x,y\bigr)+\epsilon T\bigl(x,T(u,v)y\bigr)\label{eq:[TS]}\\
[T(u,v),T(x,y)]&=-\epsilon S\bigl(T(u,v)x,y\bigr)+\epsilon S\bigl(x,T(u,v)y\bigr)\label{eq:[TT]}
\end{align}
\end{subequations}
\end{lemma}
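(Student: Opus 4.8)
My plan is organizational: the whole statement is an unwinding of \eqref{eq:FK1}. First I would rewrite \eqref{eq:FK1} as a first-order identity for the triple product itself. Evaluating both sides of \eqref{eq:FK1} on an element $z$ and using $L(u,v)\bigl(L(x,y)z\bigr)=[L(u,v),L(x,y)]z+L(x,y)\bigl(L(u,v)z\bigr)$ gives the master identity
\begin{equation*}
L(u,v)(xyz)=\bigl(L(u,v)x\bigr)yz+\epsilon\,x\bigl(L(v,u)y\bigr)z+xy\bigl(uvz\bigr)
\tag{$\ast$}
\end{equation*}
valid for all $u,v,x,y,z\in U$.

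Next I would feed the two combinations defining $S$ and $T$ into $(\ast)$. Since $S(u,v)=L(u,v)+\epsilon L(v,u)$, adding $(\ast)$ to $\epsilon$ times the identity obtained from $(\ast)$ by interchanging $u$ and $v$, and using $\epsilon^2=1$, the three right-hand terms collapse exactly into $\bigl(S(u,v)x\bigr)yz+x\bigl(S(u,v)y\bigr)z+xy\bigl(S(u,v)z\bigr)$; thus $S(u,v)$ is a derivation of $(U,xyz)$. Since $T(u,v)=L(v,u)-\epsilon L(u,v)$, the analogous combination of two instances of $(\ast)$ yields $\bigl(T(u,v)x\bigr)yz-x\bigl(T(u,v)y\bigr)z+xy\bigl(T(u,v)z\bigr)$, the minus sign on the middle term arising because in $(\ast)$ the middle term alone carries the factor $\epsilon$.

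For the commutator relations I would first isolate two elementary remarks, each obtained by evaluating on an arbitrary $z$ and cancelling the common term involving $(\,\cdot\,z)$: if $D\in\End_\bF(U)$ is a derivation of $(U,xyz)$ then $[D,L(x,y)]=L(Dx,y)+L(x,Dy)$, and if $P\in\End_\bF(U)$ satisfies $P(xyz)=(Px)yz-x(Py)z+xy(Pz)$ then $[P,L(x,y)]=L(Px,y)-L(x,Py)$. Applying these with $D=S(u,v)$ and $P=T(u,v)$ gives \eqref{eq:[SL]} and \eqref{eq:[TL]} directly. Then \eqref{eq:[ST]}, \eqref{eq:[TS]} and \eqref{eq:[TT]} follow by expanding the second argument via $S(x,y)=L(x,y)+\epsilon L(y,x)$ and $T(x,y)=L(y,x)-\epsilon L(x,y)$, inserting \eqref{eq:[SL]} or \eqref{eq:[TL]} into each of the two resulting commutators, and reassembling the four $L$-terms into two $S$- or $T$-terms. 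The reassembly uses the obvious symmetry relations $S(a,b)=\epsilon S(b,a)$ and $T(a,b)=-\epsilon T(b,a)$, together with $L(a,b)+\epsilon L(b,a)=S(a,b)$ and $L(a,b)-\epsilon L(b,a)=-\epsilon T(a,b)$.

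I do not expect a real obstacle: the proof is a chain of routine but sign-sensitive manipulations. The one place demanding attention is matching the exact $\epsilon$-pattern on the right-hand sides of \eqref{eq:[TS]} and \eqref{eq:[TT]}; there one must use $T(a,b)=-\epsilon T(b,a)$ and $S(a,b)=\epsilon S(b,a)$ to rewrite the terms $T(y,T(u,v)x)$, $S(y,T(u,v)x)$, $S(T(u,v)y,x)$, etc., that come out of the regrouping, into the forms stated in the lemma.
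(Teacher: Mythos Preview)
Your proposal is correct and follows essentially the same route as the paper: the paper says that the derivation property of $S(u,v)$ and the sign-twisted property of $T(u,v)$ ``follow at once from \eqref{eq:FK1}'' and are equivalent to \eqref{eq:[SL]} and \eqref{eq:[TL]}, with the remaining identities ``obtained from these''---exactly the master-identity plus linear-combination argument you spell out. Your write-up simply makes explicit the regroupings (and the use of $S(a,b)=\epsilon S(b,a)$, $T(a,b)=-\epsilon T(b,a)$) that the paper leaves to the reader.
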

\begin{proof}
The fact that $S(u,v)$ is a derivation and that $T(u,v)$ satisfies the equation above when applied to a product $xyz$ follow at once from \eqref{eq:FK1}. These conditions are equivalent to equations \eqref{eq:[SL]} and \eqref{eq:[TL]}. The other equations are obtained from these.
\end{proof}

\smallskip

\begin{remark} Let $(U,xyz)$ be a triple system satisfying equation \eqref{eq:FK1}, with $\epsilon\in\{\pm1\}$,  and denote by $\calL$ (respectively $\calS$, $\calT$) the linear span of the operators $L(x,y)$ (respectively $S(x,y)$, $T(x,y)$) for $x,y\in U$. Lemma \ref{le:FK1} shows that $\calS$ is a subalgebra of $\calL$, and that the conditions $\calL=\calS+\calT$, $[\calS,\calT]\subseteq \calT$ and $[\calT,\calT]\subseteq \calS$ hold. In particular, this shows that $\calS\cap\calT$ is an ideal of $\calL$, and modulo this ideal we get a $\bZ_2$-graded Lie algebra.
\end{remark}

\smallskip

\begin{definition}
Let $U$ be an $(\epsilon,\delta)$ Freudenthal-Kantor triple system. Then $U$ is said to be \emph{special} in case
\begin{equation}\label{eq:special}
K(x,y)=\epsilon\delta L(y,x)-\epsilon L(x,y)
\end{equation}
holds for any $x,y\in U$.

Moreover, $U$ is said to be \emph{unitary} in case the identity map belongs to $K(U,U)$ (the linear span of the endomorphisms $K(x,y)$).
\end{definition}

\smallskip

Especially, any balanced system (that is, $K(x,y)=b(x,y)1$ for some nonzero bilinear form $b(x,y)$) is unitary.

\smallskip

Therefore, an $(\epsilon,\epsilon)$ Freudenthal-Kantor triple system is special if the operators $K(x,y)$ coincide with our previous $T(x,y)$ considered in Lemma \ref{le:FK1}.

\begin{proposition}\label{pr:unitalspecial}
Let $U\ne 0$ be a unitary $(\epsilon,\delta)$ Freudenthal-Kantor triple system. Then $\epsilon=\delta$ and $U$ is special.
\end{proposition}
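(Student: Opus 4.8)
The plan is to derive everything from axiom \eqref{eq:FK2} together with the definition \eqref{eq:LK} of $K$; axiom \eqref{eq:FK1} will not be needed. The unitary hypothesis lets us fix elements $a_i,b_i\in U$ with $\sum_i K(a_i,b_i)=1$, the identity endomorphism of $U$; since $U\ne 0$, this $1$ is a nonzero element of $\End_\bF(U)$. I would also record at the outset the elementary identity $K(y,x)=-\delta K(x,y)$, valid for all $x,y\in U$, which follows at once from \eqref{eq:LK} and $\delta^2=1$.

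The key step is to substitute $(u,v)=(a_i,b_i)$ into \eqref{eq:FK2} and sum over $i$. On the right-hand side, the bilinearity of composition in $\End_\bF(U)$ and $\sum_i K(a_i,b_i)=1$ give $\sum_i\bigl(L(y,x)K(a_i,b_i)-\epsilon K(a_i,b_i)L(x,y)\bigr)=L(y,x)-\epsilon L(x,y)$. On the left-hand side, the linearity of $K(\,\cdot\,,y)$ in its first slot gives $\sum_i K\bigl(K(a_i,b_i)x,y\bigr)=K\bigl(\sum_i K(a_i,b_i)x,\,y\bigr)=K(x,y)$. Hence
\[
K(x,y)=L(y,x)-\epsilon L(x,y)\qquad\text{for all }x,y\in U.
\]

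To finish, I would feed the symmetry $K(y,x)=-\delta K(x,y)$ into this formula, obtaining $-\delta\bigl(L(y,x)-\epsilon L(x,y)\bigr)=L(x,y)-\epsilon L(y,x)$, that is, $(\epsilon-\delta)L(y,x)=(1-\epsilon\delta)L(x,y)$. If $\epsilon\ne\delta$ then $\epsilon\delta=-1$ and $\epsilon-\delta=2\epsilon$, so this reduces to $L(y,x)=\epsilon L(x,y)$; substituting back into the displayed formula yields $K(x,y)=0$ for all $x,y$, whence $1=\sum_i K(a_i,b_i)=0$, contradicting $U\ne 0$. Therefore $\epsilon=\delta$, so $\epsilon\delta=\epsilon^2=1$, and the displayed formula becomes $K(x,y)=\epsilon\delta L(y,x)-\epsilon L(x,y)$, which is exactly condition \eqref{eq:special}: $U$ is special.

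The one point that deserves care is the collapsing of the sum in the key step: it relies on the linearity of $K$ in its first argument, and on the linearity of left multiplication by $L(y,x)$ and right multiplication by $L(x,y)$ on $\End_\bF(U)$, which together upgrade the mere membership $1\in K(U,U)$ to the global identity $K(x,y)=L(y,x)-\epsilon L(x,y)$. Once that identity is available, the dichotomy ``$\epsilon=\delta$ or $K\equiv 0$'' is forced purely by the $\delta$-symmetry of $K$, and the second alternative is ruled out by $U\ne 0$.
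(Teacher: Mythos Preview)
Your proof is correct and follows essentially the same approach as the paper: substitute the unitary decomposition into \eqref{eq:FK2} to obtain $K(x,y)=L(y,x)-\epsilon L(x,y)$, then use the built-in $\delta$-symmetry $K(y,x)=-\delta K(x,y)$ to force $K\equiv 0$ when $\epsilon\ne\delta$, contradicting unitarity. The only cosmetic difference is that the paper phrases the endgame in terms of the two symmetry relations on $K$ directly (getting $K(x,y)=-\epsilon K(y,x)$ and $K(x,y)=-\delta K(y,x)$, hence $K=0$), whereas you pass through the relation $L(y,x)=\epsilon L(x,y)$ first; the content is identical.
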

\begin{proof}
If $U$ is unitary, then $1=\sum_{i=1}^mK(u_i,v_i)$ for some $u_i,v_i\in U$. Replace $u$ and $v$ by $u_i$ and $v_i$ in \eqref{eq:FK2} and sum over $i$ to get $K(x,y)=L(y,x)-\epsilon L(x,y)$. For $\epsilon=\delta$ we get \eqref{eq:special}, while for $\epsilon=-\delta$ this shows $K(x,y)=-\epsilon K(y,x)$, and \eqref{eq:LK} shows that $K(x,y)=-\delta K(y,x)$, so that $K(x,y)=0$ for any $x,y\in U$, a contradiction to $U$ being unitary.
%
%
\end{proof}

\smallskip

\begin{proposition}\label{pr:Ss} \null\quad
\begin{romanenumerate}
\item
Let $U$ be a special $(\epsilon,\epsilon)$ Freudenthal-Kantor triple system. Then we have
\begin{equation}\label{eq:KKs}
\begin{split}
K(u,v)K(x,y)&+K(x,y)K(u,v)\\
&=K\bigl(K(u,v)x,y\bigr)+K\bigl(x,K(u,v)y\bigr),
\end{split}
\end{equation}
for any $u,v,x,y\in U$.

Conversely, if the triple product on $U$ satisfies equations \eqref{eq:FK1}, \eqref{eq:special} and \eqref{eq:KKs}, then it defines a special $(\epsilon,\epsilon)$ Freudenthal-Kantor triple system.

\item
Let $U$ be a special $(\epsilon,-\epsilon)$ Freudenthal-Kantor triple system. Then $K(x,y)$ is a derivation of $U$ for any $x,y\in U$, and we have
\begin{subequations}
\begin{gather}
xyz+\epsilon xzy=0,\label{eq:eesymmetry}\\
K(u,v)T(x,y)+T(x,y)K(u,v)=K\bigl(K(u,v)x,y\bigr)-K\bigl(x,K(u,v)y\bigr),\label{eq:KKT}
\end{gather}
\end{subequations}
for any $u,v,x,y\in U$.

Conversely, if $\epsilon=-\delta$ and the triple product on $U$ satisfies \eqref{eq:FK1}, \eqref{eq:eesymmetry} and \eqref{eq:KKT}, then it defines a special $(\epsilon,-\epsilon)$ Freudenthal-Kantor triple system.
\end{romanenumerate}
\end{proposition}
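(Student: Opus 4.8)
The plan is first to rewrite the \emph{special} condition in terms of the operators $S(x,y)$ and $T(x,y)$ of Lemma~\ref{le:FK1}, and then to show that \eqref{eq:KKs} (resp.\ \eqref{eq:KKT}) is, modulo the consequences of \eqref{eq:FK1} collected in Lemma~\ref{le:FK1}, an exact reformulation of \eqref{eq:FK2}. For part (i) we have $\delta=\epsilon$, so $\epsilon\delta=1$ and \eqref{eq:special} reads $K(x,y)=L(y,x)-\epsilon L(x,y)=T(x,y)$; the definition of $K$ also gives $K(y,x)=-\epsilon K(x,y)$. For part (ii) we have $\delta=-\epsilon$, so $\epsilon\delta=-1$ and \eqref{eq:special} becomes $K(x,y)=-L(y,x)-\epsilon L(x,y)=-\epsilon S(x,y)$; since $S(u,v)\in\der(U)$ by Lemma~\ref{le:FK1}, this at once shows that $K(x,y)$ is a derivation of $U$. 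I will also use the elementary identities $S(y,x)=\epsilon S(x,y)$ and, when $K=T$ (part (i)), the inversions $L(x,y)=\tfrac12\bigl(S(x,y)-\epsilon K(x,y)\bigr)$ and $L(y,x)=\tfrac12\bigl(\epsilon S(x,y)+K(x,y)\bigr)$.

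For the forward direction of (i): write \eqref{eq:FK2} once as it stands and once with $x$ and $y$ interchanged, and in the second instance use $K(x,K(u,v)y)=-\epsilon K(K(u,v)y,x)$. Adding the two, the terms involving $L$ collapse to $\bigl(L(y,x)-\epsilon L(x,y)\bigr)K(u,v)+K(u,v)\bigl(L(y,x)-\epsilon L(x,y)\bigr)=T(x,y)K(u,v)+K(u,v)T(x,y)$, which, since $K=T$, is exactly the right-hand side of \eqref{eq:KKs}. For the converse of (i), assume \eqref{eq:FK1}, \eqref{eq:special} and \eqref{eq:KKs}. From \eqref{eq:[TL]} and $L(y,x)+\epsilon L(x,y)=\epsilon S(x,y)$ one obtains by a short manipulation the \emph{difference identity} $K(K(u,v)x,y)-K(x,K(u,v)y)=-\epsilon[K(u,v),S(x,y)]$; adding this to \eqref{eq:KKs} and halving expresses $K(K(u,v)x,y)$ purely in terms of $S$ and $K$, and expanding the right-hand side of \eqref{eq:FK2} via the two inversion formulas shows it equals that same expression. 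Hence \eqref{eq:FK2} holds and $U$ is a special $(\epsilon,\epsilon)$ Freudenthal-Kantor triple system.

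For part (ii), I would first dispose of \eqref{eq:eesymmetry}, which is a consequence of \eqref{eq:special} alone: computing $K(x,y)z$ both from its definition (equal to $xzy+\epsilon yzx$, since $\delta=-\epsilon$) and from $K=-\epsilon S$ (equal to $-\epsilon\, xyz-yxz$) and rearranging gives $g(x;y,z)=-\epsilon\, g(y;x,z)$, where $g(a;b,c)=abc+\epsilon\, acb$ satisfies $g(a;c,b)=\epsilon\, g(a;b,c)$; chaining these two symmetries yields $2g(x;y,z)=0$, and as the characteristic is $\ne2$ we get $xyz+\epsilon xzy=0$. For the identity \eqref{eq:KKT}: substituting $K=-\epsilon S$ makes its two sides $-\epsilon\bigl(S(u,v)T(x,y)+T(x,y)S(u,v)\bigr)$ and $S(S(u,v)x,y)-S(x,S(u,v)y)$; applying \eqref{eq:[ST]} to the first and expanding everything in terms of $L$ shows that, given \eqref{eq:FK1}, \eqref{eq:KKT} is equivalent to the identity
\[
(\star)\qquad \epsilon L\bigl(y,S(u,v)x\bigr)-L\bigl(x,S(u,v)y\bigr)=-\epsilon\, T(x,y)S(u,v).
\]
Independently, rewriting \eqref{eq:FK2} with $K=-\epsilon S$ and using \eqref{eq:[SL]} shows that, given \eqref{eq:FK1} and \eqref{eq:special}, \eqref{eq:FK2} is equivalent to this very same $(\star)$. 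Consequently, in a special $(\epsilon,-\epsilon)$ triple system \eqref{eq:FK2} forces \eqref{eq:KKT}; conversely, if $\epsilon=-\delta$ and \eqref{eq:FK1}, \eqref{eq:eesymmetry}, \eqref{eq:KKT} hold, then \eqref{eq:eesymmetry} gives $K=-\epsilon S$, i.e.\ \eqref{eq:special}, and then \eqref{eq:KKT}$\,\Leftrightarrow(\star)\Leftrightarrow\,$\eqref{eq:FK2}, so $U$ is a special $(\epsilon,-\epsilon)$ Freudenthal-Kantor triple system.

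The main obstacle is the bookkeeping in identifying, in each part, the precise consequence of \eqref{eq:FK1} that encodes the antisymmetric (commutator) content of the data — the difference identity in (i), the identity $(\star)$ in (ii) — and in matching it against the symmetric (anticommutator/derivation) content coming from \eqref{eq:KKs} or \eqref{eq:KKT} and from the right-hand side of \eqref{eq:FK2}. This requires care with the signs $S(y,x)=\epsilon S(x,y)$ versus $K(y,x)=-\epsilon K(x,y)$, which behave oppositely for $\epsilon=1$ and $\epsilon=-1$ and are the source of the factors of $\epsilon$ in $(\star)$ and in the inversion formulas.
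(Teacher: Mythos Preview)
Your proof is correct and follows essentially the same route as the paper's. Both arguments pivot on the inversion formulas expressing $L(x,y)$ and $L(y,x)$ in terms of $S$ and $K$ (or $T$), and on the commutator identities of Lemma~\ref{le:FK1}: your ``difference identity'' in part~(i) is precisely equation~\eqref{eq:[TS]} (since $K=T$ there), and your intermediate identity~$(\star)$ in part~(ii) is what remains of \eqref{eq:FK2} after substituting $K=-\epsilon S$ and cancelling via~\eqref{eq:[SL]}. The one organizational difference is that in part~(ii) the paper computes the forward and converse directions separately---splitting $L(y,x)K(u,v)-\epsilon K(u,v)L(x,y)$ into the commutator $[K(u,v),K(x,y)]$ (handled because $K(u,v)$ is a derivation) plus the anticommutator $K(u,v)T(x,y)+T(x,y)K(u,v)$ (handled by~\eqref{eq:KKT})---whereas you reduce both \eqref{eq:FK2} and \eqref{eq:KKT} to the common identity~$(\star)$ and conclude the equivalence at once; this is a tidy variant but not a genuinely different argument.
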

\begin{proof} For (i), using \eqref{eq:FK2} we compute:
\[
\begin{split}
K\bigl(&K(u,v)x,y\bigr)+K\bigl(x,K(u,v)y\bigr)=K\bigl(K(u,v)x,y\bigr)-\epsilon K\bigl(K(u,v)y,x\bigr)\\
&=\Bigl(L(y,x)-\epsilon L(x,y)\Bigr)K(u,v)+K(u,v)\Bigl(-\epsilon L(x,y)+L(y,x)\Bigr)\\
&=K(x,y)K(u,v)+K(u,v)K(x,y),
\end{split}
\]
which proves \eqref{eq:KKs}.

For the converse statement, we must check the validity of \eqref{eq:FK2}. Note that $L(x,y)=\frac{1}{2}\bigl(S(x,y)-\epsilon K(x,y)\bigr)$, and $K(x,y)$ is our previous $T(x,y)$ by \eqref{eq:special}.  Hence we compute:
\[
\begin{split}
L(y,&x)K(u,v)-\epsilon K(u,v)L(x,y)\\
&=\frac{1}{2}\Bigl(S(y,x)-\epsilon K(y,x)\Bigr)K(u,v)-\frac{1}{2}\epsilon K(u,v)\Bigl(S(x,y)-\epsilon K(x,y)\Bigr)\\
&=\frac{1}{2}\epsilon [S(x,y),K(u,v)]+\frac{1}{2}\Bigl(K(x,y)K(u,v)+K(u,v)K(x,y)\Bigr)\\
&=-\frac{1}{2}\epsilon[K(u,v),S(x,y)]+\frac{1}{2}\Bigl(K(x,y)K(u,v)+K(u,v)K(x,y)\Bigr)\\
&=K\bigl(K(u,v)x,y\bigr)\quad\text{because of \eqref{eq:[TS]} and \eqref{eq:KKs}.}
\end{split}
\]

Now for (ii), assume $U$ is a special $(\epsilon,-\epsilon)$ Freudenthal-Kantor triple system. Then $K(x,y)=-\epsilon S(x,y)$ for any $x,y$ and hence $K(x,y)$ is a derivation of $U$ (Lemma \ref{le:FK1}). Then using \eqref{eq:FK2} with $\delta=-\epsilon$ we get:
\[
\begin{split}
K\bigl(K(u,v)x&,y\bigr)-K\bigl(x,K(u,v)y\bigr)\\
&=K\bigl(K(u,v)x,y\bigr)-\epsilon K\bigl(K(u,v)y,x\bigr)\\
&=\Bigl(L(y,x)K(u,v)-\epsilon K(u,v)L(x,y)\Bigr)\\
&\qquad -\epsilon\Bigl(L(x,y)K(u,v)-\epsilon K(u,v)L(y,x)\Bigr)\\
&=T(x,y)K(u,v)+K(u,v)T(x,y)
\end{split}
\]
for any $u,v,x,y\in U$. Consider too the trilinear map $\Lambda(x,y,z)=xyz+\epsilon xzy$. Equations \eqref{eq:LK} and \eqref{eq:special} give:
\[
xzy-\delta yzx=\epsilon\delta yxz-\epsilon xyz,
\]
which shows $\Lambda(x,y,z)=\delta\Lambda(y,x,z)$ for any $x,y,z\in U$. But by its own definition $\Lambda(x,y,z)=\epsilon\Lambda(x,z,y)$. Hence with $\epsilon=-\delta$ this gives $\Lambda(x,z,y)=-\Lambda(y,x,z)$, and hence
\[
\Lambda(x,z,y)=-\Lambda(y,x,z)=\Lambda(z,y,x)=-\Lambda(x,z,y),
\]
and we get $\Lambda(x,y,z)=0$ for any $x,y,z\in U$. (Note that the argument above shows that for special $(\epsilon,\epsilon)$ Freudenthal-Kantor triple systems, $\Lambda(x,y,z)$ is symmetric on its arguments for $\epsilon=1$ and alternating for $\epsilon=-1$.)

Conversely, with $\epsilon=-\delta$, by \eqref{eq:eesymmetry} $K(x,y)=-L(y,x)-\epsilon L(x,y)=-\epsilon S(x,y)$ is a derivation of $U$, and $T(x,y)=L(y,x)-\epsilon L(x,y)$, so $L(x,y)=\frac{-\epsilon}{2}\bigl(T(x,y)+K(x,y)\bigr)$ and $L(y,x)=\frac{1}{2}\bigl(T(x,y)-K(x,y)\bigr)$. Therefore, using that $K(u,v)$ is a derivation of $U$ and \eqref{eq:KKT} we obtain
\[
\begin{split}
L(y&,x)K(u,v)-\epsilon K(u,v)L(x,y)\\
&=\frac{1}{2}\bigl(T(x,y)-K(x,y)\bigr)K(u,v)+\frac{1}{2}K(u,v)\bigl(T(x,y)+K(x,y)\bigr)\\
&=\frac{1}{2}\bigl(K(u,v)T(x,y)+T(x,y)K(u,v)\bigr)+\frac{1}{2}[K(u,v),K(x,y)]\\
&=\frac{1}{2}\Bigl(K\bigl(K(u,v)x,y\bigr)-K\bigl(x,K(u,v)y\bigr)\Bigr)\\
&\qquad\qquad +
\frac{1}{2}\Bigl(K\bigl(K(u,v)x,y\bigr)+K\bigl(x,K(u,v)y\bigr)\Bigr)\\
&=K\bigl(K(u,v)x,y\bigr)
\end{split}
\]
for any $u,v,x,y\in U$, thus getting \eqref{eq:FK2}.
\end{proof}

\smallskip

\begin{remark}\label{eq:UspecialJspecial}
Given an $(\epsilon,\epsilon)$ Freudenthal-Kantor triple system, equation \eqref{eq:KKs} shows that the linear subspace $\bF 1+\espan{K(x,y): x,y\in U}$ is a Jordan subalgebra of the special Jordan algebra $\End_\bF(U)^+$ (with product $f\cdot g=\frac{1}{2}(fg+gf)$).
\end{remark}

\medskip

\begin{theorem}\label{th:JT11}
Let $(J,T)$ be a $J$-ternary algebra. Then $T$, endowed with its triple product $\ptriple{x,y,z}$, is a special $(1,1)$ Freudenthal-Kantor triple system.

Conversely, let $(U,xyz)$ be a special $(1,1)$ Freudenthal-Kantor triple system. Let $J$ be the $\bF$ subspace of $\End_{\bF}(U)$ spanned by the identity map and by the operators $K(x,y)$ for $x,y\in U$: $J=\bF 1+K(U,U)$. Then $J$ is a subalgebra of the special Jordan algebra $\End_\bF(U)^+$ (with multiplication $f\cdot g=\frac{1}{2}(fg+gf)$, and the pair $(J,U)$, endowed with the natural action of $J$ on $U$: $a\bullet x=a(x)$, and the multilinear maps:
\[
\begin{split}
\ptriple{.,.,.}:U\times U\times U&\rightarrow U,\quad \ptriple{x,y,z}=xyz,\\
\langle .\vert.\rangle: U\times U&\rightarrow J,\quad \langle x\vert y\rangle =-K(x,y),
\end{split}
\]
is a $J$-ternary algebra.
\end{theorem}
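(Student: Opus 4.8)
The plan is to prove the two implications separately, in each case translating between the Freudenthal--Kantor axioms (together with specialness \eqref{eq:special}) and the six $J$-ternary axioms by means of the operator identities already collected in Lemma~\ref{le:FK1} and Proposition~\ref{pr:Ss}. For the forward implication, start from a $J$-ternary algebra $(J,T)$, put $U:=T$ with triple product $xyz:=\ptriple{x,y,z}$, so that $L(x,y)=\ptriple{x,y,\,\cdot\,}$ and, taking $\delta=1$, $K(x,y)z=\ptriple{x,z,y}-\ptriple{y,z,x}$. By the converse half of Proposition~\ref{pr:Ss}(i) it suffices to verify \eqref{eq:FK1}, \eqref{eq:special} and \eqref{eq:KKs} with $\epsilon=1$. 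Here \eqref{eq:FK1} is exactly \textbf{(JT6)} after the relabelling $(x,y,z,w,v)\mapsto(u,v,x,y,z)$ and rearranging; \textbf{(JT3)} gives $\ptriple{x,z,y}-\ptriple{y,z,x}=-\langle x\vert y\rangle\bullet z$ and \textbf{(JT4)} gives $\ptriple{y,x,z}-\ptriple{x,y,z}=-\langle x\vert y\rangle\bullet z$, so $K(x,y)=L(y,x)-L(x,y)$, which is \eqref{eq:special} and which identifies $K(x,y)$ with $-\lambda_{\langle x\vert y\rangle}$ (where $\lambda_a$ is the action $z\mapsto a\bullet z$); and then \eqref{eq:KKs} follows from \textbf{(JT1)}, since its left-hand side equals $\lambda_{\langle u\vert v\rangle}\lambda_{\langle x\vert y\rangle}+\lambda_{\langle x\vert y\rangle}\lambda_{\langle u\vert v\rangle}=2\lambda_{\langle u\vert v\rangle\cdot\langle x\vert y\rangle}$ by the homomorphism property \eqref{eq:abx}, while its right-hand side equals $\lambda_{\langle\langle u\vert v\rangle\bullet x\vert y\rangle+\langle x\vert\langle u\vert v\rangle\bullet y\rangle}=2\lambda_{\langle u\vert v\rangle\cdot\langle x\vert y\rangle}$ by \textbf{(JT1)}.

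For the converse, let $(U,xyz)$ be a special $(1,1)$ Freudenthal--Kantor triple system. Equation \eqref{eq:KKs} holds by Proposition~\ref{pr:Ss}(i), so by Remark~\ref{eq:UspecialJspecial} the subspace $J=\bF 1+K(U,U)\subseteq\End_\bF(U)$ is a unital Jordan subalgebra of $\End_\bF(U)^+$. With the action $a\bullet x:=a(x)$ the space $U$ is a special unital Jordan module for $J$, because the inclusion $J\hookrightarrow\End_\bF(U)^+$ is a homomorphism of unital Jordan algebras; and $\langle x\vert y\rangle:=-K(x,y)$ is a skew-symmetric $J$-valued bilinear form, since $\delta=1$ forces $K(y,x)=-K(x,y)$. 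Now the six axioms are checked one at a time. As $U$ is special $(1,1)$ one has $K(x,y)=L(y,x)-L(x,y)=T(x,y)$; thus \textbf{(JT3)} is immediate from the definition of $K$, \textbf{(JT4)} from specialness, \textbf{(JT1)} reduces (linearly in the Jordan argument, the $a=1$ case being trivial) to \eqref{eq:KKs}, \textbf{(JT2)} reduces the same way to $[K(u,v),L(x,y)]=L(K(u,v)x,y)-L(x,K(u,v)y)$, which is \eqref{eq:[TL]} with $T=K$, and \textbf{(JT6)} is again \eqref{eq:FK1} rewritten.

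The only axiom requiring a genuine computation is \textbf{(JT5)}, which in operator form reads
\[
K\bigl(L(x,y)z,w\bigr)+K\bigl(z,L(x,y)w\bigr)=-K\bigl(x,K(z,w)y\bigr).
\]
I would expand the left-hand side through $L(x,y)=\tfrac12\bigl(S(x,y)-K(x,y)\bigr)$: the $S$-part contributes $\tfrac12[S(x,y),K(z,w)]$ because $S(x,y)$ is a derivation (equation \eqref{eq:[ST]} with $T=K$), and the $K$-part contributes $-\tfrac12\bigl(K(x,y)K(z,w)+K(z,w)K(x,y)\bigr)$ by \eqref{eq:KKs}. On the right, skew-symmetry of $K$ and \eqref{eq:FK2} give $-K\bigl(x,K(z,w)y\bigr)=K\bigl(K(z,w)y,x\bigr)=L(x,y)K(z,w)-K(z,w)L(y,x)$, and substituting $L(x,y)=\tfrac12\bigl(S(x,y)-K(x,y)\bigr)$ and $L(y,x)=\tfrac12\bigl(S(x,y)+K(x,y)\bigr)$ reproduces precisely the expression obtained on the left. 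I expect this reconciliation of the two sides of \textbf{(JT5)} to be the main obstacle; all the other axioms reduce directly to the definition of $K$, to specialness, and to \eqref{eq:FK1}, \eqref{eq:[TL]} and \eqref{eq:KKs}.
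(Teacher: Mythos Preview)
Your argument is correct. The converse direction is essentially the paper's, only with more detail supplied for \textbf{(JT5)}: the paper simply says it ``follows from \eqref{eq:FK2} and the skew symmetry of $K(.,.)$'', whereas you unpack this via the decomposition $L(x,y)=\tfrac12\bigl(S(x,y)-K(x,y)\bigr)$ and the identities \eqref{eq:[ST]} and \eqref{eq:KKs}. That is the same computation, written out.

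The forward direction, however, is organized differently. The paper verifies \eqref{eq:FK2} directly from \textbf{(JT5)}: it first reads \textbf{(JT5)} as $K\bigl(K(z,w)y,x\bigr)=K\bigl(L(x,y)z,w\bigr)+K\bigl(z,L(x,y)w\bigr)$, then combines this with the general identity $K\bigl(L(x,y)z,w\bigr)+K\bigl(z,L(x,y)w\bigr)=L(x,y)K(z,w)-K(z,w)L(y,x)$ (a consequence of \eqref{eq:FK1} alone) to obtain \eqref{eq:FK2}. You instead invoke the converse half of Proposition~\ref{pr:Ss}(i), so that it suffices to check \eqref{eq:FK1}, \eqref{eq:special}, and \eqref{eq:KKs}; and \eqref{eq:KKs} you derive from \textbf{(JT1)} together with the Jordan-module homomorphism \eqref{eq:abx}, never touching \textbf{(JT5)} or \textbf{(JT2)}. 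This is legitimate and slightly more economical, since Proposition~\ref{pr:Ss}(i) has already absorbed the work of reconstructing \eqref{eq:FK2} from \eqref{eq:KKs}. The paper's route, on the other hand, makes the role of \textbf{(JT5)} explicit and does not rely on the auxiliary Proposition~\ref{pr:Ss}.
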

\begin{proof}
Assume first that $(J,T)$ is a $J$-ternary algebra and consider the space $T$ endowed with the triple product $xyz=\ptriple{x,y,z}$. Take $\epsilon=\delta=1$. Then \eqref{eq:FK1} is equivalent to \textbf{(JT6)}, while $K(x,y)z=\ptriple{x,z,y}-\ptriple{y,z,x}=-\langle x\vert y\rangle\bullet z$ by \textbf{(JT3)}. Then \textbf{(JT4)} shows that $K(x,y)z=-\langle x\vert y\rangle\bullet z=\ptriple{y,x,z}-\ptriple{x,y,z}=\bigl(L(y,x)-L(x,y)\bigr)z$, which gives the speciality condition \eqref{eq:special}. Finally, \textbf{(JT5)} gives
\[
-K\bigl(L(x,y)z,w\bigr)-K\bigl(z,L(x,y)w\bigr)=K\bigl(x,K(z,w)y\bigr),
\]
By skew symmetry we get
\begin{equation}\label{eq:KKKLKL}
K\bigl(K(z,w)y,x\bigr)=K\bigl(L(x,y)z,w\bigr)+K\bigl(z,L(x,y)w\bigr).
\end{equation}
But \eqref{eq:FK1} gives, for any $x,y,z,w,u$:
\[
\begin{split}
L(x,y)K(z,w)u&=L(x,y)\bigl(zuw-wuz)\\
&=K\bigl(L(x,y)z,w\bigr)u+K\bigl(z,L(x,y)w\bigr)u+K(z,w)L(y,x)u,
\end{split}
\]
so that
\[
K\bigl(L(x,y)z,w\bigr)+K\bigl(z,L(x,y)w\bigr)=L(x,y)K(z,w)-K(z,w)L(y,x),
\]
and \eqref{eq:KKKLKL} becomes \eqref{eq:FK2}.

Conversely, let $(U,xyz)$ be a special $(1,1)$ Freudenthal-Kantor triple system, and let $J=\bF 1+K(U,U)$. Then \eqref{eq:KKs} shows that $J$ is a Jordan subalgebra of $\End_{\bF}(U)^+$, and hence $U$ becomes a unital special Jordan module for $J$. Now consider the trilinear product $\ptriple{x,y,z}=xyz$ and the skew symmetric bilinear map $\langle .\vert.\rangle:U\times  U\rightarrow J$ given by $\langle x\vert y\rangle=-K(x,y)$.

Then equation \eqref{eq:KKs} is equivalent to the condition \textbf{(JT1)}, equation \eqref{eq:FK1} and \eqref{eq:special} give \textbf{(JT2)}, equation \textbf{(JT3)} is  a consequence of the definition of the operator $K(x,y)$ in \eqref{eq:LK}, while \textbf{(JT4)} is equivalent to \eqref{eq:special}. Now \textbf{(JT5)} follows from \eqref{eq:FK2} and the skew symmetry of $K(.,.)$, and \textbf{(JT6)} is equivalent to \eqref{eq:FK1}. Therefore, $(J,U)$ is a $J$-ternary algebra.
\end{proof}

\begin{corollary}\label{co:11Lie}
Let $\frg$ be a $BC_1$-graded Lie algebra of type $C_1$ as in \eqref{eq:gslVVd}. Then the vector space $T$, endowed with the triple product \[
\ptriple{x,y,z}=\frac{1}{2}\bigl(-d_{x,y}(z)+\langle x\vert y\rangle\bullet z\bigr)
\]
in \eqref{eq:(xyz)ds}, is a special $(1,1)$ Freudenthal-Kantor triple system.

Conversely, given a special $(1,1)$ Freudenthal-Kantor triple system $(U,xyz)$, let $\frd$ be the linear span of the derivations $S(x,y)=L(x,y)+L(y,x)$ in \eqref{eq:STs} (which is a subalgebra of the Lie algebra of derivations of $\der U$ by Lemma \ref{le:FK1}) and let $J$ be the Jordan subalgebra $\bF 1+K(U,U)$ of $\End_{\bF}(U)^+$. Then the vector space
\[
\frg=\bigl(\frsl(V)\otimes J\bigr)\oplus\bigl(V\otimes U\bigr)\oplus\frd,
\]
with Lie bracket given by the bracket in $\frd$ and, for any $f,g\in\frsl(V)$, $u,v\in V$, $a,b\in J$, $x,y\in U$ and $\varphi\in \frd$, by the formulae:
\begin{itemize}
\item $[f\otimes a,g\otimes b]=[f,g]\otimes \frac{1}{2}(ab+ba)\, +\, \frac{1}{2}\trace(fg)[a,b]$,
\item $[f\otimes a,u\otimes x]=f(u)\otimes a(x)$,
\item $[u\otimes x,v\otimes y]=-\gamma_{u,v}\otimes K(x,y)\,-\, (u\vert v)\bigl(L(x,y)+L(y,x)\bigr)$,
\item $[\varphi,f\otimes a]=f\otimes \varphi(a)$,
\item $[\varphi,u\otimes x]=u\otimes \varphi(x)$,
\end{itemize}
is a $BC_1$-graded Lie algebra of type $C_1$.
\end{corollary}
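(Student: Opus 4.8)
The plan is to deduce the statement by chaining together the results already in hand, with essentially no new computation. For the direct implication I would start from a $BC_1$-graded Lie algebra $\frg$ of type $C_1$ and invoke Theorem \ref{th:gBC1}, which presents $\frg$ (up to isomorphism) in the form \eqref{eq:gslVVd} with bracket \eqref{eq:g[]} and supplies a unital Jordan algebra $(J,\cdot)$, a special unital Jordan module $(T,\bullet)$, a Lie algebra $\frd$ acting on $J$ and $T$, and $\frd$-invariant maps $D_{.,.}$, $\langle.\vert.\rangle$, $d_{.,.}$ satisfying \eqref{eq:Ds}, \eqref{eq:Dabx}, \eqref{eq:Daxy}, \eqref{eq:axy}, \eqref{eq:dxya}, \eqref{eq:dxyzdzyx}, with $\frd$ acting by derivations of $(J\oplus T,\diamond)$. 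These are exactly the hypotheses of Theorem \ref{th:JT}, so $(J,T)$ becomes a $J$-ternary algebra with triple product $\ptriple{x,y,z}$ as in \eqref{eq:(xyz)ds}. The direct half of Theorem \ref{th:JT11} then says precisely that $T$ with this triple product is a special $(1,1)$ Freudenthal--Kantor triple system, which finishes this direction.

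For the converse I would begin with a special $(1,1)$ Freudenthal--Kantor triple system $(U,xyz)$ and apply the converse half of Theorem \ref{th:JT11} to obtain the $J$-ternary algebra $(J,U)$ with $J=\bF 1+K(U,U)\subseteq\End_\bF(U)^+$, action $a\bullet x=a(x)$, triple product $\ptriple{x,y,z}=xyz$ and skew form $\langle x\vert y\rangle=-K(x,y)$. Feeding this $J$-ternary algebra into Corollary \ref{co:gJT} yields a $BC_1$-graded Lie algebra of type $C_1$ equal, as a vector space, to $(\frsl(V)\otimes J)\oplus(V\otimes U)\oplus\frd$ with $\frd=\espan{D_{a,b},d_{x,y}}$ (the $D$'s and $d$'s defined by \eqref{eq:defD}, \eqref{eq:defd}) and bracket \eqref{eq:g[]}. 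It then remains to recognize this as the algebra described in the statement, i.e.\ to rewrite \eqref{eq:g[]} as the five displayed brackets and to identify $\frd$ with the span of the derivations $S(x,y)$.

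The hard part, such as it is, is this last identification, which I would carry out through three translations. (1) Expanding \eqref{eq:defD} inside $\End_\bF(U)$, where $J$ is a special Jordan subalgebra, gives $D_{a,b}=\frac{1}{4}[a,b]$ (acting as $\frac{1}{4}(ab-ba)$ on $U$ and as $\frac{1}{4}\ad_{[a,b]}$ on $J$), so that $[f,g]\otimes a\cdot b+2\trace(fg)D_{a,b}=[f,g]\otimes\frac{1}{2}(ab+ba)+\frac{1}{2}\trace(fg)[a,b]$. (2) For a special $(1,1)$ triple system \eqref{eq:special} reads $K(x,y)=L(y,x)-L(x,y)$, hence $K$ coincides with the operator $T$ of Lemma \ref{le:FK1}; substituting $\langle x\vert y\rangle=-K(x,y)$ and $\ptriple{x,y,z}=xyz$ into \eqref{eq:defd} gives $d_{x,y}=-K(x,y)-2L(x,y)=-\bigl(L(x,y)+L(y,x)\bigr)=-S(x,y)$ as operators on $U$; since $1\in J$ is the unit of $(J\oplus U,\diamond)$ and $K(U,U)$ is spanned by the $J$-components of $\diamond$-products of elements of $U$, a derivation of $(J\oplus U,\diamond)$ is determined by its restriction to $U$, so this equality already holds in $\frd$. (3) Using $\langle x\vert y\rangle=-K(x,y)$ once more turns $\gamma_{u,v}\otimes\langle x\vert y\rangle+(u\vert v)d_{x,y}$ into $-\gamma_{u,v}\otimes K(x,y)-(u\vert v)\bigl(L(x,y)+L(y,x)\bigr)$. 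Combining (1)--(3), \eqref{eq:g[]} becomes exactly the list in the statement; moreover $\espan{d_{x,y}}=\espan{S(x,y)}$, and \eqref{eq:[TT]} (with $\epsilon=1$ and $T=K$) gives $4D_{K(u,v),K(x,y)}=[K(u,v),K(x,y)]=-S(K(u,v)x,y)+S(x,K(u,v)y)$, while $D_{1,b}=0$, so every $D_{a,b}$ lies in $\espan{S(x,y)}$; therefore $\frd=\espan{D_{a,b},d_{x,y}}$ is precisely the span of the $S(x,y)$.

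All told, the only genuine work lies in the bookkeeping of the third step --- the identities $D_{a,b}=\frac{1}{4}[a,b]$ and $d_{x,y}=-S(x,y)$, and the verification that the two presentations of $\frd$ agree as Lie algebras (same underlying space, same bracket); the rest is an immediate appeal to Theorems \ref{th:gBC1}, \ref{th:JT}, \ref{th:JT11} and Corollary \ref{co:gJT}, and I foresee no hidden obstruction.
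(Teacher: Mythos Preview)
Your proposal is correct and follows essentially the same route as the paper: both arguments reduce the corollary to the chain Theorem~\ref{th:gBC1} $\Rightarrow$ Theorem~\ref{th:JT} $\Rightarrow$ Theorem~\ref{th:JT11} for the direct implication, and Theorem~\ref{th:JT11} $\Rightarrow$ Corollary~\ref{co:gJT} for the converse, the only actual work being the identifications $D_{a,b}=\tfrac14[a,b]$ and $d_{x,y}=-S(x,y)$ (on $U$, hence on $J\oplus U$ by the derivation property). Your proof is in fact slightly more explicit than the paper's in one respect: you verify via \eqref{eq:[TT]} that each $D_{a,b}$ already lies in $\espan{S(x,y)}$, so that the subalgebra $\frd$ of Corollary~\ref{co:gJT} coincides with the one in the statement, a point the paper leaves implicit.
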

\begin{proof}
It is enough to note that if $(U,xyz)$ is a special $(1,1)$ Freudenthal-Kantor triple system and $(J,U)$ is the associated $J$-ternary algebra, with $J=\bF 1+K(U,U)$, then for $x,y,z\in U$, and $a,b,c\in J$:
\[
\begin{split}
d_{x,y}(z)&=\langle x\vert y\rangle\bullet z-2\ptriple{x,y,z}\\
&=\bigl(-K(x,y)-2L(x,y)\bigr)(z)\\
&=-\bigl(L(x,y)+L(y,x)\bigr)(z)\\[4pt]
d_{x,y}(a)&=\langle a\bullet x\vert y\rangle-\langle x\vert a\bullet y\rangle\\
&=-K\bigl(a(x),y\bigr)-K\bigl(a(y),x\bigr)\\
&=-[L(x,y)+L(y,x),a]\quad\text{(because of \eqref{eq:[TS]})}\\[4pt]
D_{a,b}(x)&=\frac{1}{4}\bigl(a(b(x))-b(a(x))\bigr)=\frac{1}{4}[a,b](x),\\
D_{a,b}(c)&=a\cdot(b\cdot c)-b\cdot(a\cdot c)=\frac{1}{4}[[a,b],c].
\end{split}
\]
(Note that we have $4\Bigl(a\cdot(b\cdot c)-b\cdot (a\cdot c)\Bigr)=a(bc+cb)+(bc+cb)a-b(ac+ca)-(ac+ca)b=(ab-ba)c-c(ab-ba)=[[a,b],c]$.)
\end{proof}

\begin{remark}
One may change the skew symmetric bilinear form on $V$ to its negative and hence change the Lie bracket accordingly. In this way one has $[u\otimes x,v\otimes y]=\gamma_{u,v}\otimes K(x,y)+(u\vert v)\bigl(L(x,y)+L(y,x)\bigr)$, which is nicer than the bracket above.
\end{remark}

\begin{remark}
Let $(U,xyz)$ be a special $(1,1)$ Freudenthal-Kantor triple system and let $\frg$ be the attached Lie algebra as in Corollary \ref{co:11Lie}. Then the subspace $\bigl(\frsl(V)\otimes K(U,U)\bigr)\oplus \bigl(V\otimes U\bigr)\oplus \frd$ is an ideal of $\frg$. Hence if $\frg$ is simple we get $J=K(U,U)$, so that $(U,xyz)$ is unitary. Moreover, if $\dim J=1$, then $(U,xyz)$ is balanced.
\end{remark}

\bigskip

\section{Superalgebras}\label{se:super}

Any $BC_1$-graded Lie algebra of type $C_1$ (as in \eqref{eq:gslVVd}) is $\bZ_2$-graded, with
\begin{equation}\label{eq:g0g1}
\frg\subo=\bigl(\frsl(V)\otimes J\bigr)\oplus\frd,\qquad \frg\subuno=V\otimes T.
\end{equation}

In a similar vein we may consider Lie superalgebras $\frg=\frg\subo\oplus\frg\subuno$ such that $\frg\subo$ contains a subalgebra isomorphic to $\frsl(V)$ so that $\frg\subo$ is a direct sum of copies of the adjoint module for $\frsl(V)$ and copies of the trivial module, and $\frg\subuno$ is a direct sum of copies of the two-dimensional module $V$. These will be called \emph{strictly $BC_1$-graded Lie superalgebras of type $C_1$}.

The arguments in the two previous sections apply here. The bracket on the superalgebra $\frg$ is given by \eqref{eq:g[]} for suitable bilinear maps as in \eqref{eq:maps} with the difference that $\langle.\vert.\rangle:T\times T\rightarrow J$ is symmetric here, while $d_{.,.}:T\times T\rightarrow \frd$ is skew symmetric.

Theorem \ref{th:gBC1} has the following counterpart (whose proof is obtained by superizing the arguments there):

\begin{theorem}\label{th:gsuperBC1}
Let $\frg$ be a Lie superalgebra. Then $\frg$ is strictly $BC_1$-graded of type $C_1$ if and only if there is a two-dimensional vector space $V$ such that that $\frg$ is, up to isomorphism, the Lie superalgebra in \eqref{eq:gslVVd}, with $\frg\subo=\bigl(\frsl(V)\otimes J\bigr)\oplus\frd$ and $\frg\subuno=V\otimes T$, and Lie bracket as in \eqref{eq:g[]}, for a unital Jordan algebra $(J,\cdot)$, a special unital Jordan module $T$ for $J$ (with action denoted by $a\bullet x$ for $a\in J$ and $x\in T$), and a Lie subalgebra $\frd$ of $\frg$, endowed with $\frd$-invariant bilinear maps:
\[
\begin{split}
J\times J\rightarrow \frd,&\quad (a,b)\mapsto D_{a,b}\quad\text{(skew symmetric),}\\
T\times T\rightarrow J,&\quad (x,y)\mapsto \langle x\vert y\rangle\quad\text{(symmetric),}\\
T\times T\rightarrow \frd,&\quad (x,y)\mapsto d_{x,y}\quad\text{(skew symmetric),}\\
\frd\times J\rightarrow J,&\quad (d,a)\mapsto d(a),\\
\frd\times T\rightarrow T,&\quad (d,x)\mapsto d(x),
\end{split}
\]
satisfying the following equations for any $a,b,c\in J$ and $x,y,z\in T$:
\begin{align}
&D_{a\cdot b,c}+D_{b\cdot c,a}+D_{c\cdot a,b}=0,\ D_{a,b}(c)=a\cdot(b\cdot c)-b\cdot(a\cdot c),\label{eq:superDs}\\
&4D_{a,b}(x)=a\bullet(b\bullet x)-b\bullet(a\bullet x),\label{eq:superDabx}\\
&4D_{a,\langle x\vert y\rangle}=-d_{a\bullet x,y}+d_{x,a\bullet y},\label{eq:superDaxy}\\
&2a\cdot\langle x\vert y\rangle=\langle a\bullet x\vert y\rangle+\langle x\vert a\bullet y\rangle,\label{eq:superaxy}\\
&d_{x,y}(a)=\langle a\bullet x\vert y\rangle-\langle x\vert a\bullet y\rangle,\label{eq:superdxya}\\
&d_{x,y}(z)-d_{y,z}(x)=\langle x\vert y\rangle\bullet z+\langle y\vert z\rangle\bullet x-2\langle z\vert x\rangle\bullet y,\label{eq:superdxyzdzyx}
\end{align}
and such that for any $d\in \frd$, the map $J\oplus T\rightarrow J\oplus T$, $a+x\mapsto d(a)+d(x)$ is a derivation of the algebra $J\oplus T$, with the product given by the formula \eqref{eq:J+T}.
\end{theorem}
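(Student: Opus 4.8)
The plan is to mimic, step by step, the computation preceding Theorem \ref{th:gBC1}, keeping track of the Koszul signs forced by the odd grading. First one recovers the decomposition: if $\frg$ is strictly $BC_1$-graded of type $C_1$, then $\frg\subo$ contains the distinguished copy $\frsl(V)=\frsl(V)\otimes 1$ and decomposes under it as a sum of adjoint and trivial modules, while $\frg\subuno$ is a sum of copies of $V$; hence, as a module over this $\frsl(V)$, $\frg=\bigl(\frsl(V)\otimes J\bigr)\oplus\bigl(V\otimes T\bigr)\oplus\frd$ exactly as in \eqref{eq:gslVVd}, with $\frsl(V)\otimes J\subseteq\frg\subo$, $V\otimes T=\frg\subuno$, and $\frd=\Centr_\frg(\frsl(V)\otimes 1)$ the sum of the trivial submodules. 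Since $\frg\subuno$ has no trivial submodule, $\frd\subseteq\frg\subo$, so $\frd$ is an even subalgebra (a centraliser is always a subalgebra), and $1\in J$, $1\cdot a=a$, $1\bullet x=x$, $D_{1,a}=0$ as before. By $\frsl(V)$-equivariance and Lemma \ref{le:invariantV}, the bracket of $\frg$ is again forced to have the shape \eqref{eq:g[]} for bilinear maps as in \eqref{eq:maps}. The one new input is that the bracket of two elements of $\frg\subuno=V\otimes T$ is now \emph{supersymmetric}, i.e.\ $[u\otimes x,v\otimes y]=[v\otimes y,u\otimes x]$; since $\gamma_{v,u}=\gamma_{u,v}$ and $(v\vert u)=-(u\vert v)$, this forces $\langle .\vert .\rangle$ to be symmetric and $d_{.,.}$ to be skew symmetric, the opposite of the Lie algebra case.

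With the bracket pinned down, one imposes the graded Jacobi identity, organised as in Section \ref{se:BC1C1Jternary} by how many of the three arguments lie in $V\otimes T$. When at most one argument is odd, one may place the even factor(s) first, so the Koszul sign in $[X,[Y,Z]]=[[X,Y],Z]+(-1)^{|X||Y|}[Y,[X,Z]]$ is trivial and the three resulting cases reproduce verbatim the computations of Section \ref{se:BC1C1Jternary}: three arguments in $\frsl(V)\otimes J$ give \eqref{eq:superDs} (equivalently, $J$ is a Jordan algebra and $D_{a,b}$ is its inner derivation); two in $\frsl(V)\otimes J$ and one in $V\otimes T$ give \eqref{eq:superDabx} together with \eqref{eq:abx}, i.e.\ $T$ is a special unital Jordan module for $J$; one in $\frsl(V)\otimes J$ and two in $V\otimes T$ give \eqref{eq:superDaxy}, \eqref{eq:superaxy}, \eqref{eq:superdxya}, which are literally the same equations as \eqref{eq:Daxy}, \eqref{eq:axy}, \eqref{eq:dxya}, as those manipulations never used the symmetry type of $\langle .\vert .\rangle$ or $d_{.,.}$. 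The one case where the grading genuinely intervenes is three arguments in $V\otimes T$: there the graded Jacobi identity reads $\sum_{\mathrm{cyclic}}[[X,Y],Z]=0$ (the common factor $(-1)^{|X||Z|}=-1$ cancelling) but with $[V\otimes T,V\otimes T]$ now supersymmetric, and repeating the specialisation of the last bullet of Section \ref{se:BC1C1Jternary} --- take $u,v\in V$ with $(u\vert v)=1$, $w=u$, and evaluate the relevant $\gamma$'s on $V$ --- yields precisely \eqref{eq:superdxyzdzyx}.

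As in Section \ref{se:BC1C1Jternary}, the Jacobi identities involving an element of $\frd$ then force the $\frd$-invariance of all the bilinear maps and the fact that the action of $\frd$ on $J\oplus T$ is by derivations of the algebra $(J\oplus T,\diamond)$ in \eqref{eq:J+T}; this proves the ``only if'' part. For the converse one builds $\frg$ from the given data via \eqref{eq:gslVVd}, \eqref{eq:g[]} and the grading \eqref{eq:g0g1}: supersymmetry of the bracket is immediate from the prescribed symmetries of $\cdot$, $\langle .\vert .\rangle$, $D_{.,.}$ and $d_{.,.}$, and the graded Jacobi identity follows by reading the four cases above backwards, the identities involving an element of $\frd$ holding by the hypotheses that the maps are $\frd$-invariant and that $\frd$ acts by derivations of $(J\oplus T,\diamond)$.

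The only genuinely delicate point is the sign bookkeeping in the all-odd case: one must verify that the \emph{two} changes relative to Section \ref{se:BC1C1Jternary} --- the overall sign in the graded Jacobi identity, and the replacement of skew symmetry by supersymmetry of $[V\otimes T,V\otimes T]$ --- conspire to give exactly \eqref{eq:superdxyzdzyx} and not some other relation. A convenient consistency check is that the identity obtained \emph{before} invoking any symmetry of $\langle .\vert .\rangle$ or $d_{.,.}$ is the same expression here as in Section \ref{se:BC1C1Jternary}, and that it is only the substitution of the (now reversed) symmetry relations which turns \eqref{eq:dxyzdzyx} into \eqref{eq:superdxyzdzyx}. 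Everything else is the word-for-word superization of Section \ref{se:BC1C1Jternary} announced in the statement.
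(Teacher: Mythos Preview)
Your proposal is correct and is precisely the approach the paper takes: the paper states only that the proof ``is obtained by superizing the arguments'' of Section~\ref{se:BC1C1Jternary}, and you have carried out that superization, correctly identifying that the only nontrivial change is the reversal of the symmetry types of $\langle .\vert .\rangle$ and $d_{.,.}$ forced by the supersymmetry of the bracket on $\frg\subuno$, and that the raw cyclic identity from the all-odd Jacobi case is unchanged and specializes to \eqref{eq:superdxyzdzyx} under the new symmetries.
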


\smallskip

The definition of a $J$-ternary algebra is changed to:

\begin{definition}  Let $J$ be a unital Jordan algebra with multiplication $a\cdot b$. Let $T$ be a unital special Jordan module for $J$ with action $a\bullet x$ for $a\in J$ and $x\in T$. Assume $\langle.\vert.\rangle:T\times T\rightarrow J$ is a symmetric bilinear map and $\ptriple{.,.,.}:T\times T\times T\rightarrow T$ is a trilinear product on $T$. Then the pair $(J,T)$ is called a \emph{$J$-ternary $(-1)$-algebra} if the following axioms hold for any $a\in J$ and $x,y,z,w,v\in T$:
\begin{description}
\addtolength{\itemindent}{-22pt}
\item[(-JT1)] $a\cdot\langle x\vert y\rangle =\dfrac{1}{2}\Bigl(\langle a\bullet x\vert y\rangle +\langle x\vert a\bullet y\rangle\Bigr)$,

\item[(-JT2)] $a\bullet\ptriple{ x,y,z}=\ptriple{ a\bullet x,y,z}-\ptriple{ x,a\bullet y,z} +\ptriple{x,y,a\bullet z}$,

\item[(-JT3)] $\ptriple{x,y,z}+\ptriple{z,y,x}=\langle x\vert z\rangle\bullet y$,

\item[(-JT4)] $\ptriple{x,y,z}+\ptriple{y,x,z}=\langle x\vert y\rangle\bullet z$,

\item[(-JT5)] $\langle\ptriple{x,y,z}\vert w\rangle+\langle z\vert\ptriple{x,y,w}\rangle =\langle x\vert\langle z\vert w\rangle\bullet y\rangle$,

\item[(-JT6)] $\ptriple{x,y,\ptriple{z,w,v}}=\ptriple{\ptriple{x,y,z},w,v}-
    \ptriple{z,\ptriple{y,x,w},v}+\ptriple{z,w,\ptriple{x,y,w}}$.
\end{description}
\end{definition}

\medskip

Now Theorem \ref{th:JT} has the following counterpart:

\begin{theorem}\label{th:-JT}
Let $(J,\cdot)$ be a unital Jordan algebra, let $(T,\bullet)$ be a special unital Jordan module for $(J,\cdot)$, and let $\frd$ be a Lie algebra acting on both vector spaces $J$ and $T$ (that is, both $J$ and $T$ are modules for $\frd$). Assume $D_{.,.}:J\times J\rightarrow\frd$, $(a,b)\mapsto D_{a,b}$, and $d_{.,.}:T\times T\rightarrow \frd $, $(x,y)\mapsto d_{x,y}$, are two skew symmetric $\frd$-invariant bilinear maps,
$\langle .\vert .\rangle:T\times T\rightarrow J$, $(x,y)\mapsto \langle x\vert y\rangle$, is a symmetric $\frd$-invariant bilinear map, and
the action of $\frd$ on the direct sum $J\oplus T$ is an action by derivations of the algebra $(J\oplus T,\diamond)$ in \eqref{eq:J+T}. Then if \eqref{eq:superDs}, \eqref{eq:superDabx}, \eqref{eq:superDaxy}, \eqref{eq:superaxy}, \eqref{eq:superdxya} and \eqref{eq:superdxyzdzyx} are satisfied, the pair $(J,T)$ becomes a $J$-ternary $(-1)$-algebra with the triple product $\ptriple{.,.,.}:T\times T\times T\rightarrow T$ given by \eqref{eq:(xyz)ds}:
\[
\ptriple{x,y,z}=\frac{1}{2}\Bigl(-d_{x,y}(z)+\langle x\vert y\rangle\bullet z\Bigr).
\]

Conversely, if $(J,T)$ is a $J$-ternary $(-1)$-algebra, then the image of the bilinear maps
\[
D_{.,.}:J\times J\rightarrow \End_{\bF}(J\oplus T),\ (a,b)\mapsto D_{a,b},
\]
where, as in \eqref{eq:defD}:
\[
\begin{split}
D_{a,b}(c)&=a\cdot(b\cdot c)-b\cdot(a\cdot c),\\
D_{a,b}(x)&=\frac{1}{4}\Bigl(a\bullet(b\bullet x)-b\bullet (a\bullet x)\Bigr),
\end{split}
\]
for $a,b,c\in J$ and $x\in T$, and
\[
d_{.,.}:T\times T\rightarrow \End_{\bF}(J\oplus T),\ (x,y)\mapsto d_{x,y},
\]
where, as in \eqref{eq:defd}
\[
\begin{split}
d_{x,y}(a)&=\langle a\bullet x\vert y\rangle -\langle x\vert a\bullet y\rangle,\\
d_{x,y}(z)&=\langle x\vert y\rangle\bullet z-2\ptriple{x,y,z},
\end{split}
\]
for $a\in J$ and $x,y,z\in T$, is contained in the Lie algebra of derivations of the algebra $(J\oplus T,\diamond)$ defined in \eqref{eq:J+T}, and the equations \eqref{eq:superDs}, \eqref{eq:superDaxy}, \eqref{eq:superaxy} and \eqref{eq:superdxyzdzyx} are satisfied.
\end{theorem}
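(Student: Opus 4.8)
The plan is to mirror, almost verbatim, the proof of Theorem~\ref{th:JT}, making the systematic sign changes dictated by the superization: the skew-symmetric bilinear map $\langle\cdot\vert\cdot\rangle$ becomes symmetric, the symmetric map $d_{\cdot,\cdot}$ becomes skew-symmetric, and the sign of the middle term in the Jacobi/$(-1)$-Jacobi identities flips accordingly. Concretely, for the first (``if'') direction I would assume the data $(J,\cdot)$, $(T,\bullet)$, $\langle\cdot\vert\cdot\rangle$, $D_{\cdot,\cdot}$, $d_{\cdot,\cdot}$ with the stated symmetry properties and equations \eqref{eq:superDs}--\eqref{eq:superdxyzdzyx}, define $\ptriple{x,y,z}$ by \eqref{eq:(xyz)ds}, and verify \textbf{(-JT1)}--\textbf{(-JT6)} one at a time. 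As in the original proof, \textbf{(-JT1)} is just \eqref{eq:superaxy}; \textbf{(-JT3)} is the rewriting of \eqref{eq:superdxyzdzyx} (here the cyclic-type identity on the right of \eqref{eq:superdxyzdzyx} is designed precisely so that, upon substituting \eqref{eq:(xyz)ds} and using skew-symmetry of $d_{\cdot,\cdot}$, one lands on $\ptriple{x,y,z}+\ptriple{z,y,x}=\langle x\vert z\rangle\bullet y$); and \textbf{(-JT4)} now follows from the \emph{skew}-symmetry of $d_{\cdot,\cdot}$ together with the \emph{symmetry} of $\langle\cdot\vert\cdot\rangle$, which is exactly the swap of the two symmetry hypotheses relative to Theorem~\ref{th:JT}.

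Next I would reproduce the two displayed computations from the proof of Theorem~\ref{th:JT} verbatim in structure. The computation of $a\bullet\ptriple{x,y,z}-\ptriple{x,y,a\bullet z}$ goes through unchanged, using \eqref{eq:superDabx}, the derivation property of $d_{x,y}$, \eqref{eq:superDaxy} and \eqref{eq:superdxya}, and yields \textbf{(-JT2)} (the signs here involve $D$ and $d$ only, not $\langle\cdot\vert\cdot\rangle$, so nothing changes). The computation giving \textbf{(-JT5)} also carries over: expanding $\langle\ptriple{x,y,z}\vert w\rangle+\langle z\vert\ptriple{x,y,w}\rangle$ via \eqref{eq:(xyz)ds}, using \eqref{eq:superaxy} and the derivation property to collapse the $d_{x,y}$ terms, and then \eqref{eq:superdxya} and \eqref{eq:superaxy} again; one should just double-check that the symmetric/skew bookkeeping of $\langle\cdot\vert\cdot\rangle$ and $d_{\cdot,\cdot}$ leaves \textbf{(-JT5)} with the same shape (it does, since \textbf{(-JT5)} itself has the same form as \textbf{(JT5)}). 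Finally \textbf{(-JT6)} follows, as before, from \textbf{(-JT2)}, the $\frd$-invariance of $\ptriple{\cdot,\cdot,\cdot}$, and the symmetry of $\langle\cdot\vert\cdot\rangle$ — here the sign change $\ptriple{\ptriple{x,y,z},w,v}-\ptriple{z,\ptriple{y,x,w},v}+\cdots$ in \textbf{(-JT6)} (versus the all-plus \textbf{(JT6)}) is exactly what is produced when one commutes the action of $d_{x,y}$ past $\ptriple{\cdot,\cdot,\cdot}$ using the skew-symmetric $d$ in place of the symmetric one.

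For the converse (``only if'') direction I would set $\frd=\der(J\oplus T,\diamond)$, define $D_{\cdot,\cdot}$ and $d_{\cdot,\cdot}$ by \eqref{eq:defD} and \eqref{eq:defd}, observe that both are $\frd$-invariant by construction, and then check: (a) $D_{a,b}\in\frd$, which is identical to the original argument since it only uses that $J$ is Jordan, $T$ is a special module, and \textbf{(-JT1)} (which is literally \textbf{(JT1)}); (b) $d_{x,y}\in\frd$, where the derivation-of-$J$ part again uses \textbf{(-JT1)}, the compatibility with $\langle\cdot\vert\cdot\rangle$ uses \textbf{(-JT5)} rewritten exactly as in the original proof, and the compatibility with $\bullet$ uses \textbf{(-JT2)} and \textbf{(-JT4)}; (c) that $D_{\cdot,\cdot}$ is skew and $d_{\cdot,\cdot}$ is skew — for $d$, \textbf{(-JT4)} gives $d_{x,y}(z)=-\bigl(\ptriple{x,y,z}+\ptriple{y,x,z}\bigr)+\langle x\vert y\rangle\bullet z$... wait, more simply $d_{x,y}(z)=\langle x\vert y\rangle\bullet z-2\ptriple{x,y,z}=-\ptriple{x,y,z}+\ptriple{y,x,z}$ by \textbf{(-JT4)}, which is manifestly skew in $x,y$, and on $J$ skewness of $d_{x,y}(a)$ is immediate from \eqref{eq:defd}; and (d) the remaining identities \eqref{eq:superDs}, \eqref{eq:superDaxy}, \eqref{eq:superaxy}, \eqref{eq:superdxyzdzyx}, where \eqref{eq:superDs} is Jordanness of $J$, \eqref{eq:superaxy} is \textbf{(-JT1)}, \eqref{eq:superdxyzdzyx} is \textbf{(-JT3)} plus \textbf{(-JT4)} (this is the one recombination that genuinely differs in sign from the $BC_1$ case and is worth writing out), and \eqref{eq:superDaxy} follows from \textbf{(-JT1)} on $J$ and from \textbf{(-JT2)} together with \textbf{(-JT4)} on $T$ by the same two-line manipulation as in Theorem~\ref{th:JT}.

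The main obstacle is purely bookkeeping: one must be scrupulous that every place where the original proof invoked ``skew-symmetry of $\langle\cdot\vert\cdot\rangle$'' or ``symmetry of $d_{\cdot,\cdot}$'' gets replaced by the opposite symmetry, and that the resulting sign is consistent with the flipped middle sign in \textbf{(-JT3)}, \textbf{(-JT4)}, \textbf{(-JT6)} and in \eqref{eq:superdxyzdzyx}. I expect the recombination of \eqref{eq:superdxyzdzyx} into \textbf{(-JT3)}$+$\textbf{(-JT4)} and the derivation of \eqref{eq:superDaxy} on $T$ to be the two spots where a careless sign would slip in, so those are the steps I would write out in full; everything else is word-for-word the proof of Theorem~\ref{th:JT} with the substitution ``$(J$-ternary$)\leadsto(J$-ternary $(-1))$''. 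Accordingly the proof can be kept short, essentially indicating the dictionary of changes and spelling out only the two sign-sensitive computations.
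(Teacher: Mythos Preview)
Your proposal is correct and is exactly the approach the paper takes: the paper gives no explicit proof of this theorem, stating only that it is the counterpart of Theorem~\ref{th:JT} obtained by superizing the arguments there. Your plan to mirror that proof while swapping the symmetry hypotheses on $\langle\cdot\vert\cdot\rangle$ and $d_{\cdot,\cdot}$ and tracking the resulting sign changes in \textbf{(-JT3)}, \textbf{(-JT4)}, \textbf{(-JT6)} and \eqref{eq:superdxyzdzyx} is precisely what is intended, and your identification of the two sign-sensitive spots (the equivalence of \eqref{eq:superdxyzdzyx} with \textbf{(-JT3)}$+$\textbf{(-JT4)}, and \eqref{eq:superDaxy} on $T$) is on target.
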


Finally, Theorem \ref{th:JT11} becomes:

\begin{theorem}\label{th:-JT-1-1}
Let $(J,T)$ be a $J$-ternary $(-1)$-algebra. Then $T$, endowed with its triple product $\ptriple{x,y,z}$, is a special $(-1,-1)$ Freudenthal-Kantor triple system.

Conversely, let $(U,xyz)$ be a special $(-1,-1)$ Freudenthal-Kantor triple system. Let $J$ be the $\bF$ subspace of $\End_{\bF}(U)$ spanned by the identity map and by the operators $K(x,y)$ for $x,y\in U$. Then $J$ is a subalgebra of the special Jordan algebra $\End_\bF(U)^+$ (with multiplication $f\cdot g=\frac{1}{2}(fg+gf)$), and the pair $(J,U)$, endowed with the natural action of $J$ on $U$: $a\bullet x=a(x)$, and the multilinear maps:
\[
\begin{split}
\ptriple{.,.,.}:U\times U\times U&\rightarrow U,\quad \ptriple{x,y,z}=xyz,\\
\langle .\vert.\rangle: U\times U&\rightarrow J,\quad \langle x\vert y\rangle =K(x,y),
\end{split}
\]
is a $J$-ternary $(-1)$-algebra.
\end{theorem}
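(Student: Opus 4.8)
The plan is to parallel the proof of Theorem \ref{th:JT11} exactly, using the $(-1)$-superized versions of the lemmas. For the forward direction, I would start with a $J$-ternary $(-1)$-algebra $(J,T)$ and equip $T$ with the triple product $xyz=\ptriple{x,y,z}$, taking $\epsilon=\delta=-1$. Then by \eqref{eq:LK} the operator $K(x,y)$ acts by $K(x,y)z=\ptriple{x,z,y}+\ptriple{y,z,x}$, which by \textbf{(-JT3)} equals $\langle x\vert z\rangle\bullet y$; comparing with \textbf{(-JT4)} (applied so as to rewrite $\ptriple{y,x,z}=\langle x\vert y\rangle\bullet z-\ptriple{x,y,z}$) one gets $K(x,y)z=\langle x\vert y\rangle\bullet z=\bigl(L(y,x)-\epsilon L(x,y)\bigr)z$ with $\epsilon=-1$, i.e.\ the speciality identity \eqref{eq:special}. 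Axiom \textbf{(-JT6)} is exactly \eqref{eq:FK1} with $\epsilon=-1$ (note the sign change in \textbf{(-JT6)} is what matches the $\epsilon=-1$ sign in \eqref{eq:FK1}), and then \eqref{eq:FK2} is derived from \textbf{(-JT5)} together with \eqref{eq:FK1} by the same two-line manipulation used in Theorem \ref{th:JT11}: \textbf{(-JT5)} gives $K\bigl(K(z,w)y,x\bigr)=K\bigl(L(x,y)z,w\bigr)+K\bigl(z,L(x,y)w\bigr)$ (now using the symmetry of $\langle.\vert.\rangle$, hence of $K$, in place of skew symmetry), and \eqref{eq:FK1} rewrites the right-hand side as $L(x,y)K(z,w)-\epsilon K(z,w)L(y,x)$, yielding \eqref{eq:FK2}.

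For the converse, I would take a special $(-1,-1)$ Freudenthal-Kantor triple system $(U,xyz)$ and set $J=\bF 1+K(U,U)\subseteq\End_\bF(U)^+$. First I must check $J$ is a Jordan subalgebra: this is where Proposition \ref{pr:Ss}(i) enters, since it gives the anticommutator identity \eqref{eq:KKs} for special $(\epsilon,\epsilon)$ systems with $\epsilon=-1$, so $J$ is closed under the Jordan product and $U$ becomes a unital special Jordan module for $J$. Then with $\langle x\vert y\rangle=K(x,y)$ (symmetric now, since for $\epsilon=-1$ the operator $K$ is symmetric by \eqref{eq:LK}) and $\ptriple{x,y,z}=xyz$, I verify the six axioms: \textbf{(-JT1)} is \eqref{eq:KKs}; \textbf{(-JT2)} follows from \eqref{eq:FK1} and \eqref{eq:special}; \textbf{(-JT3)} is the definition of $K(x,y)$ in \eqref{eq:LK} with $\delta=-1$ rewritten via \eqref{eq:special}; \textbf{(-JT4)} is the speciality identity \eqref{eq:special} itself; \textbf{(-JT5)} comes from \eqref{eq:FK2} together with the symmetry of $K$; and \textbf{(-JT6)} is \eqref{eq:FK1}. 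Each verification is the sign-adjusted twin of the corresponding step in Theorem \ref{th:JT11}.

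The only genuine obstacle is bookkeeping of signs: in the $J$-ternary $(-1)$ setting $\langle.\vert.\rangle$ and hence $K$ are \emph{symmetric} while $d_{.,.}$ is skew symmetric, the reverse of the classical case, and \textbf{(-JT3)}, \textbf{(-JT4)}, \textbf{(-JT6)} carry a flipped sign relative to \textbf{(JT3)}, \textbf{(JT4)}, \textbf{(JT6)}. One must check that these flips are precisely compensated by passing from $\epsilon=1$ to $\epsilon=-1$ in \eqref{eq:FK1}--\eqref{eq:special}, and that the step "by skew symmetry of $K$" in Theorem \ref{th:JT11} becomes "by symmetry of $K$" here without breaking the derivation of \eqref{eq:FK2}. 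I would organize the proof so that each displayed identity in the proof of Theorem \ref{th:JT11} is reproduced with the substitution $\epsilon=-1$ tracked explicitly, and point out that no new computation is needed beyond what is in Lemma \ref{le:FK1}, Proposition \ref{pr:Ss}(i), and Theorem \ref{th:JT11}. In fact, since everything is formal, it would be cleanest to simply write: ``the proof is obtained from that of Theorem \ref{th:JT11} by replacing $\epsilon=\delta=1$ with $\epsilon=\delta=-1$ throughout, and correspondingly the skew symmetry of $\langle.\vert.\rangle$ and $K$ by symmetry, and the skew symmetry of $d_{.,.}$ by skew symmetry remaining; Proposition \ref{pr:Ss}(i) supplies \eqref{eq:KKs} in this case as well.''
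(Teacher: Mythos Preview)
Your proposal is correct and mirrors exactly what the paper does: Theorem \ref{th:-JT-1-1} is given no separate proof in the paper, being understood as the straightforward superization (sign-by-sign adaptation with $\epsilon=\delta=-1$, $\langle.\vert.\rangle$ symmetric, $K$ symmetric) of the proof of Theorem \ref{th:JT11}, precisely as you outline. One small slip: when you apply \textbf{(-JT3)} to $K(x,y)z=\ptriple{x,z,y}+\ptriple{y,z,x}$ you should obtain $\langle x\vert y\rangle\bullet z$, not $\langle x\vert z\rangle\bullet y$ (your very next line has the correct expression, so this is just a transcription error), and \textbf{(-JT3)} follows directly from the definition \eqref{eq:LK} of $K$ without invoking \eqref{eq:special}.
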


\begin{remark}
Note that $K(x,y)=-\langle x\vert y\rangle$ in the $(1,1)$-setting, but $K(x,y)=\langle x\vert y\rangle$ in the $(-1,-1)$-setting.
\end{remark}

\begin{corollary}\label{co:-1-1Lie}
Let $\frg$ be a strictly $BC_1$-graded Lie superalgebra of type $C_1$ as in \eqref{eq:gslVVd}. Then the vector space $T$, endowed with the triple product $\ptriple{x,y,z}=\frac{1}{2}\bigl(-d_{x,y}(z)+\langle x\vert y\rangle\bullet z\bigr)$ in \eqref{eq:(xyz)ds}, is a special $(-1,-1)$ Freudenthal-Kantor triple system.

Conversely, given a special $(-1,-1)$ Freudenthal-Kantor triple system $T$, let $\frd$ be the linear span of the derivations $S(x,y)=L(x,y)-L(y,x)$  and let $J$ be the Jordan subalgebra $\bF 1+K(T,T)$ of $\End_{\bF}(T)^+$. Then the vector space
\[
\frg=\bigl(\frsl(V)\otimes J\bigr)\oplus\bigl(V\otimes T\bigr)\oplus\frd,
\]
with Lie bracket given by the bracket in $\frd$ and, for any $f,g\in\frsl(V)$, $u,v\in V$, $a,b\in J$, $x,y\in T$ and $\varphi\in \frd$, by the formulae:
\begin{itemize}
\item $[f\otimes a,g\otimes b]=[f,g]\otimes \frac{1}{2}(ab+ba)\, +\, \frac{1}{2}\trace(fg)[a,b]$,
\item $[f\otimes a,u\otimes x]=f(u)\otimes a\bullet x$,
\item $[u\otimes x,v\otimes y]=\gamma_{u,v}\otimes K(x,y)\,-\, (u\vert v)\bigl(L(x,y)-L(y,x)\bigr)$,
\item $[\varphi,f\otimes a]=f\otimes\varphi(a)$,
\item $[\varphi,u\otimes x]=u\otimes \varphi(x)$,
\end{itemize}
is a strictly $BC_1$-graded Lie superalgebra of type $C_1$, with $\frg\subo=\bigl(\frsl(V)\otimes J\bigr)\oplus\frd$ and $\frg\subuno=V\otimes T$.
\end{corollary}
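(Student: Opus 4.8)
The plan is to run the superization of the proof of Corollary~\ref{co:11Lie}, replacing the three ingredients used there by their super counterparts: Theorem~\ref{th:gsuperBC1} (structure of strictly $BC_1$-graded Lie superalgebras of type $C_1$), Theorem~\ref{th:-JT} (their coordinatization by $J$-ternary $(-1)$-algebras) and Theorem~\ref{th:-JT-1-1} ($J$-ternary $(-1)$-algebras versus special $(-1,-1)$ Freudenthal-Kantor triple systems).

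For the first assertion I would argue as follows. Starting from a strictly $BC_1$-graded Lie superalgebra $\frg$ of type $C_1$, Theorem~\ref{th:gsuperBC1} presents $\frg$, up to isomorphism, as the superalgebra \eqref{eq:gslVVd} attached to a unital Jordan algebra $J$, a special unital Jordan module $T$, a Lie subalgebra $\frd$ acting by derivations of $(J\oplus T,\diamond)$, and $\frd$-invariant maps $D_{.,.}$ (skew symmetric), $\langle .\vert .\rangle$ (symmetric) and $d_{.,.}$ (skew symmetric) subject to \eqref{eq:superDs}--\eqref{eq:superdxyzdzyx}. Theorem~\ref{th:-JT} then makes $(J,T)$, equipped with the triple product \eqref{eq:(xyz)ds}, a $J$-ternary $(-1)$-algebra, and Theorem~\ref{th:-JT-1-1} makes $T$ with this very triple product a special $(-1,-1)$ Freudenthal-Kantor triple system. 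Since the product in \eqref{eq:(xyz)ds} is exactly the one in the statement, this settles the direct part.

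For the converse, let $(U,xyz)$ be a special $(-1,-1)$ Freudenthal-Kantor triple system and set $J=\bF 1+K(U,U)$. By Theorem~\ref{th:-JT-1-1}, $(J,U)$ is a $J$-ternary $(-1)$-algebra with $a\bullet x=a(x)$, $\ptriple{x,y,z}=xyz$ and $\langle x\vert y\rangle=K(x,y)$. The converse half of Theorem~\ref{th:-JT} shows that the maps $D_{.,.}$ and $d_{.,.}$ of \eqref{eq:defD} and \eqref{eq:defd} land inside $\der(J\oplus U,\diamond)$, are $\der$-invariant, and satisfy \eqref{eq:superDs}, \eqref{eq:superDaxy}, \eqref{eq:superaxy}, \eqref{eq:superdxyzdzyx} (the remaining identities \eqref{eq:superDabx}, \eqref{eq:superdxya} being built into the definitions \eqref{eq:defD}, \eqref{eq:defd}). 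I would then observe that the span $\calS$ of the operators $S(x,y)=L(x,y)-L(y,x)$ is a Lie subalgebra of $\der(J\oplus U,\diamond)$ that contains the images of $D_{.,.}$ and $d_{.,.}$ and acts on both $J$ and $U$: this follows from Lemma~\ref{le:FK1}, which gives $[\calS,\calS]\subseteq\calS$, $[\calS,K(U,U)]\subseteq K(U,U)$ and, via \eqref{eq:[TT]}, $[K(u,v),K(x,y)]\in\calS$, so that $[a,b]\in\calS$ for $a,b\in K(U,U)$. Feeding $(J,U,\frd=\calS)$ into Theorem~\ref{th:gsuperBC1} produces a strictly $BC_1$-graded Lie superalgebra of type $C_1$ on $(\frsl(V)\otimes J)\oplus(V\otimes U)\oplus\frd$ with bracket \eqref{eq:g[]} and $\bZ_2$-grading as in \eqref{eq:g0g1}. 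It remains to rewrite \eqref{eq:g[]} in the concrete form stated, using the speciality relation \eqref{eq:special} with $\epsilon=\delta=-1$, i.e.\ $K(x,y)=L(x,y)+L(y,x)$. From \eqref{eq:defd},
\[
d_{x,y}(z)=\langle x\vert y\rangle\bullet z-2\ptriple{x,y,z}=\bigl(K(x,y)-2L(x,y)\bigr)(z)=-\bigl(L(x,y)-L(y,x)\bigr)(z)=-S(x,y)(z),
\]
and $d_{x,y}(a)=\langle a\bullet x\vert y\rangle-\langle x\vert a\bullet y\rangle=K(a(x),y)-K(x,a(y))=-[S(x,y),a]$ by \eqref{eq:[TS]}; together with $D_{a,b}(x)=\frac{1}{4}[a,b](x)$ and $D_{a,b}(c)=a\cdot(b\cdot c)-b\cdot(a\cdot c)=\frac{1}{4}[[a,b],c]$ (the last identity exactly as computed in the proof of Corollary~\ref{co:11Lie}) and $a\cdot b=\frac{1}{2}(ab+ba)$, substitution into \eqref{eq:g[]} yields verbatim the five bracket formulae in the statement.

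The only genuine bookkeeping is the sign management forced by the super setting — most visibly that $\langle .\vert .\rangle$ is now symmetric while $d_{.,.}$ is skew symmetric, which flips a few signs relative to the $(1,1)$ case — together with the matching of the abstract bracket \eqref{eq:g[]} against the displayed one. Since each such step is a direct transcription of the corresponding computation already carried out for $(1,1)$ triple systems, I do not expect any essentially new difficulty; the point that warrants the most care is simply the verification that $\frd=\calS$ is closed under bracket and normalizes $J$, which is precisely what Lemma~\ref{le:FK1} supplies.
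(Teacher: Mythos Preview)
Your proposal is correct and follows exactly the route the paper intends: Corollary~\ref{co:-1-1Lie} is stated without a separate proof because it is the direct superization of Corollary~\ref{co:11Lie}, and you have carried that superization out faithfully, invoking Theorems~\ref{th:gsuperBC1}, \ref{th:-JT}, \ref{th:-JT-1-1} and Lemma~\ref{le:FK1} at the corresponding points and redoing the identifications $d_{x,y}=-S(x,y)$, $4D_{a,b}=[a,b]$ with the appropriate $\epsilon=\delta=-1$ sign changes. The one detail you single out for care---that $\frd=\calS$ is closed under brackets and normalizes $J=\bF 1+K(U,U)$, via \eqref{eq:[ST]}, \eqref{eq:[TS]}, \eqref{eq:[TT]} with $T=K$---is indeed the only point requiring verification beyond transcription, and you handle it correctly.
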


\medskip

\section{Dicyclic symmetry}\label{se:dicyclic}

Let $V$ be a two-dimensional vector space endowed with a nonzero (and hence nondegenerate) skew symmetric bilinear form $(.|.)$. Let $\{u,v\}$ be a symplectic basis of $V$ (that is, $(u\vert v)=1$). Assume that the ground field contains the primitive cubic roots $\omega$ and $\omega^2$ of $1$. Then the corresponding symplectic group $Sp(V)=SL(V)$ contains the elements $\theta$, $\phi$, whose coordinate matrices on the basis above are:
\begin{equation}\label{eq:thetaphi}
\begin{split}
\theta&\leftrightarrow \begin{pmatrix}0&1\\-1&0\end{pmatrix},\quad\text{(so $\theta(u)=-v$, $\theta(v)=u$),}\\
\phi&\leftrightarrow \begin{pmatrix} \omega&0\\ 0&\omega^2\end{pmatrix}.
\end{split}
\end{equation}
These elements satisfy
\begin{equation}\label{eq:theta4phi3}
\theta^4=1=\phi^3,\quad \phi\theta\phi=\theta,
\end{equation}
and hence the group they generate is the \emph{dicyclic group} $\Dic_3$ (semidirect product of $C_3$ by $C_4$).

\smallskip

It turns out that conjugation by elements of  $\Dic_3$ gives an action of this group by automorphisms on $\frsl(V)$ ($\sigma(f)=\sigma f\sigma^{-1}$ for any $\sigma\in \Dic_3$ and $f\in \frsl(V)$), and hence that any $BC_1$ graded Lie algebra of type $C_1$ (as in \eqref{eq:gslVVd}) is endowed with an action of $\Dic_3$ by automorphisms.

\smallskip

The aim of this section is the study of those Lie algebras endowed with an action of $\Dic_3$ by automorphisms.

\smallskip

Let $\frg$ be a Lie algebra endowed with such an action. Then $\phi$ induces a grading over $\bZ_3$:
\begin{equation}\label{eq:g1gomegagomega2}
\frg=\frg_1\oplus\frg_\omega\oplus\frg_{\omega^2},
\end{equation}
where $\frg_1=\{x\in \frg:\phi(x)=x\}$, $\frg_\omega=\{x\in \frg:\phi(x)=\omega x\}$ and $\frg_{\omega^2}=\{x\in\frg: \phi(x)=\omega^2x\}$.

Since $\phi\theta\phi=\theta$, the following conditions hold:
\[
\theta(\frg_1)=\frg_1,\quad \theta(\frg_\omega)=\frg_{\omega^2},\quad\theta(\frg_{\omega^2})=\frg_\omega.
\]
This allows us to define a unary, binary and ternary operations on $\frg_\omega$ as follows:
\begin{subequations}\label{eq:barsquaretriple}
\begin{gather}
\bar x=\theta^2(x),\\
x*y=\theta^{-1}\bigl([x,y]\bigr),\\
\{x,y,z\}=[[x,\theta(y)],z],
\end{gather}
\end{subequations}
for $x,y,z\in \frg_\omega$. It is easy to check that these are indeed well defined. For instance, for $x,y,z\in \frg_\omega$, $\theta(y)$ belongs to $\frg_{\omega^2}$, and hence we have $[x,\theta(y)]\in[\frg_\omega,\frg_{\omega^2}]\subseteq \frg_1$ and $\{x,y,z\}=[[x,\theta(y)],z]\in [\frg_1,\frg_\omega]\subseteq \frg_\omega$.

\begin{theorem}\label{th:dicyclic}
Let $\frg$ be a Lie algebra over a field $\bF$ containing the primitive cubic roots of $1$, endowed with an action by automorphisms of the dicyclic group $\Dic_3$. Let $\bar x$, $x*y$ and $\{x,y,z\}$ be defined as in \eqref{eq:barsquaretriple} on $\frg_\omega=\{x\in \frg:\phi(x)=\omega x\}$. Then the binary product $x*y$ is anticommutative and the map $x\mapsto \bar x$ is an order $2$ automorphism of both the binary and the ternary products: $\overline{x*y}=\bar x*\bar y$, $\overline{\{x,y,z\}}=\{\bar x,\bar y,\bar z\}$. Moreover, the following identities hold for any $u,v,,w,x,y,z\in\frg_\omega$:
\begin{description}
\addtolength{\itemindent}{-22pt}
\item[(D1)] $\{x,z,y\}-\{y,z,x\}=(\bar x*\bar y)*\bar z$,
\smallskip
\item[(D2)] $\{u,\bar v,x*y\}+\{v,u,x\}*y+x*\{v,u,y\}=0$,
\smallskip
\item[(D3)] $\{x,y*z,w\}+\{y,z*x,w\}+\{z,x*y,w\}=0$,
\smallskip
\item[(D4)] $\{\bar x*\bar y,z,w\}+\{\bar y*\bar z,x,w\}+\{\bar z*\bar x,y,w\}=0$,
\smallskip
\item[(D5)] $\{u,v,\{x,y,z\}\}=\{\{u,v,x\},y,z\}-\{x,\{v,\bar u,y\},z\}+\{x,y,\{u,v,z\}\}$.
\end{description}
\end{theorem}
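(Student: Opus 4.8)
The plan is to deduce every assertion from the Jacobi identity in $\frg$ together with a few elementary consequences of the $\Dic_3$-action, the whole difficulty being bookkeeping. I would first record the setup. With respect to the $\bZ_3$-grading $\frg=\frg_1\oplus\frg_\omega\oplus\frg_{\omega^2}$ coming from $\phi$, the subspace $\frg_1$ is a subalgebra with $[\frg_1,\frg_\omega]\subseteq\frg_\omega$, the automorphism $\theta$ satisfies $\theta(\frg_1)=\frg_1$ and $\theta(\frg_\omega)=\frg_{\omega^2}=\theta^{-1}(\frg_\omega)$, and $\theta^4=1$. From the definitions in \eqref{eq:barsquaretriple} this gives, for $x,y\in\frg_\omega$, the pivotal relations
\[
[x,y]=\theta(x*y)\in\frg_{\omega^2},\qquad \theta([x,y])=\theta^{2}(x*y)=\overline{x*y}=\bar x*\bar y,
\]
and, because $[x,\theta y]\in\frg_1$, the fact that $\ad_{\frg}([x,\theta y])$ is an inner derivation of $\frg$ preserving $\frg_\omega$ whose restriction there is $z\mapsto\{x,y,z\}$. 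The structural claims are then immediate: $x*y=\theta^{-1}[x,y]=-\theta^{-1}[y,x]=-(y*x)$; since $\theta^{2}$ is an automorphism commuting with $\theta^{\pm1}$, $\overline{x*y}=\theta^{-1}([\theta^{2}x,\theta^{2}y])=\bar x*\bar y$ and, similarly, $\overline{\{x,y,z\}}=\{\bar x,\bar y,\bar z\}$; and $\overline{\bar x}=\theta^{4}(x)=x$.

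Next I would dispatch \textbf{(D3)} and \textbf{(D4)}, which are immediate. Using $\theta(y*z)=[y,z]$ one has $\{x,y*z,w\}=[[x,[y,z]],w]$, so the cyclic sum in \textbf{(D3)} is $\bigl[[x,[y,z]]+[y,[z,x]]+[z,[x,y]],\,w\bigr]=0$ by Jacobi. Likewise $\bar x*\bar y=\theta([x,y])$ gives $\{\bar x*\bar y,z,w\}=[\theta([[x,y],z]),w]$, so the cyclic sum in \textbf{(D4)} is $\bigl[\theta\bigl([[x,y],z]+[[y,z],x]+[[z,x],y]\bigr),\,w\bigr]=0$, again by Jacobi. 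Here (and throughout) the grading makes every intermediate bracket legitimate and places it in the expected component, e.g.\ $[x,[y,z]]\in[\frg_\omega,\frg_{\omega^2}]\subseteq\frg_1$.

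For \textbf{(D1)}, Jacobi applied to $x,\theta z,y$ yields $\{x,z,y\}-\{y,z,x\}=[[x,y],\theta z]$, and then two uses of $[p,q]=\theta(p*q)$ (first on $x,y$, then on $x*y,z$) turn this into $\theta([x*y,z])=\theta^{2}((x*y)*z)=\overline{(x*y)*z}=(\bar x*\bar y)*\bar z$. For \textbf{(D2)}, set $c=[v,\theta u]\in\frg_1$, so $\{v,u,p\}=[c,p]$ for $p\in\frg_\omega$ and $\ad c$ is a derivation; then $\{v,u,x\}*y+x*\{v,u,y\}=\theta^{-1}\bigl([[c,x],y]+[x,[c,y]]\bigr)=\theta^{-1}([c,[x,y]])$, while $\theta\bigl(\{u,\bar v,x*y\}\bigr)=[\theta([u,\theta\bar v]),[x,y]]=[[\theta u,v],[x,y]]=-[c,[x,y]]$, using $\theta(x*y)=[x,y]$, $\theta^{2}(\bar v)=v$ and $\theta^{4}=1$; adding the three terms gives $0$.

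Finally, \textbf{(D5)} is the one with substance. Put $a=[u,\theta v]$, $b=[x,\theta y]\in\frg_1$; since $\ad a$, $\ad b$ act on $\frg_\omega$ as $\{u,v,-\}$, $\{x,y,-\}$, one gets $\{u,v,\{x,y,z\}\}-\{x,y,\{u,v,z\}\}=[\ad a,\ad b](z)=\ad([a,b])(z)$ for $z\in\frg_\omega$, reducing everything to the computation of $[a,b]$. Since $\ad a$ is a derivation, $[a,b]=[a,[x,\theta y]]=[[a,x],\theta y]+[x,[a,\theta y]]=[\{u,v,x\},\theta y]+[x,[a,\theta y]]$. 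The crux is the identity $[a,\theta y]=-\theta(\{v,\bar u,y\})$: writing $\{v,\bar u,y\}=[[v,\theta^{3}u],y]$ (because $\theta\bar u=\theta^{3}u$) and applying $\theta$ gives $\theta\bigl([[v,\theta^{3}u],y]\bigr)=[[\theta v,\theta^{4}u],\theta y]=[[\theta v,u],\theta y]=-[a,\theta y]$, where $\theta^{4}=1$ is used. Hence $[a,b]=[\{u,v,x\},\theta y]-[x,\theta\{v,\bar u,y\}]$, and applying $\ad(\cdot)$ to $z\in\frg_\omega$ yields $\{\{u,v,x\},y,z\}-\{x,\{v,\bar u,y\},z\}$, which is precisely \textbf{(D5)} after rearrangement. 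The only real obstacle anywhere in the argument is keeping the powers of $\theta$ straight modulo $\theta^{4}=1$ and tracking which graded component each bracket lives in; in particular the identity $[a,\theta y]=-\theta(\{v,\bar u,y\})$ is the single spot where a misplaced sign or exponent would be fatal, so I would verify it with care.
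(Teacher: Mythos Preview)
Your proof is correct and follows essentially the same approach as the paper: both deduce each identity from the Jacobi identity together with the relation $[x,y]=\theta(x*y)$ and careful tracking of powers of $\theta$ modulo $\theta^4=1$. The only cosmetic differences are that the paper proves \textbf{(D2)} via the intermediate identity $[d,x*y]=[\theta(d),x]*y+x*[\theta(d),y]$ for $d\in\frg_1$, and expands \textbf{(D5)} directly rather than through $[\ad a,\ad b]=\ad([a,b])$; in substance these are the same computations.
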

\begin{proof}
The anticommutativity of $x*y$ is clear. Also, since $\theta^2$ generates the center of the dicyclic group $\Dic_3$, it follows easily that $x\mapsto \bar x=\theta^2(x)$ is an order $2$ automorphism of both the binary and ternary products.

Now for $x,y,z\in \frg_\omega$:
\[
\begin{split}
\{x,z,y\}-\{y,z,x\}&=[[x,\theta(z)],y]-[[y,\theta(z)],x]\\
  &=[[x,y],\theta(z)]=\theta\bigl([\theta^{-1}([x,y]),z]\bigr)\\
  &=\theta\bigl([x*y,z])=\theta^2\bigl(\theta^{-1}([x*y,z])\bigr)\\
  &=\overline{(x*y)*z}=(\bar x*\bar y)*\bar z,
\end{split}
\]
which gives \textbf{(D1)}.

For any $d\in\frg_1$ we have
\[
\begin{split}
[d,x*y]&=[d,\theta^{-1}([x,y])]=\theta^{-1}\bigl([\theta(d),[x,y]]\bigr)\\
  &=\theta^{-1}\bigl([[\theta(d),x],y]+[x,[\theta(d),y]]\bigr)\\
  &=[\theta(d),x]*y+x*[\theta(d),y],
\end{split}
\]
and thus
\[
\begin{split}
\{u,\bar v,x*y\}&=[[u,\theta(\bar v)],x*y]\\
  &=[\theta([u,\theta(\bar v)]),x]*y+x*[\theta([u,\theta(\bar v)]),y]\\
  &=-[[v,\theta(u)],x]*y-x*[[v,\theta(u)],y]\\
  &=-\{v,u,x\}*y-x*\{v,u,y\},
\end{split}
\]
which gives \textbf{(D2)}.

Equation \textbf{(D3)} is a direct consequence of the Jacobi identity, since $\{x,y*z,w\}=[[x,\theta(y*z)],w]=[[x,[y,z]],w]$. The same happens for \textbf{(D4)}, as $\{\bar x*\bar y,z,w\}=[[\bar x*\bar y,\theta(z)],w]=[[\overline{x*y},\theta(z)],w] =[[\theta^2\bigl(\theta^{-1}([x,y])\bigr),\theta(z)],w] =[[[\theta(x),\theta(y)],\theta(z)],w]$.

Finally,
\[
\begin{split}
\{u&,v,\{x,y,z\}\}\\
    &=[[u,\theta(v)],[[x,\theta(y)],z]]\\
  &=[[[[u,\theta(v)],x],\theta(y)],z]+[[x,[[u,\theta(v)],\theta(y)]],z]+
  [[x,\theta(y)],[[u,\theta(v)],z]]\\
  &=\{\{u,v,x\},y,z\}-[[x,\theta\bigl([[v,\theta^{-1}(u)],y]\bigr)],z]+\{x,y,\{u,v,z\}\}\\
  &=\{\{u,v,x\},y,z\}-\{x,\{v,\bar u,y\},z\}+\{x,y,\{u,v,z\}\},
\end{split}
\]
thus obtaining \textbf{(D5)}.
\end{proof}

\medskip

\begin{definition}\label{de:dicyclic}
Let $(A,\bar{\ },*,\{...\})$ be a vector space endowed with a linear endomorphism $x\mapsto \bar x$, a binary product $x*y$ and a triple product $\{x,y,z\}$. Then $(A,\bar{\ },*,\{...\})$ is said to be a \emph{dicyclic ternary algebra} (or simply a \emph{$D$-ternary algebra}) if the linear map $\bar{\ }$ is an order $2$ automorphism of both the binary and the triple products, the binary product is anticommutative, and conditions \textbf{(D1)--(D5)} are satisfied.
\end{definition}

\smallskip

Theorem \ref{th:dicyclic} has a natural converse:

\begin{theorem}\label{th:conversedicyclic}
Let $(A,\bar{\ },*,\{...\})$ be a dicyclic algebra. Let $\iota_1(A)$ and $\iota_2(A)$ denote two copies of $A$, and for any $u,v\in A$ denote by $\tau(u,v)\in\End_\bF(A)$ the linear map $x\mapsto \{u,v,x\}$, and by $\tau(A,A)$ the linear span of these operators. Then the vector space direct sum
\[
\frg(A)=\tau(A,A)\oplus\iota_1(A)\oplus\iota_2(A)
\]
is a Lie algebra under the bracket:
\[
\begin{split}
[\tau(u,v),\tau(x,y)]&=\tau\bigl(\{u,v,x\},y\bigr)-\tau\bigl(x,\{v,\bar u,y\}\bigr),\\
[\tau(u,v),\iota_1(x)]&=\iota_1\bigl(\{u,v,x\}\bigr),\\
[\tau(u,v),\iota_2(x)]&=-\iota_2\bigl(\{v,\bar u,x\}\bigr),\\
[\iota_1(x),\iota_1(y)]&=\iota_2(x*y),\\
[\iota_2(x),\iota_2(y)]&=\iota_1(\overline{x*y}),\\
[\iota_1(x),\iota_2(y)]&=\tau(x,y),
\end{split}
\]
for any $u,v,x,y\in A$.

Moreover, if the ground field $\bF$ contains the primitive cubic roots $\omega$ and $\omega^2$ of $1$, then $\frg$ is endowed with an action by automorphisms of the dicyclic group $\Dic_3$ as follows:
\[
\begin{aligned}
\phi\bigl(\tau(x,y)\bigr)&=\tau(x,y)&\quad \phi\bigl(\iota_1(x)\bigr)&=\omega\iota_1(x)&\quad \phi\bigl(\iota_2(x)\bigr)&=\omega^2\iota_2(x)\\
\theta\bigl(\tau(x,y)\bigr)&=-\tau(\bar y,x)&\theta\bigl(\iota_1(x)\bigr)&=\iota_2(x)&
\theta\bigl(\iota_2(x)\bigr)&=\iota_1(\bar x)
\end{aligned}
\]
for any $x,y\in A$. \qed
\end{theorem}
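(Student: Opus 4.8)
The plan is to verify directly that the proposed bracket on $\frg(A)=\tau(A,A)\oplus\iota_1(A)\oplus\iota_2(A)$ is anticommutative and satisfies the Jacobi identity, and then to check that $\phi$ and $\theta$, as defined, are automorphisms generating $\Dic_3$. The natural organizing principle is to exploit the $\bZ_3$-grading built into the definition: place $\tau(A,A)$ in degree $1$, $\iota_1(A)$ in degree $\omega$, and $\iota_2(A)$ in degree $\omega^2$. Once anticommutativity of the bracket is checked on each pair of homogeneous components (this is immediate for $[\iota_1,\iota_1]$ and $[\iota_2,\iota_2]$ from anticommutativity of $*$, for $[\iota_1,\iota_2]$ one needs $\tau(x,y)=-\tau(y,x)$ which follows from \textbf{(D1)} with $z$ chosen so that $\bar x*\bar y$ term vanishes—actually more carefully, $\tau(x,y)-\tau(y,x)$ applied to $z$ is $\{x,y,z\}-\{y,x,z\}$, which needs a separate identity; I would first derive $\tau(x,y)=-\tau(y,x)$, or rather $[\iota_1(x),\iota_2(y)]=-[\iota_2(y),\iota_1(x)]$ is the definition so anticommutativity there is a convention, and for $[\tau,\tau]$ one checks the formula is manifestly skew), the Jacobi identity splits into finitely many cases according to the total $\bZ_3$-degree of the three arguments.

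The key steps, in order, are as follows. First I would record the basic consequences of the axioms that will be used repeatedly: that $\tau(\{u,v,x\},y)-\tau(x,\{v,\bar u,y\})$ is indeed the commutator of $\tau(u,v)$ and $\tau(x,y)$ inside $\End_\bF(A)$ — this is precisely \textbf{(D5)} read as $[\tau(u,v),\tau(x,y)](z)=\tau(\{u,v,x\},y)(z)-\tau(x,\{v,\bar u,y\})(z)$ — so that $\tau(A,A)$ is automatically a Lie subalgebra and the bracket $[\tau(u,v),\tau(x,y)]$ is well-defined independently of the presentation of the operators. Second, the two "mixed" Jacobi identities of type $(\tau,\tau,\iota_i)$ reduce, via the definition of the action on $\iota_i$, to the associativity-type identity \textbf{(D5)} again (for $\iota_1$) and to its $\bar{\ }$-twisted companion (for $\iota_2$), the latter obtained by applying $\bar{\ }$ and using that $\bar{\ }$ is an automorphism of the triple product. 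Third, the Jacobi identity of type $(\tau,\iota_1,\iota_1)$ unwinds to \textbf{(D2)}: the bracket $[\tau(u,v),[\iota_1(x),\iota_1(y)]]=[\tau(u,v),\iota_2(x*y)]=-\iota_2(\{v,\bar u,x*y\})$ must equal $[\iota_1(\{u,v,x\}),\iota_1(y)]+[\iota_1(x),\iota_1(\{u,v,y\})]=\iota_2(\{u,v,x\}*y+x*\{u,v,y\})$, and comparing gives \textbf{(D2)} after renaming. The symmetric case $(\tau,\iota_2,\iota_2)$ is the $\bar{\ }$-image of this. Fourth, the type $(\tau,\iota_1,\iota_2)$ identity becomes exactly the statement that $[\tau(u,v),\tau(x,y)]=\tau(\{u,v,x\},y)-\tau(x,\{v,\bar u,y\})$ — i.e. it is forced to be consistent with step one, using the definitions $[\tau(u,v),\iota_1(x)]=\iota_1(\{u,v,x\})$ and $[\tau(u,v),\iota_2(y)]=-\iota_2(\{v,\bar u,y\})$. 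Fifth, the type $(\iota_1,\iota_1,\iota_1)$ identity $\sum_{\mathrm{cyclic}}[\iota_1(x),[\iota_1(y),\iota_1(z)]]=\sum_{\mathrm{cyclic}}[\iota_1(x),\iota_2(y*z)]=\sum_{\mathrm{cyclic}}\tau(x,y*z)$, which must vanish; applied to an arbitrary $w$ this is \textbf{(D3)}. The symmetric $(\iota_2,\iota_2,\iota_2)$ case is the $\bar{\ }$-image and uses \textbf{(D4)} (note the $\bar{\ }$ appears because $[\iota_2(x),\iota_2(y)]=\iota_1(\overline{x*y})$). Sixth, the two remaining mixed types $(\iota_1,\iota_1,\iota_2)$ and $(\iota_1,\iota_2,\iota_2)$: the first gives $[\iota_1(x),[\iota_1(y),\iota_2(z)]]+[\iota_1(y),[\iota_2(z),\iota_1(x)]]+[\iota_2(z),[\iota_1(x),\iota_1(y)]]=[\iota_1(x),\tau(y,z)]+[\iota_1(y),\tau(z,x)]+[\iota_2(z),\iota_2(x*y)]=-\iota_1(\{y,z,x\})-\iota_1(\{z,x,y\})$... wait, one must be careful with signs from reordering $[\iota_1,\tau]=-[\tau,\iota_1]$; carrying this through, the identity becomes \textbf{(D1)} (relating $\{x,z,y\}-\{y,z,x\}$ to $(\bar x*\bar y)*\bar z$, which is $\overline{x*y}*\bar z$, matching the $[\iota_2(z),\iota_1(\overline{x*y})]$ term after applying $\theta$-type bookkeeping). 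The second mixed type is its $\bar{\ }$-image.

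Finally, for the $\Dic_3$-action: $\phi$ is an automorphism because it acts as a scalar on each graded piece and the bracket is graded (degrees add modulo $3$, with $\omega\cdot\omega=\omega^2$, $\omega^2\cdot\omega^2=\omega$, $\omega\cdot\omega^2=1$). For $\theta$, one checks $\theta([a,b])=[\theta(a),\theta(b)]$ on each pair of homogeneous components: e.g. $\theta([\iota_1(x),\iota_1(y)])=\theta(\iota_2(x*y))=\iota_1(\overline{x*y})$, while $[\theta\iota_1(x),\theta\iota_1(y)]=[\iota_2(x),\iota_2(y)]=\iota_1(\overline{x*y})$; the case $[\iota_1(x),\iota_2(y)]$ uses $\theta(\tau(x,y))=-\tau(\bar y,x)$ against $[\iota_2(x),\iota_1(\bar y)]=-[\iota_1(\bar y),\iota_2(x)]=-\tau(\bar y,x)$; the $[\tau,\iota_i]$ and $[\tau,\tau]$ cases use that $\bar{\ }$ is an automorphism of the triple product together with the identity $\tau(\bar x,\bar y)=\overline{\tau(x,y)}\,\bar{\ }^{-1}=\bar{\ }\,\tau(x,y)\,\bar{\ }$ as operators. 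Then $\theta^4=1$ because $\theta^2=\bar{\ }$ on $\iota_1\oplus\iota_2$ has order $2$ and on $\tau(A,A)$ one computes $\theta^2(\tau(x,y))=\theta(-\tau(\bar y,x))=\tau(\bar x,\bar y)$, which has order $2$; $\phi^3=1$ is clear; and $\phi\theta\phi=\theta$ is checked on generators: on $\iota_1(x)$, $\phi\theta\phi(\iota_1(x))=\phi\theta(\omega\iota_1(x))=\phi(\omega\iota_2(x))=\omega\cdot\omega^2\iota_2(x)=\iota_2(x)=\theta(\iota_1(x))$, and similarly on $\iota_2$ and $\tau$.

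The main obstacle I anticipate is purely bookkeeping: keeping the signs straight in the mixed Jacobi identities of types $(\iota_1,\iota_1,\iota_2)$ and $(\iota_1,\iota_2,\iota_2)$, where one must repeatedly reorder brackets like $[\iota_1,\tau]=-[\tau,\iota_1]$ and $[\iota_2,\tau]=-[\tau,\iota_2]$, and correctly match the resulting expression against the axiom \textbf{(D1)} after accounting for the $\bar{\ }$ that enters through $[\iota_2(x),\iota_2(y)]=\iota_1(\overline{x*y})$. A secondary subtlety is verifying well-definedness of $[\tau(u,v),\tau(x,y)]$: one must confirm that if $\sum_i\tau(u_i,v_i)=0$ as an operator then $\sum_i\bigl(\tau(\{u_i,v_i,x\},y)-\tau(x,\{v_i,\bar u_i,y\})\bigr)=0$, which follows because this combination equals $[\sum_i\tau(u_i,v_i),\tau(x,y)]$ as operators by \textbf{(D5)}, hence is the commutator with the zero operator. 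Everything else is a direct, if lengthy, substitution.
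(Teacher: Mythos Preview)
Your approach is exactly the one the paper takes: the paper's entire proof is the sentence ``The proof consists of straightforward computations using \textbf{(D1)--(D5)}'' together with the observation (which you also make) that by \textbf{(D5)} the bracket $[\tau(u,v),\tau(x,y)]$ is the operator commutator in $\End_\bF(A)$, so $\tau(A,A)$ is a Lie subalgebra and well-definedness on that piece is automatic. Your roadmap---matching each Jacobi case to one of \textbf{(D1)}--\textbf{(D5)} via the $\bZ_3$-grading, and then checking the $\Dic_3$ relations on homogeneous generators---is precisely the computation the paper leaves implicit, and your identification of which axiom handles which case is correct.
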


The proof consists of straightforward computations using \textbf{(D1)--(D5)}. Note that the bracket $[\tau(u,v),\tau(x,y)]$ is just, due to \textbf{(D5)}, the bracket of these operators in $\End_\bF(A)$.

\smallskip

In ending this section, we simply note that the generalized Malcev algebra based upon the $S_3$ symmetry considered in \cite{EOS3S4} is a special case of the $D$-ternary algebra in which we have $\bar x=x$ for any $x\in U$.

\medskip

\section{Dicyclic and $J$-ternary algebras}\label{se:dicyclicJternary}

Let $(J,T)$ be a $J$-ternary algebra and consider the attached Lie algebra in \eqref{eq:gJT}:
\[
\frg(J,T)=\bigl(\frsl(V)\otimes J)\oplus\bigl(V\otimes T\bigr)\oplus \frd.
\]
Then, if the ground field $\bF$ contains the primitive cubic roots $\omega,\omega^2$ of $1$, the dicyclic group $\Dic_3$ (which is a subgroup of the symplectic group $Sp(V)=SL(V)$) acts on $\frg(J,T)$ by automorphisms. Recall that the action of $\Dic_3$ on $\frsl(V)$ is by conjugation. The action on $\frd$ is trivial.

Let $\{u,v\}$ be a symplectic basis of $V$: $(u\vert v)=1$, and consider the standard basis $\{H,E,F\}$ of $\frsl(V)$, with coordinate matrices in this basis:
\begin{equation}\label{eq:matricesHEF}
H\leftrightarrow\begin{pmatrix} 1&0\\ 0&-1\end{pmatrix},\quad
E\leftrightarrow\begin{pmatrix} 0&1\\ 0&0\end{pmatrix},\quad
F\leftrightarrow\begin{pmatrix} 0&0\\ 1&0\end{pmatrix},
\end{equation}
which satisfy
\[
[H,E]=2E,\quad [H,F]=-2F,\quad [E,F]=H.
\]
Let $\gamma_{w_1,w_2}=(w_1\vert.)w_2+(w_2\vert .)w_1$ be as in Lemma \ref{le:invariantV}. Then, relative to our symplectic basis we have:
\begin{equation}\label{eq:gammaHEF}
\gamma_{u,v}=-H,\qquad\gamma_{u,u}=2E,\qquad \gamma_{v,v}=-2F.
\end{equation}
The action of the elements $\theta,\phi\in\Dic_3$ in \eqref{eq:thetaphi} induces the action by conjugation on $\frsl(V)$, which satisfies:
\[
\begin{aligned}
\phi H\phi^{-1}&=H,&\qquad \phi E\phi^{-1}&=\omega^2 E,&\qquad
\phi F\phi^{-1}&=\omega F,\\
\theta H\theta^{-1}&=-H,&\qquad \theta E\theta^{-1}&=-F,&\qquad
\theta F\theta^{-1}&=-E.
\end{aligned}
\]
(Note that $\sigma\gamma_{w_1,w_2}\sigma^{-1}=\gamma_{\sigma(w_1),\sigma(w_2)}$ for any $\sigma\in Sp(V)$ and $w_1,w_2\in V$.)

Then the Lie algebra $\frg=\frg(J,T)$ decomposes under the action of $\phi$ as in \eqref{eq:g1gomegagomega2} with
\[
\begin{split}
\frg_1&=\bigl(H\otimes J\bigr)\oplus \frd,\\
\frg_\omega&=\bigl(F\otimes J\bigr)\oplus\bigl(u\otimes T\bigr),\\
\frg_{\omega^2}&=\bigl(E\otimes J\bigr)\oplus\bigl(v\otimes T\bigr).
\end{split}
\]
Therefore $\frg_\omega=\bigl(F\otimes J\bigr)\oplus\bigl(u\otimes T\bigr)$ becomes a dicyclic ternary algebra with order $2$ automorphism given by
\[
\overline{F\otimes a}=F\otimes a,\quad \overline{u\otimes x}=-u\otimes x,
\]
for any $a\in J$ and $x\in T$ (recall from \eqref{eq:barsquaretriple} that $\bar{\ }$ is the restriction to $\frg_\omega$ of $\theta^2$),  with binary and triple products given by the following formulas (where we use \eqref{eq:g[]}):
\[
\begin{split}
(F\otimes a)*(F\otimes b)&=\theta^{-1}\bigl([F\otimes a,F\otimes b]\bigr)=0,\\[4pt]
(F\otimes a)*(u\otimes x)&=\theta^{-1}\bigl([F\otimes a,u\otimes x]\bigr)\\
    &=
     \theta^{-1}\bigl(v\otimes a\bullet x\bigr)\\
     &=-u\otimes a\bullet x\\[4pt]
(u\otimes x)*(u\otimes y)&=\theta^{-1}\bigl([u\otimes x,u\otimes y]\bigr)
      \\
      &=\theta^{-1}\bigl(2E\otimes \langle x\vert y\rangle\bigr)\\
      &=-2F\otimes\langle x\vert y\rangle,
\end{split}
\]
and with triple product given by:
\[
\begin{split}
\{F\otimes a,u\otimes x,X\}&=[[F\otimes a,\theta(u\otimes x)],X]\\
    &=-[[F\otimes a,v\otimes x],X]=0,\\
\{u\otimes x,F\otimes a,X\}&=[[u\otimes x,\theta(F\otimes a)],X]\\
    &=-[[u\otimes x,E\otimes a],X]=0,
\end{split}
\]
for any $a\in J$, $x\in T$, and by:
\[
\begin{split}
\{F\otimes a,F\otimes b,F\otimes c\}
    &=[[F\otimes a,\theta(F\otimes b)],F\otimes c]\\
    &=-[[F\otimes a,E\otimes b],F\otimes c]\\
    &=[H\otimes a\cdot b-2D_{a,b},F\otimes c]\quad\text{(as $\trace(FE)=1$)}\\
    &=-2F\otimes\bigl((a\cdot b)\cdot c+D_{a,b}(c)\bigr)\\
    &=-2F\otimes\bigl( (a\cdot b)\cdot c+a\cdot(b\cdot c)-(a\cdot c)\cdot b\bigr),\\[4pt]
\{F\otimes a,F\otimes b,u\otimes x\}
    &=[H\otimes a\cdot b-2D_{a,b},u\otimes x]\\
    &=u\otimes\bigl((a\cdot b)\bullet x-2D_{a,b}(x)\bigr)\\
    &=u\otimes (b\bullet(a\bullet x)),\quad\text{(using \eqref{eq:Dabx} and \eqref{eq:abx})}\\[4pt]
\{u\otimes x,u\otimes y,F\otimes a\}
    &=[[u\otimes x,\theta(u\otimes y)],F\otimes a]\\
    &=-[[u\otimes x,v\otimes y],F\otimes a]\\
    &=[H\otimes\langle x\vert y\rangle-d_{x,y},F\otimes a]\\
    &=F\otimes\bigl(-2\langle x\vert y\rangle\cdot a-d_{x,y}(a)\bigr)\\
    &=-2F\otimes \langle a\bullet x\vert y\rangle,\quad\text{(using \eqref{eq:axy} and \eqref{eq:dxya})}\\[4pt]
\{u\otimes x,u\otimes y,u\otimes z\}
    &=[H\otimes \langle x\vert y\rangle-d_{x,y},u\otimes z]\\
    &=u\otimes\bigl(\langle x\vert y\rangle\bullet z-d_{x,y}(z)\bigr)\\
    &=2u\otimes \ptriple{x,y,z},\quad\text{(using \eqref{eq:(xyz)ds})}
\end{split}
\]
for any $a,b\in J$, $x,y,z\in T$ and $X\in \frg_\omega$.

\smallskip

We may identify $\frg_\omega=\bigl(F\otimes J\bigr)\oplus\bigl(u\otimes T\bigr)$ with $A=J\oplus T$, by means of $(F\otimes a)+(u\otimes x)\leftrightarrow a+x\in J\oplus T$.

\smallskip

The above computations are summarized in the next result.

\begin{proposition}\label{pr:JTdicyclic}
Let $(J,T)$ be a $J$-ternary algebra over a field $\bF$ containing the primitive cubic roots $\omega,\omega^2$ of $1$, and let $\frg=\frg(J,T)$ be the attached $BC_1$-graded Lie algebra of type $C_1$ with the above action of the dicyclic group $\Dic_3$ by automorphisms. Then the dicyclic ternary algebra defined on $\frg_\omega$ by means of \eqref{eq:barsquaretriple} is isomorphic to the dicyclic ternary algebra $(A=J\oplus T,\bar{\ },*,\{...\})$ with $\bar a=a$, $\bar x=-x$, for any $a\in J$ and $x\in T$, and with
\begin{equation}\label{eq:JTdicyclic}
\begin{split}
a*b&=0,\\
a*x&=-x*a=-a\bullet x,\\
x*y&=-2\langle x\vert y\rangle,\\
\{a,x,w\}&=0=\{x,a,w\},\\
\{a,b,c\}&=-2\bigl((a\cdot b)\cdot c+a\cdot(b\cdot c)-(a\cdot c)\cdot b\bigr),\\
\{a,b,x\}&=b\bullet(a\bullet x),\\
\{x,y,a\}&=-2\langle a\bullet x\vert y\rangle,\\
\{x,y,z\}&=2\ptriple{x,y,z},
\end{split}
\end{equation}
for any $a,b,c\in J$, $x,y,z\in T$ and $w\in J\oplus T$. \qed
\end{proposition}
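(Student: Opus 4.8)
The plan is to recognize the proposition as a direct repackaging of the explicit bracket computations carried out immediately before its statement, together with one structural citation.

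First I would fix the identifications. With the symplectic basis $\{u,v\}$ and the basis $\{H,E,F\}$ of $\frsl(V)$ from \eqref{eq:matricesHEF}, conjugation by $\phi$ produces the $\bZ_3$-grading
\[
\frg_1=(H\otimes J)\oplus\frd,\qquad \frg_\omega=(F\otimes J)\oplus(u\otimes T),\qquad \frg_{\omega^2}=(E\otimes J)\oplus(v\otimes T),
\]
using only $\phi E\phi^{-1}=\omega^2E$, $\phi F\phi^{-1}=\omega F$, $\phi(u)=\omega u$, $\phi(v)=\omega^2 v$. The linear bijection $(F\otimes a)+(u\otimes x)\mapsto a+x$ then identifies $\frg_\omega$ with $A=J\oplus T$.

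Next I would transport the three operations of \eqref{eq:barsquaretriple} across this bijection. For the involution $\bar{\ }=\theta^2|_{\frg_\omega}$: from $\theta(u)=-v$, $\theta(v)=u$ one gets $\theta^2(u\otimes x)=-u\otimes x$, and from $\theta F\theta^{-1}=-E$, $\theta E\theta^{-1}=-F$ one gets $\theta^2(F\otimes a)=F\otimes a$, so $\bar{\ }$ becomes $a+x\mapsto a-x$. For $x*y=\theta^{-1}([x,y])$ and $\{x,y,z\}=[[x,\theta(y)],z]$ I would substitute the bracket formulas \eqref{eq:g[]}, using $\gamma_{u,v}=-H$, $\gamma_{u,u}=2E$, $\trace(FE)=1$, and the action of $\theta$ on $E,F$. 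These are precisely the displayed computations; the only simplifications that are not immediate are $(a\cdot b)\bullet x-2D_{a,b}(x)=b\bullet(a\bullet x)$, which follows from \eqref{eq:Dabx} and \eqref{eq:abx}; $-2\langle x\vert y\rangle\cdot a-d_{x,y}(a)=-2\langle a\bullet x\vert y\rangle$, from \eqref{eq:axy} and \eqref{eq:dxya}; and $\langle x\vert y\rangle\bullet z-d_{x,y}(z)=2\ptriple{x,y,z}$, which is just \eqref{eq:(xyz)ds}. Collecting everything yields exactly \eqref{eq:JTdicyclic}.

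Finally I would note that $(A,\bar{\ },*,\{...\})$ really is a $D$-ternary algebra: since $\frg(J,T)$ carries an action of $\Dic_3$ by automorphisms, Theorem \ref{th:dicyclic} ensures that $\frg_\omega$ equipped with \eqref{eq:barsquaretriple} has $*$ anticommutative, $\bar{\ }$ an order $2$ automorphism of both products, and satisfies \textbf{(D1)--(D5)}; all of this transports across the isomorphism. There is no genuine obstacle here --- the mathematical content lives entirely in the earlier sections --- so the one thing to watch is sign and scalar bookkeeping: the minus signs produced by $\theta$ on $E$, $F$, $u$, $v$ and the factors $2$ in $\gamma_{u,u}=2E$, $\gamma_{v,v}=-2F$ must be tracked consistently, since a slip there would corrupt, for instance, the coefficient in $x*y=-2\langle x\vert y\rangle$ or the sign in $\bar x=-x$.
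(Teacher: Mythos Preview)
Your proposal is correct and matches the paper's approach exactly: the proposition is stated with a \qed because its proof consists precisely of the explicit bracket computations you outline, carried out in the text immediately preceding it, with the identification $(F\otimes a)+(u\otimes x)\leftrightarrow a+x$ and the same simplifications via \eqref{eq:Dabx}, \eqref{eq:abx}, \eqref{eq:axy}, \eqref{eq:dxya}, and \eqref{eq:(xyz)ds}. Your explicit appeal to Theorem~\ref{th:dicyclic} for the $D$-ternary axioms is a harmless clarification the paper leaves implicit.
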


\medskip

Let $(J,T)$ be a $J$-ternary algebra, and let $A=J\oplus T$ be the associated dicyclic ternary algebra as in Proposition \ref{pr:JTdicyclic}. Due to \eqref{eq:JTdicyclic}, the unity $1$ of the Jordan algebra $J$ verifies the following conditions:
\begin{equation}\label{eq:11ax}
\begin{split}
&\{1,1,a\}=-2a=\{a,1,1\},\quad \{1,1,x\}=x,\quad\{x,1,1\}=0,\\
&\{a,1,b\}=-2a\cdot b,\quad 1*a=0,\quad 1*x=-x,
\end{split}
\end{equation}
for any $a\in J$ and $x\in T$.

Our next purpose is to show that the existence of an element with these properties characterizes the dicyclic ternary algebras coming from a $J$-ternary algebra. Actually a stronger result (Corollary \ref{co:dicyclicJT}) will be proved.

\medskip

Given a dicyclic ternary algebra $(A,\bar{\ },*,\{...\})$, the automorphism $x\mapsto \bar x$ induces a $\bZ_2$-grading: $A=A_0\oplus A_1$, where $A_0=\{x\in A:\bar x=x\}$ and $A_1=\{x\in A:\bar x=-x\}$. Usually the elements of $A_0$ will be denoted by $a,b,...$, while the elements of $A_1$ by $x,y,...$ in what follows.

\smallskip

\begin{lemma}\label{le:A*A0}
Let $(A,\bar{\ },*,\{...\})$ be a dicyclic ternary algebra which contains an element $e\in A_0$ such that $\{e,e,e\}=-2e$ and $\tau(e,e)\,(=\{e,e,.\})$ acts as a scalar on both $A_0$ and $A_1$. Then $\{e,e,a\}=-2a$ for any $a\in A_0$, $A_0*A_0=0$ and either $A*A=0$ or $\{e,e,x\}=x$ for any $x\in A_1$.
\end{lemma}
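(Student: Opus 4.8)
The plan is to derive everything from axiom \textbf{(D2)} with all its arguments specialized to $e$, together with the fact that $\bar{\ }$ is an automorphism of both products. First I would write $\tau(e,e)|_{A_0}=\lambda\,\mathrm{id}$ and $\tau(e,e)|_{A_1}=\mu\,\mathrm{id}$ for scalars $\lambda,\mu\in\bF$; this is meaningful because $\bar{\ }$ being an automorphism of $\{\cdot,\cdot,\cdot\}$ forces $\tau(e,e)$ to preserve the $\bZ_2$-grading $A=A_0\oplus A_1$. Since $e\in A_0$ and $\{e,e,e\}=-2e$, we have $\lambda e=-2e$, hence $\lambda=-2$ (here $e\ne0$, which is the intended setting). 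This already gives the first assertion, $\{e,e,a\}=-2a$ for all $a\in A_0$.

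Next I would record that $\bar{\ }$ being an automorphism of $*$ yields $A_0*A_0\subseteq A_0$, $A_1*A_1\subseteq A_0$ and $A_0*A_1\subseteq A_1$, and then specialize \textbf{(D2)} to $u=v=e$ (so that $\bar v=e$), obtaining
\[
\{e,e,x*y\}+\{e,e,x\}*y+x*\{e,e,y\}=0
\]
for all $x,y\in A$. Putting $x=a$ and $y=b$ in $A_0$, and using $a*b\in A_0$ together with $\lambda=-2$, this collapses to $-6\,(a*b)=0$; since $\charac\bF\ne2,3$ we conclude $a*b=0$, i.e. $A_0*A_0=0$, which is the second assertion.

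For the final alternative I would apply the same specialized identity in two further cases. With $a\in A_0$ and $y\in A_1$ (so $a*y\in A_1$, whence $\{e,e,a*y\}=\mu(a*y)$) it becomes $(2\mu-2)(a*y)=0$; with $x,y\in A_1$ (so $x*y\in A_0$, whence $\{e,e,x*y\}=-2(x*y)$) it becomes $(2\mu-2)(x*y)=0$. Since $\charac\bF\ne2$, either $\mu=1$ --- that is, $\{e,e,x\}=x$ for every $x\in A_1$ --- or else $A_0*A_1=0=A_1*A_1$, which together with $A_0*A_0=0$ and the anticommutativity of $*$ gives $A*A=0$. This is precisely the stated dichotomy.

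There is no real obstacle here: the whole argument is a short chain of substitutions into \textbf{(D2)}. The only things to keep track of are that $*$ respects the $\bZ_2$-grading --- so that the scalars $\lambda$ and $\mu$ may legitimately be applied to $a*b$, $a*y$ and $x*y$ --- the repeated use of $\charac\bF\ne2,3$ to divide by $6$ and by $2\mu-2$ when $\mu\ne1$, and the hypothesis $e\ne0$ which is needed already to get $\lambda=-2$.
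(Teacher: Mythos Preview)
Your proof is correct and follows essentially the same route as the paper's: specialize \textbf{(D2)} to $u=v=e$, then read off the three cases according to the $\bZ_2$-grading of the two $*$-arguments. The only differences are cosmetic --- you make explicit why $\tau(e,e)$ and $*$ respect the grading and flag the implicit hypothesis $e\ne 0$, whereas the paper takes these for granted.
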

\begin{proof}
As $\{e,e,e\}=-2e$, our hypotheses imply $\{e,e,a\}=-2a$ for any $a\in A_0$. Let $\alpha\in\bF$ such that $\{e,e,x\}=\alpha x$ for any $x\in A_1$. Now equation \textbf{(D2)} implies $\{e,e,s*t\}+\{e,e,s\}*t+s*\{e,e,t\}=0$ for any $s,t\in A$. For $s=a\in A_0$ and $t=b\in A_0$ we get $-6a*b=0$, so that $A_0*A_0=0$. For $s=a\in A_0$ and $t=x\in A_1$ we get $(2\alpha-2)a*x=0$, so either $\alpha=1$ or $A_0*A_1=0$. Finally, for $s=x\in A_1$ and $t=y\in A_1$, we obtain $(2-2\alpha)x*y=0$, so either $\alpha=1$ or $A_1*A_1=0$. Therefore either $\alpha=1$ or $A*A=0$ as required.
\end{proof}

\begin{lemma}\label{le:A0A1A}
Let $(A,\bar{\ },*,\{...\})$ be a dicyclic ternary algebra which contains an element $e\in A_0$ such that $\{e,e,a\}=-2a$ for any $a\in A_0$, and $\{e,e,x\}=x$ and $\{x,e,e\}=0$ for any $x\in A_1$. Then $\{A_0,A_1,A\}=0=\{A_1,A_0,A\}$.
\end{lemma}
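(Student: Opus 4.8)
The plan is to get everything out of the single identity \textbf{(D5)}, using that $D:=\{e,e,\,\cdot\,\}$ acts as the scalar $-2$ on $A_0$ and as the scalar $1$ on $A_1$. First I would record two structural observations. Specializing \textbf{(D5)} to $u=v=e$ and using $\bar e=e$ (since $e\in A_0$) shows that $D$ is a twisted derivation of the triple product, $D\{p,q,r\}=\{Dp,q,r\}-\{p,Dq,r\}+\{p,q,Dr\}$; also, since $\bar{\ }$ is an automorphism of $\{...\}$, the $\bZ_2$-grading $A=A_0\oplus A_1$ is compatible with the triple product, so $\{A_0,A_1,A_0\}\subseteq A_1$, $\{A_0,A_1,A_1\}\subseteq A_0$, $\{A_1,A_0,A_0\}\subseteq A_1$ and $\{A_1,A_0,A_1\}\subseteq A_0$.

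The core is a short bootstrap in three rounds. \emph{Round 1:} applying the twisted-derivation identity to $\{a,x,b\}$ with $a,b\in A_0$, $x\in A_1$ gives $D\{a,x,b\}=(-2-1-2)\{a,x,b\}=-5\{a,x,b\}$; but $\{a,x,b\}\in A_1$, so $D\{a,x,b\}=\{a,x,b\}$, whence $6\{a,x,b\}=0$ and $\{a,x,b\}=0$ (here $\charac\bF\ne 2,3$). In particular $\{a,x,e\}=0$. \emph{Round 2:} feed this into \textbf{(D5)} with $(u,v,x,y,z)=(x,a,e,e,e)$. The left side is $\{x,a,\{e,e,e\}\}=-2\{x,a,e\}$; on the right the first term $\{\{x,a,e\},e,e\}$ vanishes because $\{x,a,e\}\in A_1$ and $\{A_1,e,e\}=0$ by hypothesis, the middle term $-\{e,\{a,\bar x,e\},e\}=\{e,\{a,x,e\},e\}=0$ because $\{a,x,e\}=0$, and the last term $\{e,e,\{x,a,e\}\}=D\{x,a,e\}=\{x,a,e\}$ since $\{x,a,e\}\in A_1$. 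This gives $-2\{x,a,e\}=\{x,a,e\}$, hence $\{x,a,e\}=0$.

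\emph{Round 3:} with the two base cases $\{a,x,e\}=0=\{x,a,e\}$ secured, the four remaining cases follow by the same device, writing $b=-\tfrac12\{e,e,b\}$ for $b\in A_0$ and $y=\{e,e,y\}$ for $y\in A_1$ and applying \textbf{(D5)} with the second triple always of the form $(e,e,\cdot)$. For $\{x,a,b\}$ one uses $(u,v,x,y,z)=(x,a,e,e,b)$: the first right-hand term contains $\{x,a,e\}=0$, the middle term contains $\{a,\bar x,e\}=-\{a,x,e\}=0$, and the last term is $D\{x,a,b\}=\{x,a,b\}$ since $\{x,a,b\}\in A_1$, so $-2\{x,a,b\}=\{x,a,b\}$ and $\{x,a,b\}=0$. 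The computations for $\{a,x,y\}$ (via $(a,x,e,e,y)$) and $\{x,a,y\}$ (via $(x,a,e,e,y)$) are identical in shape, except that the surviving $D$-term now reads $D\{a,x,y\}=-2\{a,x,y\}$, resp. $D\{x,a,y\}=-2\{x,a,y\}$, because these products lie in $A_0$; one thus gets $\{a,x,y\}=-2\{a,x,y\}$, resp. $\{x,a,y\}=-2\{x,a,y\}$, again forcing the product to vanish. Linearity over $A=A_0\oplus A_1$ then yields $\{A_0,A_1,A\}=0=\{A_1,A_0,A\}$.

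The only delicate point is the order of the rounds: one cannot attack the generic case first, because each application of \textbf{(D5)} with second slot $(e,e,\cdot)$ collapses only after the "$e$ in the third slot" identities $\{a,x,e\}=0$ and $\{x,a,e\}=0$ are available, and the second of these in turn needs the first. Once that dependency is respected — and one keeps track of parities so as to know whether the surviving $D$-contribution carries the factor $1$ or $-2$ — everything reduces to mechanical substitution in \textbf{(D5)} together with $\charac\bF\ne 2,3$.
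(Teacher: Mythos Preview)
Your argument is correct. It is also genuinely more economical than the paper's: you use only \textbf{(D5)} (together with the $\bZ_2$-grading and $\charac\bF\ne 2,3$), whereas the paper's proof invokes \textbf{(D1)}, \textbf{(D3)}, \textbf{(D4)}, \textbf{(D5)} and the preceding Lemma~\ref{le:A*A0} (for $A_0*A_0=0$, which in turn rests on \textbf{(D2)}). The paper first isolates the special case $\{e,x,e\}=0$ by the same eigenvalue trick you use, then climbs to $\{A_0,A_1,e\}=0$ via the binary product and \textbf{(D3)}, and to $\{A_1,A_0,e\}=0$ via \textbf{(D4)}, before finishing with a $\tau(e,e)$-commutator computation that is essentially your Round~3. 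Your Round~1 short-circuits that intermediate climb by applying the eigenvalue argument directly to the generic element $\{a,x,b\}$ rather than only to $\{e,x,e\}$; this is the key observation that lets you dispense with $*$ and with axioms \textbf{(D1)}--\textbf{(D4)} entirely. The paper's route, on the other hand, has the side benefit of exhibiting the identities $A_1*e=A_1$ and the interplay of the binary and ternary products, which are used again later in the section; your proof is self-contained but does not produce these byproducts.
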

\begin{proof}
Note first that for $x\in A_1$,
\[
\begin{split}
\{e,x,e\}&=\{e,e,\{e,x,e\}\}\\
    &=\{\{e,e,e\},x,e\}-\{e,\{e,e,x\},e\}+\{e,x,\{e,e,e\}\}\quad\text{using \textbf{(D5)}}\\
    &=-2\{e,x,e\}-\{e,x,e\}-2\{e,x,e\}=-5\{e,x,e\},
\end{split}
\]
so we get $\{e,A_1,e\}=0$.

For $a\in A_0$ and $x\in A_1$, \textbf{(D3)} gives $\{e,a*x,e\}+\{a,x*e,e\}+\{x,e*a,e\}=0$, but $e*a=0$ (Lemma \ref{le:A*A0}), and $\{e,a*x,e\}\in\{e,A_1,e\}=0$. We conclude that $\{A_0,A_1*e,e\}=0$ holds. But \textbf{(D1)} gives $x=\{e,e,x\}-\{x,e,e\}=-(e*x)*e=(x*e)*e$. Hence $A_1*e=A_1$ and $\{A_0,A_1,e\}=0$.

Now \textbf{(D4)} gives $\{a*x,e,e\}-\{x*e,a,e\}+\{e*a,x,e\}=0$, but $\{a*x,e,e\}\in\{A_1,e,e\}=0$ and $e*A_0\in A_0*A_0=0$, so we conclude $\{A_1,A_0,e\}=0$.

Therefore, $\{A_0,A_1,e\}=\{A_1,A_0,e\}=0$. Finally, \textbf{(D5)} gives
\[
[\tau(e,e),\tau(a,x)]=\tau\bigl(\{e,e,a\},x\bigr)-\tau\bigl(a,\{e,e,x\}\bigr)=-3\tau(a,x),
\]
but also
\[
[\tau(e,e),\tau(a,x)]=-[\tau(a,x),\tau(e,e)] =-\tau\bigl(\{a,x,e\},e\bigr)-\tau\bigl(e,\{x,a,e\}\bigr)=0.
\]
Thus $\tau(a,x)=0$, and in the same vein we get $\tau(x,a)=0$, as required.
\end{proof}

\smallskip

Under the conditions of the previous Lemma, the Lie algebra $\frg(A)=\tau(A,A)\oplus \iota_1(A)\oplus\iota_2(A)$ in Theorem \ref{th:conversedicyclic} satisfies that its subalgebra $\tau(A,A)=\tau(A_0,A_0)+\tau(A_1,A_1)$ contains the elements
\[
H=\tau(e,e),\quad E=-\iota_2(e),\quad F=\iota_1(e),
\]
which satisfy $[E,F]=H$, $[H,E]=2E$, and $[H,F]=-2F$, and hence span a subalgebra isomorphic to $\frsl_2(\bF)$. Moreover, the following conditions hold:
\[
\begin{aligned}
\null [E,\iota_1(A_1)]&=-\tau(A_1,e)=0,&\qquad [F,\iota_2(A_1)]&=\tau(e,A_1)=0,\\
 [F,\iota_1(x)]&=\iota_2(e*x),&\qquad [E,\iota_2(x)]&=\iota_1(e*x),\\
 [H,\iota_1(x)]&=\iota_1(x),&\qquad[H,\iota_2(x)]&=-\iota_2(x),
\end{aligned}
\]
for any $x\in A_1$.

Hence for any $0\ne x\in A_1$, the vector space $\bF\iota_1(x)+\bF\iota_2(e*x)$ is a two-dimensional irreducible module for our copy of $\frsl_2(\bF)$, since $e*(e*x)=x$.

Also, we have $[E,\iota_2(A_0)]=0=[F,\iota_1(A_0)]$ as $e*A_0=0$, and therefore, for any $0\ne a\in A_0$, the vector space $\bF\iota_1(a)+\bF\tau(a,e)+\bF\iota_2(\{e,a,e\})$ is isomorphic to the adjoint module for $\frsl_2(\bF)$ under the map $F\mapsto \iota_1(a)$, $H\mapsto \tau(a,e)$ and $E\mapsto \frac{1}{2}\iota_2(\{e,a,e\})$, because $[E,\iota_1(a)]=-[\iota_2(e),\iota_1(a)]=\tau(a,e)$, and $[E,\tau(a,e)]=[\tau(a,e),\iota_2(e)]=-\iota_2(\{e,a,e\}$ (while $[E,H]=-2E$). Note that for any $a\in A_0$, \textbf{(D5)} gives $4a=\{a,e,\{e,e,e\}\}=\{\{a,e,e\},e,e\}-\{e,\{e,a,e,\},e\} +\{e,e,\{a,e,e\}\}=8a-\{e,\{e,a,e\},e\}$, so that $\{e,\{e,a,e\},e\}=4a$, and in particular $\{e,a,e\}\ne 0$ if $a\ne 0$.

We summarize these arguments in the following result:

\begin{lemma}\label{le:adjointandtwo}
Let $(A,\bar{\ },*,\{...\})$ be a dicyclic ternary algebra which contains an element $e\in A_0$ such that $\{e,e,a\}=-2a$ for any $a\in A_0$, and $\{e,e,x\}=x$ and $\{x,e,e\}=0$ for any $x\in A_1$. Then
\begin{itemize}
\item For any $0\ne a\in A_0$, the vector space $\bF\iota_1(a)+\bF\tau(a,e)+\bF\iota_2(\{e,a,e\})$ is isomorphic to the adjoint module for $\frsl_2(\bF)$ under the map $F\mapsto \iota_1(a)$, $H\mapsto \tau(a,e)$ and $E\mapsto \frac{1}{2}\iota_2(\{e,a,e\})$
\item For any $0\ne x\in A_1$, the vector space $\bF\iota_1(x)+\bF\iota_2(e*x)$ is a two-dimensional irreducible module for $\frsl_2(\bF)$. \qed
\end{itemize}
\end{lemma}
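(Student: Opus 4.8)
The plan is to work entirely inside the Lie algebra $\frg(A)=\tau(A,A)\oplus\iota_1(A)\oplus\iota_2(A)$ produced from $A$ by Theorem \ref{th:conversedicyclic}, and to recognize both subspaces as $\frsl_2(\bF)$-submodules for the copy of $\frsl_2(\bF)$ spanned by $H=\tau(e,e)$, $E=-\iota_2(e)$, $F=\iota_1(e)$. First I would assemble the facts already at hand: the hypotheses on $e$ are precisely those of Lemmas \ref{le:A*A0} and \ref{le:A0A1A}, so $A_0*A_0=0$, $\{a,e,e\}=-2a$ for $a\in A_0$ (from \textbf{(D1)} applied to $a,e,e$, since $a*e\in A_0*A_0=0$), and $\{A_0,A_1,A\}=0=\{A_1,A_0,A\}$; moreover $\{e,a,e\}\in A_0$ and $e*x\in A_1$ because $\bar{\ }$ is an automorphism of both products, and $e*(e*x)=x$ (again by \textbf{(D1)}). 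Using the brackets of Theorem \ref{th:conversedicyclic} one checks $[E,F]=H$, $[H,E]=2E$, $[H,F]=-2F$, so $\espan{H,E,F}\cong\frsl_2(\bF)$.

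Next I would compute the $\frsl_2(\bF)$-action on the three prospective generators for the first claim, directly from the bracket formulas and the vanishings above. For $0\ne a\in A_0$ this yields $[H,\iota_1(a)]=\iota_1(\{e,e,a\})=-2\iota_1(a)$, $[E,\iota_1(a)]=\tau(a,e)$, $[E,\tau(a,e)]=-\iota_2(\{e,a,e\})$, $[E,\iota_2(\{e,a,e\})]=-\iota_1(\overline{e*\{e,a,e\}})=0$ (since $e*\{e,a,e\}\in A_0*A_0=0$), $[H,\tau(a,e)]=0$, $[H,\iota_2(\{e,a,e\})]=2\iota_2(\{e,a,e\})$, $[F,\iota_1(a)]=\iota_2(e*a)=0$, and $[F,\tau(a,e)]=2\iota_1(a)$. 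Hence $\iota_1(a)$ has $H$-weight $-2$ and is annihilated by $F$, so it is a lowest weight vector, and $\espan{\iota_1(a),E\iota_1(a),E^2\iota_1(a)}=\bF\iota_1(a)+\bF\tau(a,e)+\bF\iota_2(\{e,a,e\})$ is an $\frsl_2(\bF)$-submodule with $E^3\iota_1(a)=0$. Since \textbf{(D5)} applied to $\{a,e,\{e,e,e\}\}$ gives $\{e,\{e,a,e\},e\}=4a$, one has $\{e,a,e\}\ne0$ whenever $a\ne0$; as the three generators lie in the distinct summands $\iota_1(A),\tau(A,A),\iota_2(A)$ and are nonzero, the submodule is $3$-dimensional, hence irreducible, that is the adjoint module, under $F\mapsto\iota_1(a)$, $H\mapsto\tau(a,e)$, $E\mapsto\frac{1}{2}\iota_2(\{e,a,e\})$. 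The only bracket not computed above, $[F,\iota_2(\{e,a,e\})]=-2\tau(a,e)$, then follows automatically (or from $[F,E]=-H$).

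For the second claim the computation is shorter. For $0\ne x\in A_1$ one gets $[H,\iota_1(x)]=\iota_1(x)$, $[E,\iota_1(x)]=\tau(x,e)=0$ (by Lemma \ref{le:A0A1A}), $[F,\iota_1(x)]=\iota_2(e*x)$, and, since $e*x\in A_1$ and $e*(e*x)=x$, also $[H,\iota_2(e*x)]=-\iota_2(e*x)$, $[F,\iota_2(e*x)]=\tau(e,e*x)=0$ and $[E,\iota_2(e*x)]=-\iota_1(\overline{e*(e*x)})=\iota_1(x)$. Thus $\iota_1(x)$ is a highest weight vector of weight $1$, $F^2\iota_1(x)=0$, the generators are nonzero (note $e*x\ne0$ because $e*(e*x)=x\ne0$) and lie in distinct summands, so $\bF\iota_1(x)+\bF\iota_2(e*x)$ is the $2$-dimensional irreducible $\frsl_2(\bF)$-module.

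I do not expect a genuine obstacle once Lemmas \ref{le:A*A0} and \ref{le:A0A1A} are available: the whole argument is bookkeeping with the $\bZ_2$-grading, the involution $\bar{\ }$, and the explicit brackets of $\frg(A)$. The one spot needing slightly more than a one-line computation is the bracket $[F,\iota_2(\{e,a,e\})]=-2\tau(a,e)$, which I would obtain from $\frsl_2(\bF)$-representation theory — a lowest weight vector of weight $-2$ killed by $F$ generates the $3$-dimensional module — rather than by grinding it out of \textbf{(D5)}; the accompanying nonvanishing $\{e,a,e\}\ne0$ is the other small input, supplied by \textbf{(D5)}.
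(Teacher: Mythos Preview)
Your proposal is correct and follows essentially the same route as the paper: the paper's proof is the discussion immediately preceding the lemma, which likewise works inside $\frg(A)$ with $H=\tau(e,e)$, $E=-\iota_2(e)$, $F=\iota_1(e)$, computes the same brackets using Theorem~\ref{th:conversedicyclic} together with Lemmas~\ref{le:A*A0} and~\ref{le:A0A1A}, and invokes \textbf{(D5)} on $\{a,e,\{e,e,e\}\}$ to get $\{e,\{e,a,e\},e\}=4a$ and hence $\{e,a,e\}\ne 0$. Your only cosmetic deviation is phrasing the first item via a lowest weight vector and reading off $[F,\iota_2(\{e,a,e\})]$ from the $\frsl_2$ commutation relations rather than writing it as $\tau(e,\{e,a,e\})$; this is equivalent.
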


\smallskip

An extra previous result is still needed:

\smallskip

\begin{lemma}\label{le:tauAACentr}
Let $(A,\bar{\ },*,\{...\})$ be a dicyclic ternary algebra which contains an element $e\in A_0$ such that $\{e,e,a\}=-2a$ for any $a\in A_0$, and $\{e,e,x\}=x$ and $\{x,e,e\}=0$ for any $x\in A_1$. Then $\tau(A,A)=\tau(A_0,e)\oplus \Cent_{\frg(A)}\bigl(\frsl_2(\bF)\bigr)$ (where $\Cent_{\frg(A)}\bigl(\frsl_2(\bF)\bigr)$ denotes the centralizer in $\frg(A)$ of our subalgebra $\frsl_2(\bF)=\bF H\oplus\bF E\oplus\bF F$).
\end{lemma}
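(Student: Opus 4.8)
The plan is to exploit the $5$-grading that the $\frsl_2(\bF)$-triple $\{H,E,F\}$ of Lemma~\ref{le:adjointandtwo} puts on $\frg(A)$, together with the invertibility of the map $a\mapsto\{e,a,e\}$ on $A_0$ that is implicit in the proof of that lemma. First I would record the eigenspace decomposition of $\ad H$, with $H=\tau(e,e)$, on $\frg(A)=\tau(A,A)\oplus\iota_1(A)\oplus\iota_2(A)$: using the bracket formulas of Theorem~\ref{th:conversedicyclic} together with $\{e,e,a\}=-2a$ and $\{e,e,x\}=x$ for $a\in A_0$, $x\in A_1$, one checks that $\ad H$ acts by the scalar $-2$ on $\iota_1(A_0)$, by $1$ on $\iota_1(A_1)$, by $0$ on $\tau(A,A)$, by $-1$ on $\iota_2(A_1)$ and by $2$ on $\iota_2(A_0)$. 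Since $\charac\bF\ne 2$ the nonzero weights are genuinely nonzero, so $\ker(\ad H)=\tau(A,A)$; in particular $\Cent_{\frg(A)}\bigl(\frsl_2(\bF)\bigr)\subseteq\tau(A,A)$, and for every $z\in\tau(A,A)$ the relation $[H,[E,z]]=2[E,z]$ forces $[E,z]\in\iota_2(A_0)$, and likewise $[F,z]\in\iota_1(A_0)$.

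The second ingredient is the linear map $\psi\colon A_0\to A_0$, $\psi(a)=\{e,a,e\}$. The identity $\{e,\{e,a,e\},e\}=4a$ established in the proof of Lemma~\ref{le:adjointandtwo} (a one-line consequence of \textbf{(D5)}) says $\psi^2=4\,\mathrm{id}_{A_0}$, so $\psi$ is bijective with inverse $\tfrac14\psi$. I would also write out the two brackets $[E,\tau(a,e)]=-\iota_2\bigl(\psi(a)\bigr)$ and $[E,\iota_1(a)]=\tau(a,e)$ for $a\in A_0$, both of which come straight out of the formulas in Theorem~\ref{th:conversedicyclic} once one uses $\bar a=a$.

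Directness of the claimed sum is then immediate: if $\tau(a,e)$ centralizes $E$ then $\iota_2\bigl(\psi(a)\bigr)=-[E,\tau(a,e)]=0$, so $\psi(a)=0$ and hence $a=0$; thus $\tau(A_0,e)\cap\Cent_{\frg(A)}\bigl(\frsl_2(\bF)\bigr)=0$. For the spanning part, take $z\in\tau(A,A)$ and write $[E,z]=\iota_2(b)$ with $b\in A_0$. Choosing $a=-\tfrac14\psi(b)$, so that $\psi(a)=-b$, gives $[E,\tau(a,e)]=\iota_2(b)=[E,z]$, hence $w:=z-\tau(a,e)\in\tau(A,A)$ satisfies $[H,w]=0=[E,w]$. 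Then $[E,[F,w]]=[[E,F],w]+[F,[E,w]]=[H,w]=0$; writing $[F,w]=\iota_1(c)$ with $c\in A_0$ this reads $\tau(c,e)=[E,\iota_1(c)]=0$, whence $\iota_2\bigl(\psi(c)\bigr)=-[E,\tau(c,e)]=0$, so $c=0$ and $[F,w]=0$. Therefore $w\in\Cent_{\frg(A)}\bigl(\frsl_2(\bF)\bigr)$ and $z=\tau(a,e)+w$, which together with $\tau(A_0,e)\subseteq\tau(A,A)$ gives the asserted decomposition.

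The step that requires a little thought — and the only real obstacle — is the last one: after subtracting the single element $\tau(a,e)$ to kill $[E,z]$ one automatically gets $[F,w]=0$, so there is no independent $F$-correction to make. This is exactly the injectivity of $a\mapsto\tau(a,e)$ on $A_0$, which is once more the invertibility of $\psi$; granting that, everything else is routine bookkeeping with the grading and the explicit brackets of $\frg(A)$.
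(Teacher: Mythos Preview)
Your proof is correct, but it takes a genuinely different route from the paper's. The paper proceeds by an explicit case splitting: after noting that $\tau(A,A)=\tau(A_0,A_0)+\tau(A_1,A_1)$ (via Lemma~\ref{le:A0A1A}), it writes down concrete decompositions. For $\tau(A_1,A_1)$ it uses \textbf{(D4)} to obtain $\tau(e*x,y)=\tfrac12\tau(x*y,e)+\tfrac12\bigl(\tau(e*x,y)+\tau(e*y,x)\bigr)$ and checks by hand, using \textbf{(D1)}, that the symmetric part commutes with $E$ and $F$. For $\tau(A_0,A_0)$ it derives from \textbf{(D1)} and \textbf{(D5)} the relation $2\tau(\{a,e,b\},e)=\tau(a,\{e,b,e\})+\tau(b,\{e,a,e\})$, then verifies (again by direct computation with \textbf{(D5)}) that $\tau(a,\{e,b,e\})-\tau(b,\{e,a,e\})$ centralizes $\frsl_2(\bF)$, and finally uses surjectivity of $a\mapsto\{e,a,e\}$ to conclude. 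Directness is obtained via $[F,\tau(a,e)]=2\iota_1(a)$ rather than via $[E,\tau(a,e)]$.

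Your argument is more module-theoretic and avoids the case analysis: once the $\ad H$-eigenspace picture is set up, you simply observe that $[E,-]$ sends $\tau(A,A)$ into $\iota_2(A_0)$ and that $[E,\tau(a,e)]=-\iota_2(\psi(a))$ with $\psi$ invertible, so a single $\tau(A_0,e)$-correction kills $[E,z]$; the relation $[E,F]=H$ then forces $[F,w]=0$ automatically. This is shorter and explains conceptually why only one correction is needed, whereas the paper's computations yield explicit formulas for the centralizer component (e.g.\ the symmetric expressions above), which are useful later when the $J$-ternary structure on $(A_0,A_1)$ is extracted.
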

\begin{proof}
The element $H=\tau(e,e)$ acts diagonally with eigenvalues $-2$ on $\iota_1(A_0)$, $2$ on $\iota_2(A_0)$, $1$ on $\iota_1(A_1)$, $-1$ on $\iota_2(A_1)$ and $0$ on $\tau(A,A)=\tau(A_0,A_0)+\tau(A_1,A_1)$ (see Theorem \ref{th:conversedicyclic} for the bracket on the Lie algebra $\frg(A)$). In particular we have
\[
\Cent_{\frg(A)}\bigl(\frsl_2(\bF)\bigr)\subseteq\{X\in\frg(A):[H,X]=0\}=\tau(A,A).
\]
Also note that for $0\ne a\in A_0$ and $0\ne x\in A_1$, $[F,\tau(a,e)]=-[\tau(a,e),\iota_1(e)]=-\iota_1(\{a,e,e\})=2\iota_1(a)\ne 0$, so we get
\begin{equation}\label{eq:tauA0ecapCentr}
\tau(A_0,e)\cap \Cent_{\frg(A)}\bigl(\frsl_2(\bF)\bigr)=0.
\end{equation}

Now, for $x,y\in A_1$, \textbf{(D4)} gives
\[
-\tau(x*e,y)-\tau(e*y,x)+\tau(y*x,e)=0,
\]
which we rewrite as
\[
\tau(x*y,e)=\tau(e*x,y)-\tau(e*y,x).
\]
Hence we get
\begin{equation}\label{eq:tauexy}
\tau(e*x,y)=\frac{1}{2}\tau(x*y,e)+\frac{1}{2}\bigl(\tau(e*x,y)+\tau(e*y,x)\bigr),
\end{equation}
and
\begin{equation}\label{eq:tauexyeyx}
\begin{split}
[\tau(e*x,y)+\tau(e*y,x),\iota_1(e)]&=\iota_1\bigl(\{e*x,y,e\}+\{e*y,x,e\}\bigr)\\
[\tau(e*x,y)+\tau(e*y,x),\iota_2(e)]&=\iota_2\bigl(\{y,e*x,e\}+\{x,e*y,e\}\bigr).
\end{split}
\end{equation}
But $\tau(A_0,A_1)=0$ (Lemma \ref{le:A0A1A}) and \textbf{(D1)} gives:
\[
\{e*x,y,e\}=((e*x)*e)*y=-\{x,e,e\}*y=-x*y,
\]
so
\[
\{e*x,y,e\}+\{e*y,x,e\}=-(x*y+y*x)=0.
\]
In a similar way we get $\{x,e*y,e\}=(x*e)*(e*y)$, so
\[
\{x,e*y,e\}+\{y,e*x,e\}=-\bigl((e*x)*(e*y)+(e*y)*(e*x)\bigr)=0.
\]
It follows now from \eqref{eq:tauexyeyx} that
\[
[\tau(e*x,y)+\tau(e*y,x),E]=0=[\tau(e*x,y)+\tau(e*y,x),F],
\]
so that the element $\tau(e*x,y)+\tau(e*y,x)$ is in $\Centr_{\frg(A)}\bigl(\frsl_2(\bF)\bigr)$, and hence \eqref{eq:tauexy} shows that
\begin{equation}\label{eq:tauA1A1}
\tau(A_1,A_1)=\tau(e*A_1,A_1)\subseteq \tau(A_0,e)+\Centr_{\frg(A)}\bigl(\frsl_2(\bF)\bigr).
\end{equation}
Finally, for any $a,b\in A_0$ we have:
\[
\begin{split}
[\tau(a,e),\tau(b,e)]&=\tau(\{a,e,b\},e)-\tau(b,\{e,a,e\}),\\
[\tau(a,e),\tau(b,e)]&=-[\tau(b,e),\tau(a,e)]=-\tau(\{b,e,a\},e)+\tau(a,\{e,b,e\}).
\end{split}
\]
Hence, as $\{a,e,b\}=\{b,e,a\}$ by \textbf{(D1)} (as $A_0*A_0=0$ by Lemma \ref{le:A*A0}), we get
\[
2\tau(\{a,e,b\},e)=\tau(a,\{e,b,e\})+\tau(b,\{e,a,e\}),
\]
and
\begin{equation}\label{eq:tauaebe}
\tau(a,\{e,b,e\})=\tau(\{a,e,b\},e) +\frac{1}{2}\bigl(\tau(a,\{e,b,e\}-\tau(b,\{e,a,e\}\bigr).
\end{equation}
Also,
\[
\begin{split}
-2\{a,e,b\}&=\{a,e,\{b,e,e\}\}\\
    &=\{\{a,e,b\},e,e\}-\{b,\{e,a,e\},e\}+\{b,e,\{a,e,e\}\}\\
    &=-2\{a,e,b\}-\{b,\{e,a,e\},e\}-2\{b,e,a\}.
\end{split}
\]
Hence $\{b,\{e,a,e\},e\}=2\{b,e,a\}$, which is symmetric on $a,b$ by \textbf{(D1)}. Therefore,
\[
\begin{split}
[\tau(a,\{e,b,e\})-\tau(b,\{e,a,e\}),F]
    &=[\tau(a,\{e,b,e\})-\tau(b,\{e,a,e\}),\iota_1(e)]\\
    &=\iota_1\bigl(\{a,\{e,b,e\},e\}-\{b,\{e,a,e\},e\}\bigr)=0.
\end{split}
\]
And because of \textbf{(D1)} and \textbf{(D5)} we have:
\[
\begin{split}
\{\{e,a,e\},b,e\}&=\{e,b,\{e,a,e\}\}\\
&=\{\{e,b,e\},a,e\}-\{e,\{b,e,a\},e\}+\{e,a,\{e,b,e\}\}\\
&=2\{\{e,b,e\},a,e\}-\{e,\{b,e,a\},e\}.
\end{split}
\]
Thus $2\{\{e,b,e\},a,e\}-\{\{e,a,e\},b,e\}=\{e,\{b,e,a\},e\}$ is symmetric on $a,b$. Adding the symmetric element $\{\{e,b,e\},a,e\}+\{\{e,a,e\},b,e\}$ we obtain that $\{\{e,b,e\},a,e\}$ is symmetric on $a,b$. Then we get:
\[
\begin{split}
[\tau(a,\{e,b,e\})-\tau(b,\{e,a,e\}),E]
    &=-[\tau(a,\{e,b,e\})-\tau(b,\{e,a,e\}),\iota_2(e)]\\
    &=\iota_2\bigl(\{\{e,b,e\},a,e\}-\{\{e,a,e\},b,e\}\bigr)=0.
\end{split}
\]
Therefore $\tau(a,\{e,b,e\})-\tau(b,\{e,a,e\})$ belongs to $\Centr_{\frg(A)}\bigl(\frsl_2(\bF)\bigr)$, and equation \eqref{eq:tauaebe} gives:
\begin{equation}\label{eq:tauA0A0}
\tau(A_0,A_0)=\tau(A_0,\{e,A_0,e\})\subseteq \tau(A_0,e)+\Centr_{\frg(A)}\bigl(\frsl_2(\bF)\bigr).
\end{equation}

Equations \eqref{eq:tauA0ecapCentr}, \eqref{eq:tauA1A1} and \eqref{eq:tauA0A0} give the desired result.
\end{proof}

\medskip

\begin{theorem}\label{th:gABC1C1}
Let $(A,\bar{\ },*,\{...\})$ be a dicyclic ternary algebra which contains an element $e\in A_0$ such that $\{e,e,a\}=-2a$ for any $a\in A_0$, and $\{e,e,x\}=x$ and $\{x,e,e\}=0$ for any $x\in A_1$. Then the attached Lie algebra $\frg(A)$ in Theorem \ref{th:conversedicyclic} is a $BC_1$-graded Lie algebra of type $C_1$.
\end{theorem}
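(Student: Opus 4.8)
The plan is to realize $\frsl_2(\bF)$ explicitly inside $\frg(A)$ and then decompose $\frg(A)$ as a module over it, reading off the summands from Lemmas \ref{le:A*A0}--\ref{le:tauAACentr}. First I would put $H=\tau(e,e)$, $E=-\iota_2(e)$, $F=\iota_1(e)$. Since the hypotheses on $e$ force $A=0$ when $e=0$ (a trivial case), we may assume $e\ne 0$; then $H,E,F$ are nonzero and lie in the distinct summands $\tau(A,A)$, $\iota_2(A)$, $\iota_1(A)$ of $\frg(A)$, hence are linearly independent, and the relations $[H,E]=2E$, $[H,F]=-2F$, $[E,F]=H$ recorded just before Lemma \ref{le:adjointandtwo} identify $\bF H\oplus\bF E\oplus\bF F$ with $\frsl_2(\bF)\cong\frsl(V)$ for a two-dimensional vector space $V$. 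It then remains to check that, as an $\frsl(V)$-module, $\frg(A)$ is a direct sum of trivial modules, copies of $V$, and copies of the adjoint module.

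I would next record the $\ad H$-eigenspace decomposition of $\frg(A)$, already worked out in the proof of Lemma \ref{le:tauAACentr}: eigenvalue $-2$ on $\iota_1(A_0)$, $2$ on $\iota_2(A_0)$, $1$ on $\iota_1(A_1)$, $-1$ on $\iota_2(A_1)$, and $0$ on $\tau(A,A)$. Then I would assemble the module decomposition from Lemma \ref{le:adjointandtwo}. Fixing a basis $\{x_i\}$ of $A_1$, each $N_i:=\bF\iota_1(x_i)\oplus\bF\iota_2(e*x_i)$ is an $\frsl(V)$-submodule isomorphic to $V$; since \textbf{(D1)} together with anticommutativity gives $e*(e*x)=x$, the map $x\mapsto e*x$ is an involution of $A_1$, so the weight-$1$ and weight-$(-1)$ parts of $\bigoplus_i N_i$ fill out $\iota_1(A_1)$ and $\iota_2(A_1)$, whence $\bigoplus_i N_i=\iota_1(A_1)\oplus\iota_2(A_1)$. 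Fixing a basis $\{a_j\}$ of $A_0$, each $M_j:=\bF\iota_1(a_j)\oplus\bF\tau(a_j,e)\oplus\bF\iota_2(\{e,a_j,e\})$ is an $\frsl(V)$-submodule isomorphic to the adjoint module; here I would note that $\tau(\cdot,e)\colon A_0\to\tau(A_0,e)$ is injective (if $\tau(a,e)=0$ then $0=[F,\tau(a,e)]=-\iota_1(\{a,e,e\})=2\iota_1(a)$, using \textbf{(D1)} and $A_0*A_0=0$ from Lemma \ref{le:A*A0}) and that $\{e,\cdot,e\}\colon A_0\to A_0$ is injective (from $\{e,\{e,a,e\},e\}=4a$, established in the proof of Lemma \ref{le:tauAACentr}), so that the weight-$(-2)$, weight-$0$ and weight-$2$ parts of $\bigoplus_j M_j$ are exactly $\iota_1(A_0)$, $\tau(A_0,e)$ and $\iota_2(A_0)$, whence $\bigoplus_j M_j=\iota_1(A_0)\oplus\tau(A_0,e)\oplus\iota_2(A_0)$.

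Finally I would bring in $\Cent_{\frg(A)}\bigl(\frsl_2(\bF)\bigr)$, which is a sum of trivial modules, using Lemma \ref{le:tauAACentr}: $\tau(A,A)=\tau(A_0,e)\oplus\Cent_{\frg(A)}\bigl(\frsl_2(\bF)\bigr)$. Comparing $\ad H$-weights shows that
\[
\frg(A)=\Bigl(\bigoplus_j M_j\Bigr)\oplus\Bigl(\bigoplus_i N_i\Bigr)\oplus\Cent_{\frg(A)}\bigl(\frsl_2(\bF)\bigr)
\]
is a direct sum exhausting $\frg(A)$ (the only common weight is $0$, where it splits as $\tau(A_0,e)\oplus\Cent_{\frg(A)}\bigl(\frsl_2(\bF)\bigr)$ by Lemma \ref{le:tauAACentr}), and each summand is trivial, isomorphic to $V$, or isomorphic to the adjoint module $\frsl(V)$. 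By the description of $BC_1$-graded Lie algebras of type $C_1$ recalled at the start of Section \ref{se:BC1C1Jternary}, this is exactly the assertion.

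The step I expect to be the main obstacle — really the only nontrivial point — is the bookkeeping of the last paragraph: ensuring the submodules $M_j$, $N_i$ and $\Cent_{\frg(A)}\bigl(\frsl_2(\bF)\bigr)$ neither overlap nor miss anything. This is where Lemma \ref{le:tauAACentr} is indispensable, and where one needs injectivity of the ``coordinate maps'' $\tau(\cdot,e)$, $\{e,\cdot,e\}$ and $x\mapsto e*x$ so that the weight spaces have the expected dimensions. No representation-theoretic input (in particular no appeal to complete reducibility of $\frsl_2(\bF)$-modules) is needed, since the decomposition is produced explicitly.
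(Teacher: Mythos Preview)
Your proposal is correct and follows essentially the same approach as the paper: both identify $\frsl_2(\bF)$ via $H=\tau(e,e)$, $E=-\iota_2(e)$, $F=\iota_1(e)$ and then read off the module decomposition from Lemmas \ref{le:adjointandtwo} and \ref{le:tauAACentr}, with your version simply spelling out more of the bookkeeping (injectivity of $\tau(\cdot,e)$, $\{e,\cdot,e\}$, and $x\mapsto e*x$). One tiny correction: the identity $\{e,\{e,a,e\},e\}=4a$ is derived in the discussion preceding Lemma \ref{le:adjointandtwo}, not in the proof of Lemma \ref{le:tauAACentr}.
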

\begin{proof}
The two previous lemmas show that we have a decomposition:
\[
\frg(A)=\Bigl(\tau(A_0,e)\oplus\iota_1(A_0)\oplus\iota_2(A_0)\Bigr)\oplus
\Bigl(\iota_1(A_1)\oplus\iota_2(A_1)\Bigr)\oplus \Centr_{\frg(A)}\bigl(\frsl_2(\bF)\bigr),
\]
where
\begin{itemize}
\item $\tau(A_0,e)\oplus\iota_1(A_0)\oplus\iota_2(A_0)$ is a direct sum of copies of the adjoint module for $\frsl_2(\bF)=\bF H+\bF E+\bF F$,
\item $\iota_1(A_1)\oplus\iota_2(A_1)$ is a direct sum of copies of the two-dimensional irreducible module for $\frsl_2(\bF)$,
\item $\Centr_{\frg(A)}\bigl(\frsl_2(\bF)\bigr)$ is a trivial module for $\frsl_2(\bF)$,
\end{itemize}
and this proves the Theorem.
\end{proof}

\medskip

Therefore, under the hypotheses of this Theorem, the Lie algebra $\frg(A)$ can be identified to
\[
\bigl(\frsl(V)\otimes A_0\bigr)\oplus\bigl(V\otimes A_1\bigr)\oplus \frd,
\]
where we fix a symplectic basis $\{u,v\}$ of $V$, take $H,E,F$ the endomorphisms of $V$ with coordinate matrices given in \eqref{eq:matricesHEF}, and identify elements as follows:
\[
\begin{aligned}
H\otimes a&\leftrightarrow \tau(a,e)\\
E\otimes a&\leftrightarrow\frac{1}{2}\iota_2(\{e,a,e\})\\
F\otimes a&\leftrightarrow \iota_1(a)
\end{aligned}
\qquad
\begin{aligned}
u\otimes x&\leftrightarrow \iota_1(x)\\
v\otimes x&\leftrightarrow \iota_2(e*x)
\end{aligned}
\]
for any $a\in A_0$ and $x\in A_1$. Also we take the subalgebra $\frd=\Centr_{\frg(A)}\bigl(\frsl_2(\bF)\bigr)= \Centr_{\tau(A,A)}\bigl(\frsl_2(\bF)\bigr)$.

The Lie bracket in $\frg(A)$ then induces some operations as in \eqref{eq:g[]} and \eqref{eq:maps} on $A_0$ and $A_1$, which we determine now. We take $a,b\in A_0$ and $x,y,z\in A_1$.

\begin{itemize}
\item $[H\otimes a,F\otimes b]=-2F\otimes a\cdot b$, but $[\tau(a,e),\iota_1(b)]=\iota_1(\{a,e,b\})$. Hence,
    \[
    a\cdot b=-\frac{1}{2}\{a,e,b\}.
    \]

\item $[H\otimes a,u\otimes x]=u\otimes a\bullet x$, and $[\tau(a,e),\iota_1(x)]=\iota_1(\{a,e,x\})$, so
    \[
    a\bullet x=\{a,e,x\}.
    \]

\item $[u\otimes x,v\otimes y]=-H\otimes \langle x\vert y\rangle + d_{x,y}$, and $[\iota_1(x),\iota_2(e*y)]=\tau(x,e*y)$. Since $(x*e)*e=-\{x,e,e\}+\{e,e,x\}=x$ (see \textbf{(D1)}), equation \eqref{eq:tauexy} gives
    \[
    \begin{split}
    \tau(x,e*y)&=\tau(e*(e*x),e*y)\\
       &=\frac{1}{2}\tau((e*x)*(e*y),e)+\frac{1}{2}\bigl(\tau(x,e*y)+\tau(y,e*x)\bigr),
    \end{split}
    \]
    so we get:
    \[
    \left\{\begin{aligned}
    \langle x\vert y\rangle &=-\frac{1}{2}(e*x)*(e*y),\\
    d_{x,y}&=\frac{1}{2}\bigl(\tau(x,e*y)+\tau(y,e*x)\bigr),
    \end{aligned}\right.
    \]
    and, because of \eqref{eq:(xyz)ds},
    \[
    \begin{split}
    \ptriple{x,y,z}&=\frac{1}{4}\bigl(-\{x,e*y,z\}-\{y,e*x,z\}-\{(e*x)*(e*y),e,z\}\bigr)\\
    &=-\frac{1}{4}\bigl(\{x,e*y,z\}+\{y,e*x,z\}\\
    &\qquad\qquad\qquad +\{(e*y)*e,e*x,z\}+\{e*(e*x),e*y,z\}\bigr)\\
    &=-\frac{1}{4}\bigl(2\{x,e*y,z\}\bigr)=-\frac{1}{2}\{x,e*y,z\},
    \end{split}
    \]
    where \textbf{(D1)} and \textbf{(D4)} have been used.
\end{itemize}

This proves the following result:

\begin{corollary}\label{co:dicyclicJT}
Let $(A,\bar{\ },*,\{...\})$ be a dicyclic ternary algebra which contains an element $e\in A_0$ such that $\{e,e,a\}=-2a$ for any $a\in A_0$, and $\{e,e,x\}=x$ and $\{x,e,e\}=0$ for any $x\in A_1$. Then the pair $(A_0,A_1)$ is a $J$-ternary algebra with operations:
\[
\begin{split}
a\cdot b&=-\frac{1}{2}\{a,e,b\},\\
a\bullet x&=\{a,e,x\},\\
\langle x\vert y\rangle&=-\frac{1}{2}(e*x)*(e*y),\\
\ptriple{x,y,z}&=-\frac{1}{2}\{x,e*y,z\},
\end{split}
\]
for any $a,b\in A_0$ and $x,y,z\in A_1$. \qed
\end{corollary}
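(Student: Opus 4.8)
The plan is to obtain the corollary by combining Theorem~\ref{th:gABC1C1} with the structure theory of Section~\ref{se:BC1C1Jternary}. By Theorem~\ref{th:gABC1C1} the Lie algebra $\frg(A)$ built from $(A,\bar{\ },*,\{...\})$ in Theorem~\ref{th:conversedicyclic} is $BC_1$-graded of type $C_1$, and the proof of that theorem (through Lemmas~\ref{le:A*A0}--\ref{le:tauAACentr}) already exhibits an explicit subalgebra $\frsl_2(\bF)=\bF H\oplus\bF E\oplus\bF F$ with $H=\tau(e,e)$, $E=-\iota_2(e)$, $F=\iota_1(e)$, a decomposition of $\frg(A)$ into adjoint, two-dimensional and trivial $\frsl_2(\bF)$-modules, and, for a symplectic basis $\{u,v\}$ of a two-dimensional $V$ and $H,E,F$ as in \eqref{eq:matricesHEF}, the identifications $H\otimes a\leftrightarrow\tau(a,e)$, $E\otimes a\leftrightarrow\frac{1}{2}\iota_2(\{e,a,e\})$, $F\otimes a\leftrightarrow\iota_1(a)$, $u\otimes x\leftrightarrow\iota_1(x)$, $v\otimes x\leftrightarrow\iota_2(e*x)$, together with $\frd=\Centr_{\frg(A)}\bigl(\frsl_2(\bF)\bigr)$. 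Under these identifications $\frg(A)$ acquires the form \eqref{eq:gslVVd} with $J=A_0$ and $T=A_1$, so by Theorem~\ref{th:gBC1} the data read off from its bracket form a unital Jordan algebra, a special unital module and $\frd$-invariant maps satisfying \eqref{eq:Ds}--\eqref{eq:dxyzdzyx}, whence Theorem~\ref{th:JT} yields that $(A_0,A_1)$ is a $J$-ternary algebra with triple product \eqref{eq:(xyz)ds}. The remaining task is purely computational: to express the four products in terms of $*$ and $\{...\}$.

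First I would compare the brackets of $\frg(A)$ (Theorem~\ref{th:conversedicyclic}) with the model brackets \eqref{eq:g[]}. From $[H\otimes a,F\otimes b]=-2F\otimes(a\cdot b)$ against $[\tau(a,e),\iota_1(b)]=\iota_1(\{a,e,b\})$ one reads $a\cdot b=-\frac{1}{2}\{a,e,b\}$, and from $[H\otimes a,u\otimes x]=u\otimes(a\bullet x)$ against $[\tau(a,e),\iota_1(x)]=\iota_1(\{a,e,x\})$ one reads $a\bullet x=\{a,e,x\}$. For the remaining two products I would use $\gamma_{u,v}=-H$ (by \eqref{eq:gammaHEF}), so that $[u\otimes x,v\otimes y]=-H\otimes\langle x\vert y\rangle+d_{x,y}$ must be matched with $[\iota_1(x),\iota_2(e*y)]=\tau(x,e*y)$; writing $x=e*(e*x)$ (a consequence of $(e*x)*e=-x$, which in turn follows from \textbf{(D1)} and the hypotheses on $e$) and invoking the symmetrization identity \eqref{eq:tauexy} splits $\tau(x,e*y)$ into a $\tau(A_0,e)$-part and a part lying in $\frd=\Centr_{\frg(A)}\bigl(\frsl_2(\bF)\bigr)$, yielding $\langle x\vert y\rangle=-\frac{1}{2}(e*x)*(e*y)$ and $d_{x,y}=\frac{1}{2}\bigl(\tau(x,e*y)+\tau(y,e*x)\bigr)$.

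Finally, substituting these into \eqref{eq:(xyz)ds} gives
\[
\ptriple{x,y,z}=-\frac{1}{4}\Bigl(\{x,e*y,z\}+\{y,e*x,z\}+\{(e*x)*(e*y),e,z\}\Bigr),
\]
and the step I expect to be the main obstacle is collapsing this expression to $-\frac{1}{2}\{x,e*y,z\}$. The idea is to rewrite $\{(e*x)*(e*y),e,z\}$ by means of \textbf{(D4)} as $\{(e*y)*e,e*x,z\}+\{e*(e*x),e*y,z\}$ and then to apply the relations $(e*y)*e=-y$ and $e*(e*x)=x$ (both coming from \textbf{(D1)} and anticommutativity): the first turns $\{(e*y)*e,e*x,z\}$ into $-\{y,e*x,z\}$, cancelling the middle term, and the second turns $\{e*(e*x),e*y,z\}$ into a second copy of $\{x,e*y,z\}$, leaving $-\frac{1}{4}\cdot 2\{x,e*y,z\}$. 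Once this identity is verified the corollary follows, since Theorem~\ref{th:JT} already guarantees that the quadruple $(\cdot,\bullet,\langle\cdot\vert\cdot\rangle,\ptriple{\cdot,\cdot,\cdot})$ produced above satisfies all the $J$-ternary axioms.
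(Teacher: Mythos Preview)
Your proposal is correct and follows essentially the same route as the paper: invoke Theorem~\ref{th:gABC1C1}, use the explicit $\frsl_2(\bF)$-module identifications established in Lemmas~\ref{le:adjointandtwo}--\ref{le:tauAACentr}, read off $a\cdot b$, $a\bullet x$, $\langle x\vert y\rangle$, $d_{x,y}$ by comparing the bracket of $\frg(A)$ with \eqref{eq:g[]}, and then collapse the expression for $\ptriple{x,y,z}$ via \textbf{(D4)} and $(e*y)*e=-y$, $e*(e*x)=x$. This is precisely the computation the paper carries out in the paragraphs immediately preceding the corollary; your use of Theorems~\ref{th:gBC1} and~\ref{th:JT} to certify the $J$-ternary axioms makes explicit what the paper leaves implicit.
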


\medskip

\begin{remark}\label{re:dicyclicJT}
Had we started with the dicyclic ternary algebra $A=J\oplus T$ attached to a $J$-ternary algebra, with $e=1\in J$ as in Proposition \ref{pr:JTdicyclic}, then the element $e$ would satisfy, besides the conditions $\{e,e,a\}=-2a$ for any $a\in A_0=J$ and $\{e,e,x\}=x$, $\{x,e,e\}=0$ for any $x\in A_1=T$, the extra condition $e*x=-1\bullet x=-x$ for any $x\in T$ in \eqref{eq:11ax}, and hence we would have obtained
\[
\begin{split}
a\cdot b&=-\frac{1}{2}\{a,e,b\},\\
a\bullet x&=\{a,e,x\}=-(a*x)*e=-a*x\quad\text{(using \textbf{(D1)}),}\\
\langle x\vert y\rangle&=-\frac{1}{2}x*y,\\
\ptriple{x,y,z}&=\frac{1}{2}\{x,y,z\},
\end{split}
\]
for $a,b\in J$ and $x,y,z\in T$, which recover the original operations in $J$ and $T$ (see \eqref{eq:JTdicyclic}).
\end{remark}

\medskip

\section{$(\epsilon,\delta)$ Freudenthal-Kantor triple systems and dicyclic algebras}\label{se:edFKTSdicyclic}

The results in Section \ref{se:dicyclicJternary} show that for a $J$-ternary algebra $(J,T)$, the Lie algebra $\frg(J,T)$ is a Lie algebra with dicyclic symmetry, because the dicyclic group is contained in the symplectic group $Sp(V)$ which acts by automorphisms on $\frg(J,T)$. In particular (see Section \ref{se:JternarySpecialFKTS}) given a special $(1,1)$ Freudenthal-Kantor triple system $(U,xyz)$, the pair $(J,U)$ is endowed with a structure of  $J$-ternary algebra (Theorem \ref{th:JT11}), where $J$ is the special Jordan algebra $\bF 1+K(U,U)$, and hence we get that the corresponding Lie algebra $\frg(J,U)$ is a Lie algebra with dicyclic symmetry (that is, there is an action of the dicyclic group $\Dic_3$ by automorphisms on $\frg(J,U)$).

\smallskip

In this section, given an arbitrary $(\epsilon,\delta)$ Freudenthal-Kantor triple system $(U,xyz)$, a $5$-graded Lie algebra, for $\delta=1$, or Lie superalgebra, for $\delta=-1$, with dicyclic symmetry will be defined. For a special $(1,1)$ Freudenthal-Kantor triple system this algebra is quite close to our previous $\frg(J,U)$.

The construction of $\frg(U)$ is based on the pioneering work of Yamaguti and Ono \cite{YO84}.

\smallskip

Thus, let $(U,xyz)$ be a $(\epsilon,\delta)$ Freudenthal-Kantor triple system, then the space of $2\times 1$ matrices over $U$:
\[
\cT=\left\{\begin{pmatrix}x\\ y\end{pmatrix}: x,y\in U\right\}
\]
becomes a Lie triple system for $\delta=1$ and an anti-Lie triple system for $\delta=-1$ (see \cite[Section 3]{YO84}) by means of the triple product:
\begin{equation}\label{eq:Lietriple}
\begin{split}
&\left[\begin{pmatrix}a_1\\ b_1\end{pmatrix}\begin{pmatrix} a_2\\ b_2\end{pmatrix}\begin{pmatrix} a_3\\ b_3\end{pmatrix}\right]\\
&\qquad =
\begin{pmatrix} L(a_1,b_2)-\delta L(a_2,b_1)&\delta K(a_1,a_2)\\
-\epsilon K(b_1,b_2)&\epsilon L(b_2,a_1)-\epsilon\delta L(b_1,a_2)\end{pmatrix}\begin{pmatrix} a_3\\ b_3\end{pmatrix}
\end{split}
\end{equation}
and, therefore, the vector space
\begin{equation}\label{eq:L}
\cL=\espan{\begin{pmatrix} L(a,b)&K(c,d)\\ K(e,f)&\epsilon L(b,a)\end{pmatrix} : a,b,c,d,e,f\in U}
\end{equation}
is a Lie subalgebra of $\Mat_2\bigl(\End_\bF(U)\bigr)^-$. (Given an associative algebra $A$, $A^-$ denotes the Lie algebra defined on $A$ with product given by the usual Lie bracket $[x,y]=xy-yx$.)

Hence we get either a $\bZ_2$-graded Lie algebra (for $\delta=1$) or a superalgebra (for $\delta=-1$)
\begin{equation}\label{eq:gU}
\frg(U)=\cL\oplus\cT
\end{equation}
where $\cL$ is the even part and $\cT$ the odd part. The bracket in $\frg(U)$ is given by:
\begin{itemize}
\item the given bracket in $\cL$ as a subalgebra of $\Mat_2\bigl(\End_\bF(U)\bigr)^-$,
\item $[M,X]=M(X)$ for any $M\in \cL$ and $X\in \cT$ (note that $\Mat_2\bigl(\End_\bF(U)\bigr)\simeq \End_\bF(\cT)$),
\item for any $a_1,a_2,b_1,b_2\in U$:
\[
\left[\begin{pmatrix}a_1\\ b_1\end{pmatrix},\begin{pmatrix}a_2\\ b_2\end{pmatrix}\right]=\begin{pmatrix} L(a_1,b_2)-\delta L(a_2,b_1)&\delta K(a_1,a_2)\\
-\epsilon K(b_1,b_2)&\epsilon L(b_2,a_1)-\epsilon\delta L(b_1,a_2)\end{pmatrix}.
\]
\end{itemize}

\smallskip

This (super)algebra $\frg(U)$ is consistently $\bZ$-graded as follows:
\[
\begin{split}
\frg(U)_{(0)}&=\espan{\begin{pmatrix} L(a,b)&0\\ 0&\epsilon L(b,a)\end{pmatrix} : a,b\in U},\\
\frg(U)_{(1)}&=\begin{pmatrix}U\\ 0\end{pmatrix},\\
\frg(U)_{(-1)}&=\begin{pmatrix} 0\\ U\end{pmatrix},\\
\frg(U)_{(2)}&=\espan{\begin{pmatrix} 0&K(a,b)\\ 0&0\end{pmatrix} : a,b\in U},\\
\frg(U)_{(-2)}&=\espan{\begin{pmatrix} 0&0\\ K(a,b)&0\end{pmatrix} : a,b\in U},
\end{split}
\]
so that $\frg(U)$ is $5$-graded and
\[
\cL=\frg(U)_{(-2)}\oplus\frg(U)_{(0)}\oplus\frg(U)_{(2)},\qquad \cT=\frg(U)_{(-1)}\oplus\frg(U)_{(1)}.
\]

\smallskip

On the other hand, if $U$ is a special $(\epsilon,\epsilon)$ Freudenthal-Kantor triple system, Corollaries \ref{co:11Lie} and \ref{co:-1-1Lie} give another construction of a Lie (super)algebra $\frg(J,U)$ attached to $U$ (Lie algebra for $\epsilon=1$ and superalgebra for $\epsilon=-1$), where $J=\bF 1+K(U,U)$:
\[
\frg(J,U)=\bigl(\frsl(V)\otimes J\bigr)\oplus\bigl(V\otimes U\bigr)\oplus\frd,
\]
with $\frd=S(U,U)$ (the linear span of the operators $S(x,y)$ defined in \eqref{eq:STs}).

\begin{proposition}
Let $(U,xyz)$ be a special $(\epsilon,\epsilon)$ Freudenthal-Kantor triple system, and let $\{u,v\}$ be a fixed symplectic basis of the two-dimensional vector space $V$ (so that $(u\vert v)=1$ for a fixed nonzero skew symmetric bilinear map $(.\vert.)$). Then the linear map $\frg(U)\rightarrow \frg(J,U)$ given by:
\[
\begin{split}
\begin{pmatrix}a\\ b\end{pmatrix}\in\cT&\mapsto u\otimes a+v\otimes b\in V\otimes U,\\
\begin{pmatrix} L(a,b)&0\\ 0&\epsilon L(b,a)\end{pmatrix}&\mapsto -\epsilon\gamma_{u,v}\otimes K(a,b)\, -\, S(a,b)\ \in\bigl(\frsl(V)\otimes J\bigr)\oplus\frd,\\
\begin{pmatrix} 0&K(a,b)\\ 0&0\end{pmatrix}&\mapsto -\gamma_{u,u}\otimes K(a,b)\,\in\frsl(V)\otimes J,\\
\begin{pmatrix} 0&0\\ K(a,b)&0\end{pmatrix}&\mapsto \gamma_{v,v}\otimes K(a,b)\,\in\frsl(V)\otimes J,
\end{split}
\]
for any $a,b\in U$, is a one-to-one Lie (super)algebra homomorphism.
\end{proposition}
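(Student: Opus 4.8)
The plan is to establish, in turn, that $\Phi$ is a well-defined graded linear map, that it is injective, and that it preserves brackets; the last point is the substantial one. Both $\frg(U)$ and $\frg(J,U)$ carry a $5$-grading: on $\frg(U)$ it is the displayed decomposition into the $\frg(U)_{(k)}$, and on $\frg(J,U)$ it is the weight decomposition under the copy $\frsl(V)=\frsl(V)\otimes 1$ of $\frsl_2(\bF)$ (equivalently, the eigenspaces of $\ad(H\otimes 1)$), under which $F\otimes J$, $(H\otimes J)\oplus\frd$ and $E\otimes J$ have weights $-2,0,2$ while $v\otimes U$, $u\otimes U$ have weights $-1,1$. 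Using $\gamma_{u,v}=-H$, $\gamma_{u,u}=2E$ and $\gamma_{v,v}=-2F$ one reads off at once that $\Phi$ sends $\frg(U)_{(k)}$ into $\frg(J,U)_{(k)}$ for every $k$. For well-definedness: an element of $\frg(U)_{(\pm 2)}$ is the single operator $K(a,b)\in K(U,U)\subseteq J$ it displays, while an element of $\frg(U)_{(0)}$ is determined by the pair $\bigl(M,M^{\dagger}\bigr)=\bigl(\sum_i L(a_i,b_i),\sum_i L(b_i,a_i)\bigr)$, and by the speciality relation \eqref{eq:special} with $\delta=\epsilon$ the expressions $\sum_i K(a_i,b_i)=M^{\dagger}-\epsilon M$ and $\sum_i S(a_i,b_i)=M+\epsilon M^{\dagger}$ depend only on that pair; since $K(U,U)\subseteq J$ and $S(U,U)=\frd$, the prescribed images lie unambiguously in $(\frsl(V)\otimes J)\oplus\frd$.

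Injectivity can then be checked component by component, since $\Phi$ is graded. On $\frg(U)_{(\pm 1)}$ the map is essentially the identity of $U$; on $\frg(U)_{(\pm 2)}$ it carries $K(a,b)$ to a nonzero scalar multiple of $E\otimes K(a,b)$ (resp. $F\otimes K(a,b)$), which vanishes only if $K(a,b)=0$; and on $\frg(U)_{(0)}$, if $\epsilon H\otimes(M^{\dagger}-\epsilon M)-(M+\epsilon M^{\dagger})=0$ in the direct sum $(\frsl(V)\otimes J)\oplus\frd$, then, the two summands being independent and $H\neq 0$, both $M^{\dagger}-\epsilon M=0$ and $M+\epsilon M^{\dagger}=0$, whence $2M=0$ and $M=M^{\dagger}=0$.

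For the bracket-preservation, by bilinearity and the grading it suffices to check $\Phi([X,Y])=[\Phi X,\Phi Y]$ (the super-bracket when $\epsilon=-1$) for homogeneous $X\in\frg(U)_{(i)}$, $Y\in\frg(U)_{(j)}$; when $|i+j|>2$ both brackets lie in a zero graded component. The brackets among $\cL$-elements can be avoided altogether: $\cL$ acts faithfully on $\cT$ (it is by construction a subalgebra of $\End_{\bF}(\cT)$), and $(\frsl(V)\otimes J)\oplus\frd$ acts faithfully on $V\otimes U$ (an element $\sum f_i\otimes a_i+\varphi$ annihilating $V\otimes U$ must vanish, as one sees from the basis $\{u,v\}$ and the inclusions $J,\frd\subseteq\End_{\bF}(U)$); so once $\Phi$ is known to intertwine the $\cL$-action on $\cT$ with the $\Phi(\cL)$-action on $V\otimes U$ — i.e. once the brackets of type $[\cL,\cT]$ are settled — the identity $\Phi([M,M'])=[\Phi M,\Phi M']$ for $M,M'\in\cL$ follows from this faithfulness together with $\ad$ being a representation. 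It then remains to treat the brackets $[\cL,\cT]$ and $[\cT,\cT]$ by substituting the explicit formulas (from \eqref{eq:Lietriple} and the matrix commutator on the $\frg(U)$ side, from Corollary \ref{co:11Lie} resp. \ref{co:-1-1Lie} on the $\frg(J,U)$ side) and reducing each to an identity in $U$: the $[\cL,\cT]$ ones reduce to the module relation $a\bullet x=a(x)$ and to $S(p,q)$ being a derivation of the triple product (Lemma \ref{le:FK1}), and the $[\cT,\cT]$ ones to the identifications $\langle x\vert y\rangle=-K(x,y)$, $d_{x,y}=-S(x,y)$ for $\epsilon=1$ and $\langle x\vert y\rangle=K(x,y)$, $d_{x,y}=-S(x,y)$ for $\epsilon=-1$ (Theorems \ref{th:JT11}, \ref{th:-JT-1-1} and Corollaries \ref{co:11Lie}, \ref{co:-1-1Lie}), together with the definition \eqref{eq:LK} of $K$.

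The main obstacle I anticipate is the bracket of the two outer pieces $\frg(U)_{(2)}$ and $\frg(U)_{(-2)}$, and more generally of a $\pm 2$-piece with a $\mp 1$-piece: on the $\frg(U)$ side these produce genuine operator products $K(a,b)K(c,d)$, whereas on the $\frg(J,U)$ side the bracket of $E\otimes K(a,b)$ with $F\otimes K(c,d)$ only involves the Jordan product $\tfrac12\bigl(K(a,b)K(c,d)+K(c,d)K(a,b)\bigr)$ and the commutator $[K(a,b),K(c,d)]\in\frd$, by the $[f\otimes a,g\otimes b]$-rule of Corollary \ref{co:11Lie}. Matching them is exactly where \eqref{eq:KKs} is needed (to keep the symmetrized product inside $K(U,U)\subseteq J$) and where \eqref{eq:FK2}, together with \eqref{eq:[TT]} and the fact that $K(x,y)=T(x,y)$ for a special $(\epsilon,\epsilon)$ system, is needed (to see that $[K(a,b),K(c,d)]$ lies in $S(U,U)=\frd$ and equals the required expression); all of this must be carried out uniformly in $\epsilon=\pm 1$, tracking the conventions $\delta=\epsilon$, $K(y,x)=-\epsilon K(x,y)$ and the skew/symmetric alternative for $\langle .\vert .\rangle$ and $d_{.,.}$ in the algebra and superalgebra cases in parallel.
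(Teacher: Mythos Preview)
Your proposal is correct and aligns with the paper's own proof, which consists of the single sentence ``This is done by straightforward computations.'' Your plan is precisely a careful organization of those computations; the faithfulness argument you use to deduce the $[\cL,\cL]$-identities from the $[\cL,\cT]$-ones is a tidy shortcut (and makes your final paragraph about the $(2,-2)$-bracket somewhat redundant, since that case is already absorbed by the faithfulness step).
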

\begin{proof} This is done by straightforward computations.
\end{proof}

Under this homomorphism $\frg(U)_{(1)}$ (respectively $\frg(U)_{(-1)}$)  is identified to $u\otimes U$ (respectively $v\otimes U$).

\smallskip

In particular, for a special $(\epsilon,\epsilon)$ Freudenthal-Kantor triple system, the Lie (super)algebra $\frg(U)$ inherits the dicyclic symmetry from $\frg(J,U)$ considered in Section \ref{se:JternarySpecialFKTS} (assuming the primitive cubic roots $\omega$ and $\omega^2$ of $1$ are contained in the ground field $\bF$).

Our next purpose is to check that for any arbitrary $(\epsilon,\delta)$ Freudenthal-Kantor triple system $(U,xyz)$ the Lie (super)algebra $\frg(U)$ presents dicyclic symmetry (assuming $\omega\in\bF$), which restricts to the previous one for the special $(\epsilon,\epsilon)$ Freudenthal-Kantor triple systems.

First, the fact that $\frg(U)$ is $\bZ$-graded gives a group homomorphism:
\[
\begin{split}
\bF^\times&\longrightarrow \Aut\bigl(\frg(U)\bigr)\\
\mu&\mapsto \phi_\mu:X\mapsto \mu^iX,\ \text{for $X\in\frg(U)_{(i)}$, $-2\leq i\leq 2$.}
\end{split}
\]
Also, the map
\[
\begin{split}
\theta:\cT&\longrightarrow \cT\\
\begin{pmatrix}a\\ b\end{pmatrix}&\mapsto \,\begin{pmatrix}-\epsilon b\\ \delta a\end{pmatrix}\ \text{for $a,b\in U$,}
\end{split}
\]
is an automorphism of the (anti)Lie triple system $\cT$, and hence extends naturally to an automorphism, also denoted by $\theta$, of $\frg(U)=\cL\oplus\cT$, with $\theta(M)=\theta M\theta^{-1}$ for any $M\in \cL\subseteq\End_\bF(\cT)$.

Note that $\theta^2=1$ for $\epsilon=-\delta$, while $\theta^4=1\ne \theta^2$ for $\epsilon=\delta$. For $\epsilon=\delta$, $\theta^2$ is $-1$ on $\cT$ and $1$ on $\cL$, so it coincides with the $\bZ_2$-grading automorphism of $\frg(U)$. A few computations give precise formulas for the action of $\theta$:
\[
\begin{split}
\theta\begin{pmatrix} L(a,b)&0\\ 0&\epsilon L(b,a)\end{pmatrix}
  &=\left[\theta\begin{pmatrix}a\\ 0\end{pmatrix},\theta\begin{pmatrix} 0\\ b\end{pmatrix}\right]\\
  &=-\epsilon\delta\left[\begin{pmatrix}0\\ a\end{pmatrix},\begin{pmatrix}b\\ 0\end{pmatrix}\right]=\begin{pmatrix} \epsilon L(b,a)&0\\ 0& L(a,b)\end{pmatrix},\\[6pt]
\theta\begin{pmatrix} 0&K(a,b)\\ 0&0\end{pmatrix}
  &=\delta\left[\theta\begin{pmatrix}a\\ 0\end{pmatrix},\theta\begin{pmatrix} b\\ 0\end{pmatrix}\right]\\
  &=\delta\left[\begin{pmatrix}0\\ a\end{pmatrix},\begin{pmatrix}0\\ b\end{pmatrix}\right]=-\epsilon\delta\begin{pmatrix} 0&0\\ K(a,b)&0\end{pmatrix},\\[6pt]
\theta\begin{pmatrix} 0&0\\ K(a,b)&0\end{pmatrix}
  &=-\epsilon\left[\theta\begin{pmatrix}0\\ a\end{pmatrix},\theta\begin{pmatrix} 0\\ b\end{pmatrix}\right]\\
  &=-\epsilon\left[\begin{pmatrix}a\\ 0\end{pmatrix},\begin{pmatrix}b\\ 0\end{pmatrix}\right]
  =-\epsilon\delta\begin{pmatrix} 0&K(a,b)\\ 0&0\end{pmatrix}.
\end{split}
\]

Assuming $\omega\in\bF$, the automorphisms $\phi=\phi_\omega$ and $\theta$ satisfy \eqref{eq:theta4phi3}. The action of $\phi$ induces a $\bZ_3$-grading:
$\frg(U)=\frg(U)_1\oplus\frg(U)_\omega\oplus\frg(U)_{\omega^2}$, with $\frg(U)_1=\frg(U)_{(0)}$, $\frg(U)_\omega=\frg(U)_{(-2)}\oplus\frg(U)_{(1)}\simeq K(U,U)\oplus U$, and $\frg(U)_{\omega^2}=\frg(U)_{(2)}\oplus\frg(U)_{(-1)}\simeq K(U,U)\oplus U$

\begin{remark}\label{re:gUS3}
Actually, for $\delta=-\epsilon$ we get $\theta^2=1$, and hence $\phi$ and $\theta$ generate a subgroup of automorphisms of $\frg(U)$ isomorphic to the symmetric group $S_3$. In particular, for $\delta=-\epsilon=1$, $\frg(U)$ is a Lie algebra with $S_3$-symmetry, and this defines a structure of \emph{generalized Malcev algebra} (see \cite{EOS3S4}) on $\frg_\omega\simeq K(U,U)\oplus U$. This extends \cite[Proposition 4.1]{EOS3S4}.
\end{remark}

\smallskip

For $\epsilon=\delta=1$, $\frg(U)$ becomes a Lie algebra with dicyclic symmetry, and this shows that $A=\frg(U)_\omega\simeq K(U,U)\oplus U$ becomes a dicyclic ternary algebra (see Section \ref{se:dicyclic}), with $A_0=\frg(U)_{(-2)}\simeq K(U,U)$ (fixed by $\theta^2$) and $A_1=\frg(U)_{(1)}\simeq U$. Identify $U$ with $A_1=\frg(U)_{(1)}$ and $K(U,U)$ with $A_0=\frg(U)_{(-2)}$ in the natural way:
\[
u\leftrightarrow \begin{pmatrix} u\\ 0\end{pmatrix},\qquad K(a,b)\leftrightarrow \begin{pmatrix}0&0\\ K(a,b)&0\end{pmatrix}.
\]

After these identifications, we can compute the binary and ternary products in \eqref{eq:barsquaretriple}.

First note that $A_0*A_0=0$ as $[\frg(U)_{(-2)},\frg(U)_{(-2)}]=0$. Also, for $a,b,x,y\in U$ we have:
\begin{gather*}
\theta^{-1}\left(\left[\begin{pmatrix}0&0\\ K(a,b)&0\end{pmatrix},\begin{pmatrix} 0\\ x\end{pmatrix}\right]\right)
 =\theta^{-1}\begin{pmatrix} 0\\ K(a,b)x\end{pmatrix}
=\begin{pmatrix} K(a,b)x\\ 0\end{pmatrix},\\[6pt]
\theta^{-1}\left(\left[\begin{pmatrix} x\\ 0\end{pmatrix},\begin{pmatrix} y\\ 0\end{pmatrix}\right]\right)
 =\theta^{-1}\begin{pmatrix} 0&K(x,y)\\ 0&0\end{pmatrix}
=-\begin{pmatrix} 0&0\\ K(x,y)&0\end{pmatrix}.
\end{gather*}

Therefore, the binary product in $A=K(U,U)\oplus U$ is given by:
\begin{equation}\label{eq:*productKUUU}
\begin{split}
M_1*M_2&=0,\\
M*x&=Mx,\\
x_1*x_2&=-K(x_1,x_2),
\end{split}
\end{equation}
for any $M,M_1,M_2\in K(U,U)$ and $x,x_1,x_2\in U$.

\smallskip

On the other hand we have:
\[
[\frg(U)_{(1)},\theta\bigl(\frg(U)_{(-2)}\bigr)]
=[\frg(U)_{(1)},\frg(U)_{(2)}]=0
=[\frg(U)_{(-2)},\theta\bigl(\frg(U)_{(1)}\bigr)],
\]
so we get the following instances of the triple product:
\[
\{A_0,A_1,A\}=0=\{A_1,A_0,A\}.
\]

\smallskip

Now, for any $a_1,a_2,b_1,b_2\in U$, we get
\[
\begin{split}
&\left[\begin{pmatrix} 0&0\\ K(a_1,b_1)&0\end{pmatrix},\theta\begin{pmatrix} 0&0\\ K(a_2,b_2)&0\end{pmatrix}\right]\\
 &\qquad\qquad\qquad =-\left[\begin{pmatrix} 0&0\\ K(a_1,b_1)&0\end{pmatrix},\begin{pmatrix} 0& K(a_2,b_2)\\ 0&0\end{pmatrix}\right]\\
 &\qquad\qquad\qquad =\begin{pmatrix} K(a_2,b_2)K(a_1,b_1) &0\\ 0&-K(a_1,b_1)K(a_2,b_2)\end{pmatrix}.
\end{split}
\]
The fact that this last matrix belongs to $\frg(U)_{(0)}$ is a consequence of the next result (see \cite[(2.9) and (2.10)]{YO84}).

\begin{lemma}\label{le:identitiesedFKTS}
Let $(U,xyz)$ be an $(\epsilon,\delta)$ Freudenthal-Kantor triple system, then for any $a,b,c,d\in U$ the following identities hold:
\begin{subequations}
\begin{align}
&\epsilon K(a,b)K(c,d)+L\bigl(K(a,b)c,d\bigr)-\delta L\bigl(K(a,b)d,c\bigr)=0,\label{eq:KKLK.}\\
&K(c,d)K(a,b)+\delta L\bigl(c,K(a,b)d\bigr)-L\bigl(d,K(a,b)c\bigr)=0.\label{eq:KKL.K}
\end{align}
\end{subequations}
\end{lemma}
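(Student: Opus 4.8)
The plan is to derive both identities from the defining relations \eqref{eq:FK1}--\eqref{eq:FK2} together with the definitions \eqref{eq:LK}, as was originally done in \cite{YO84}. I would begin by recording two elementary consequences of \eqref{eq:LK}: the ``skew-symmetry'' $K(x,y)=-\delta K(y,x)$, and the rewriting $K(x,y)z=xzy-\delta yzx=L(x,z)y-\delta L(y,z)x$. The latter is the key point, because it lets one replace an inner $K(x,y)z$ that sits in a slot of an outer operator $L$ by a combination of expressions of the form $L\bigl(L(u,v)w,\cdot\bigr)$ or $L\bigl(\cdot,L(u,v)w\bigr)$, to which \eqref{eq:FK1} can then be applied.

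Using only \eqref{eq:FK1} and \eqref{eq:LK}, I would first reduce \eqref{eq:KKLK.} to the operator identity
\[
K(a,b)K(c,d)=L\bigl(b,K(c,d)a\bigr)-\delta L\bigl(a,K(c,d)b\bigr)\qquad(\ast).
\]
Indeed, writing $K(a,b)c=L(a,c)b-\delta L(b,c)a$ and likewise for $K(a,b)d$, each of $L\bigl(K(a,b)c,d\bigr)$ and $L\bigl(K(a,b)d,c\bigr)$ becomes a sum of terms $L\bigl(L(u,v)w,y\bigr)$, and \eqref{eq:FK1} turns each of these into $[L(u,v),L(w,y)]-\epsilon L\bigl(w,L(v,u)y\bigr)$. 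In the combination $L\bigl(K(a,b)c,d\bigr)-\delta L\bigl(K(a,b)d,c\bigr)$ the four commutator terms cancel in pairs (using $\delta^{2}=1$), and the four remaining terms regroup---after factoring $L$ out and recognizing $K(c,d)a$ and $K(d,c)b=-\delta K(c,d)b$---into $\epsilon\bigl(\delta L(a,K(c,d)b)-L(b,K(c,d)a)\bigr)$; substituting this into \eqref{eq:KKLK.} yields $(\ast)$. The very same computation, carried out with the composite argument occupying the \emph{second} slot of the outer $L$, reduces \eqref{eq:KKL.K} to $(\ast)$ with $(a,b,c,d)$ replaced by $(c,d,a,b)$. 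Hence it suffices to prove $(\ast)$ for all $a,b,c,d\in U$.

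To prove $(\ast)$ I would apply both sides to an arbitrary $z\in U$. The left-hand side is $K(a,b)\bigl(K(c,d)z\bigr)=a\,\bigl(K(c,d)z\bigr)\,b-\delta\,b\,\bigl(K(c,d)z\bigr)\,a$, and expanding every inner $K(c,d)$ by \eqref{eq:LK} turns both sides into doubly nested triple products in which the inner product occupies the middle slot, e.g.\ terms $a(czd)b$. Such middle-slot nestings are exactly what \eqref{eq:FK2} controls: applied to a test element, \eqref{eq:FK2}---which reads $K\bigl(K(u,v)x,y\bigr)=L(y,x)K(u,v)-\epsilon K(u,v)L(x,y)$---becomes an identity that expresses the combination $u(xyw)v-\delta\,v(xyw)u$ of middle-nested products as a combination of first-slot and third-slot nested products. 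Feeding these substitutions in, and tidying up the residual first/third-slot nestings by another appeal to \eqref{eq:FK1}, one verifies that the two sides of $(\ast)$ coincide.

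The reduction to $(\ast)$ is the conceptually transparent part---the commutators cancel automatically, the leftovers reassemble into $K$'s effortlessly, and both \eqref{eq:KKLK.} and \eqref{eq:KKL.K} get absorbed into one statement. The only genuine obstacle is the verification of $(\ast)$: there are many doubly nested triple products to keep track of, and the elimination of middle-slot nesting via \eqref{eq:FK2} (followed by cleanup via \eqref{eq:FK1}) must be organized carefully so that everything cancels. This is precisely the bookkeeping already carried out in \cite[\S2]{YO84}.
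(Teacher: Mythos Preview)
Your reduction of \eqref{eq:KKLK.} to $(\ast)$ via \eqref{eq:FK1} is correct (the commutators cancel pairwise and the remaining four terms reassemble into $K(c,d)a$ and $K(c,d)b$ exactly as you say). Note, however, that \eqref{eq:KKL.K} is \emph{literally} $(\ast)$ with $(a,b)$ and $(c,d)$ swapped---no second-slot expansion is needed there---so your observation already shows that \eqref{eq:KKLK.} and \eqref{eq:KKL.K} are equivalent modulo \eqref{eq:FK1}.

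Where your route diverges from the paper's is in the proof of the common identity. The paper proves \eqref{eq:KKLK.} directly from \eqref{eq:FK2} in a couple of lines: write \eqref{eq:FK2} as $K\bigl(K(u,v)x,y\bigr)z=L(y,x)K(u,v)z-\epsilon K(u,v)L(x,y)z$, take the same with $x$ and $z$ interchanged, form the combination ``first $-\ \delta\cdot$second'', and use the definition $K(p,y)z+\delta L(y,z)p=L(p,z)y$ on each side. Everything telescopes to $\epsilon K(u,v)K(x,z)+L\bigl(K(u,v)x,z\bigr)-\delta L\bigl(K(u,v)z,x\bigr)=0$. Then \eqref{eq:KKL.K} follows from \eqref{eq:KKLK.} by exactly your \eqref{eq:FK1}-manipulation, run in the other direction.

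Your plan instead attacks $(\ast)$ (that is, \eqref{eq:KKL.K}) head-on by expanding both sides at a test element and eliminating all middle-slot nestings with \eqref{eq:FK2} and \eqref{eq:FK1}. This works, but it is the long way around: having already established \eqref{eq:KKLK.}~$\Longleftrightarrow(\ast)$, it is far simpler to prove \eqref{eq:KKLK.} by the short \eqref{eq:FK2}-argument above and let $(\ast)$ come for free. The ``genuine obstacle'' you anticipate disappears if you prove the equivalent identity \eqref{eq:KKLK.} instead of $(\ast)$.
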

\begin{proof}
Equation \eqref{eq:FK2} gives
\[
\begin{split}
K\bigl(K(u,v)x&,y\bigr)z-\delta K\bigl(K(u,v)z,y\bigr)x\\
 &=L(y,x)K(u,v)z-\delta L(y,z)K(u,v)x-\epsilon K(u,v)K(x,z)y,
\end{split}
\]
But \eqref{eq:LK} gives $K\bigl(K(u,v)x,y\bigr)z+\delta L(y,z)K(u,v)x=L\bigl(K(u,v)x,z)y$ and a similar relation holds interchanging $x$ and $z$, whence \eqref{eq:KKLK.}.

On the other hand, equation \eqref{eq:FK1} gives:
\[
\begin{split}
[L(u,v)&,L(x,y)]-\delta [L(u,y),L(x,v)]\\
 &=L\bigl(L(u,v)x,y\bigr)-\delta L\bigl(L(u,y)x,v\bigr)+\epsilon L\bigl(x,K(v,y)u\bigr).
\end{split}
\]
Interchange $u$ and $x$ to get
\[
\begin{split}
[L(x,v)&,L(u,y)]-\delta [L(x,y),L(u,v)]\\
 &=L\bigl(L(x,v)u,y\bigr)-\delta L\bigl(L(x,y)u,v\bigr)+\epsilon L\bigl(u,K(v,y)x\bigr),
\end{split}
\]
and this gives
\[
\begin{split}
\delta L\bigl(L(u,v)x,y\bigr)&-L\bigl(L(u,y)x,v\bigr)+\epsilon\delta L\bigl(x,K(v,y)u\bigr)\\
 &=L\bigl(L(x,v)u,y\bigr)-\delta L\bigl(L(x,y)u,v\bigr)+\epsilon L\bigl(u,K(v,y)x\bigr),
\end{split}
\]
or
\[
\delta L\bigl(x,K(v,y)u\bigr)-L\bigl(u,K(v,y)x\bigr)=\epsilon L\bigl(K(x,u)v,y\bigr)+\epsilon L\bigl(K(u,x)y,v\bigr),
\]
whose right hand side equals $-K(x,u)K(v,y)$, due to \eqref{eq:KKLK.},  thus getting \eqref{eq:KKL.K}.
\end{proof}

\smallskip

Coming back to the case $\epsilon=\delta=1$, for $a_1,a_2,a_3,b_1,b_2,b_3,x\in U$ we have
\[
\begin{split}
&\left[\left[\begin{pmatrix} 0&0\\ K(a_1,b_1)&0\end{pmatrix},\theta\begin{pmatrix} 0&0\\ K(a_2,b_2)&0\end{pmatrix}\right],\begin{pmatrix} 0&0\\ K(a_3,b_3)&0\end{pmatrix}\right]\\[4pt]
 &\quad=\begin{pmatrix} 0&\quad 0\quad\\
 -K(a_1,b_1)K(a_2,b_2)K(a_3,b_3)-K(a_3,b_3)K(a_2,b_2)K(a_1,b_1) &0\end{pmatrix},\\[10pt]
&\left[\left[\begin{pmatrix} 0&0\\ K(a_1,b_1)&0\end{pmatrix},\theta\begin{pmatrix} 0&0\\ K(a_2,b_2)&0\end{pmatrix}\right],\begin{pmatrix} x\\ 0\end{pmatrix}\right]
 =\begin{pmatrix} K(a_2,b_2)K(a_1,b_1)x\\ 0\end{pmatrix}.
\end{split}
\]
Therefore, we get the following instances of the triple product on $A=K(U,U)\oplus U$:
\[
\begin{split}
\{ M_1,M_2,M_3\}&=-\bigl(M_1M_2M_3+M_3M_2M_1\bigr),\\
\{M_1,M_2,x\}&=M_2M_1x,
\end{split}
\]
for any $M_1,M_2,M_3\in K(U,U)$ and $x\in U$. (Note that the triple product $\{M_1,M_2,M_3\}$ is, up to the sign, the usual Jordan triple product in the associative algebra $\End_\bF(U)$.)

Finally, for any $x_1,x_2\in U$:
\[
\left[\begin{pmatrix} x_1\\ 0\end{pmatrix},\theta\begin{pmatrix} x_2\\ 0\end{pmatrix}\right]=\left[\begin{pmatrix} x_1\\ 0\end{pmatrix},\begin{pmatrix} 0\\ x_2\end{pmatrix}\right] =
\begin{pmatrix} L(x_1,x_2)&0\\ 0&\epsilon L(x_2,x_1)\end{pmatrix},
\]
so that we obtain, for any $x_1,x_2,x_3,a,b\in U$:
\[
\begin{split}
&\left[\left[\begin{pmatrix} x_1\\ 0\end{pmatrix},\theta\begin{pmatrix} x_2\\ 0\end{pmatrix}\right],\begin{pmatrix} 0&0\\ K(a,b)&0\end{pmatrix}\right]\\
 &\qquad\qquad\qquad\qquad =\begin{pmatrix} 0&0\\
  L(x_2,x_1)K(a,b)-K(a,b)L(x_1,x_2) &0\end{pmatrix}\\
  &\qquad\qquad\qquad\qquad =\begin{pmatrix} 0&0\\ K\bigl( K(a,b)x_1,x_2\bigr) &0\end{pmatrix} \quad\text{(see \eqref{eq:FK2})}\\[6pt]
&\left[\left[\begin{pmatrix} x_1\\ 0\end{pmatrix},\theta\begin{pmatrix} x_2\\ 0\end{pmatrix}\right],\begin{pmatrix} x_3\\ 0\end{pmatrix}\right]
 =\begin{pmatrix} L(x_1,x_2)x_3\\ 0\end{pmatrix}=\begin{pmatrix} x_1x_2x_3\\ 0\end{pmatrix}.
\end{split}
\]
Hence we get the following instances of the triple product on $A=K(U,U)\oplus U$:
\[
\begin{split}
\{ x_1,x_2,M\}&=K\bigl(Mx_1,x_2\bigr),\\
\{x_1,x_2,x_3\}&=x_1x_2x_3,
\end{split}
\]
for any $x_1,x_2,x_3\in U$ and $M\in K(U,U)$.

\smallskip

Let us summarize the previous arguments in our last result, which attaches a dicyclic ternary algebra to an arbitrary $(1,1)$ Freudenthal-Kantor triple system:

\begin{theorem}\label{th:11FKTSdicyclic}
Let $(U,xyz)$ be a $(1,1)$ Freudenthal-Kantor triple system. Then $\bigl(K(U,U)\oplus U,\bar{\ },*,\{.,.,.\}\bigr)$ is a dicyclic ternary algebra, where
\[
\begin{split}
&\bar M=M,\quad \bar x=-x,\\[2pt]
&M_1*M_2=0,\quad M*x=Mx,\quad x_1*x_2=-K(x_1,x_2),\\[2pt]
&\{M_1,x_1,M_2\}=\{M_1,x_1,x_2\}=\{ x_1,M_1,M_2\}=\{ x_1,M_1,x_2\}=0,\\[2pt]
&\{ M_1,M_2,M_3\}=-\bigl(M_1M_2M_3+M_3M_2M_1\bigr),
   \quad\{M_1,M_2,x\}=M_2M_1x,\\
&\{ x_1,x_2,M\}=K\bigl(Mx_1,x_2\bigr),\quad \{x_1,x_2,x_3\}=x_1x_2x_3,
\end{split}
\]
for any $x,x_1,x_2,x_3\in U$ and $M,M_1,M_2,M_3\in K(U,U)$. \qed
\end{theorem}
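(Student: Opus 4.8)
The plan is to derive the statement as a direct application of Theorem~\ref{th:dicyclic} to the $5$-graded Lie algebra $\frg(U)=\cL\oplus\cT$ of \eqref{eq:gU}, specialized to $\epsilon=\delta=1$ (for $\delta=1$ this $\frg(U)$ is a genuine Lie algebra, by the construction of Yamaguti and Ono \cite{YO84}). First I would note that the grading automorphism $\phi=\phi_\omega$ (which acts as $\omega^i$ on $\frg(U)_{(i)}$) and the order-$4$ automorphism $\theta$ described above generate a copy of $\Dic_3$: $\phi^3=1$ because $\omega^3=1$, $\theta^4=1$ was already observed, and $\phi\theta\phi=\theta$ because $\phi$ acts as $\omega^i$ on $\frg(U)_{(i)}$ while $\theta$ interchanges $\frg(U)_{(i)}$ and $\frg(U)_{(-i)}$ for every $i$. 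Hence $\Dic_3$ acts by automorphisms on $\frg(U)$, and $\frg(U)_\omega=\{X:\phi(X)=\omega X\}=\frg(U)_{(-2)}\oplus\frg(U)_{(1)}$.

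Theorem~\ref{th:dicyclic} then yields at once that $\frg(U)_\omega$ is a dicyclic ternary algebra under the operations $\bar X=\theta^2(X)$, $X*Y=\theta^{-1}([X,Y])$, $\{X,Y,Z\}=[[X,\theta(Y)],Z]$ of \eqref{eq:barsquaretriple}; in particular axioms \textbf{(D1)}--\textbf{(D5)} need no separate verification. It remains only to transport these operations along the identifications $x\leftrightarrow\begin{pmatrix}x\\0\end{pmatrix}$ and $K(a,b)\leftrightarrow\begin{pmatrix}0&0\\K(a,b)&0\end{pmatrix}$, so that $A_0=\frg(U)_{(-2)}\simeq K(U,U)$ and $A_1=\frg(U)_{(1)}\simeq U$. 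Since $\theta^2$ is the identity on $\cL$ and equals $-1$ on $\cT$, this gives $\bar M=M$ and $\bar x=-x$, as stated.

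The remaining products are then read off from the bracket of $\frg(U)$. The vanishing relations are forced by the $\bZ$-grading: $M_1*M_2=0$ since $[\frg(U)_{(-2)},\frg(U)_{(-2)}]=0$, and $\{M_1,x_1,\cdot\}=\{x_1,M_1,\cdot\}=0$ because $\theta$ sends $\frg(U)_{(-2)}$ to $\frg(U)_{(2)}$ and $\frg(U)_{(1)}$ to $\frg(U)_{(-1)}$, while $[\frg(U)_{(1)},\frg(U)_{(2)}]=0=[\frg(U)_{(-2)},\frg(U)_{(-1)}]$. The surviving products are exactly the matrix computations displayed just before the statement: $\theta^{-1}$ of the appropriate bracket gives $M*x=Mx$ and $x_1*x_2=-K(x_1,x_2)$, while bracketing the relevant matrices produces the three nonzero ternary products. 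Two steps there need the Freudenthal--Kantor axioms rather than bare matrix multiplication: first, seeing that the bracket of $\begin{pmatrix}0&0\\K(a_1,b_1)&0\end{pmatrix}$ with $\theta\bigl(\begin{pmatrix}0&0\\K(a_2,b_2)&0\end{pmatrix}\bigr)$, namely $\begin{pmatrix}K(a_2,b_2)K(a_1,b_1)&0\\0&-K(a_1,b_1)K(a_2,b_2)\end{pmatrix}$, actually lies in $\frg(U)_{(0)}$, which follows from the Yamaguti--Ono identities \eqref{eq:KKLK.}--\eqref{eq:KKL.K} of Lemma~\ref{le:identitiesedFKTS}; and second, identifying $\{x_1,x_2,M\}$ with $K\bigl(Mx_1,x_2\bigr)$, where \eqref{eq:FK2} is used. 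Finally $\{x_1,x_2,x_3\}$ comes out as $L(x_1,x_2)x_3=x_1x_2x_3$.

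The main obstacle is bookkeeping rather than conceptual depth: one has to keep precise track of where $\theta$ moves each graded component and to invoke Lemma~\ref{le:identitiesedFKTS} and \eqref{eq:FK2} at exactly the right steps so that each computed product visibly lands in the claimed summand ($K(U,U)$ or $U$). Once this is in place, the dicyclic ternary algebra structure---including all of \textbf{(D1)}--\textbf{(D5)}---is guaranteed by Theorem~\ref{th:dicyclic}, with nothing further to prove.
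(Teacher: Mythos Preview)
Your proposal is correct and follows essentially the same approach as the paper: the paper also derives the result by applying Theorem~\ref{th:dicyclic} to the $5$-graded Lie algebra $\frg(U)$ with the $\Dic_3$-action generated by $\phi=\phi_\omega$ and $\theta$, then reads off the operations on $\frg(U)_\omega=\frg(U)_{(-2)}\oplus\frg(U)_{(1)}$ via the same identifications and the same matrix computations, invoking Lemma~\ref{le:identitiesedFKTS} and \eqref{eq:FK2} at precisely the points you indicate.
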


\bigskip

\end{document}